\renewcommand{\theenumi}{\arabic{enumi})} 
\renewcommand{\labelenumi}{\theenumi}
\let\origenumerate\enumerate
\def\enumerate{\origenumerate\itemsep0pt}
\let\origitemize\itemize
\def\itemize{\origitemize\itemsep0pt}
\newtheorem*{beisp}{Example}
\newtheorem{theorem}{Theorem}
\newtheorem{proposition}{Proposition}
\newtheorem{lemma}{Lemma}
\newtheorem{corollary}{Corollary}
\newtheorem{remark}{Remark}
\newtheorem{claim}{Claim}
\newtheorem{definition}{Definition}
\newtheorem{convention}{Convention}
\newenvironment{customthm}[1]
  {\innercustomthm}
  {\endinnercustomthm}
\newcommand{\Sym} {\mathrm{S}}
\newcommand{\pmm} {{+\slash -}}
\newcommand{\mmp} {{-\slash +}}
\newcommand{\conj}  {\mathrm{Conj}}
\newcommand{\len}[1]   {\ci(#1)}
\newcommand{\lenhhh}[1]   {\cn(#1)}
\newcommand{\lent}[1]   {\mathfrak{L}_{#1}}
\newcommand{\homeo}   {\boldsymbol{\upsilon}}
\newcommand{\overlap}   {\mathcal{G}}
\newcommand{\sign}   {\mathrm{sign}}
\newcommand{\defff} {{\it iff }} 
\newcommand{\e} {\mathrm{e}}
\newcommand{\I} {\mathrm{i}}
\newcommand{\ID} {\mathcal{D}}
\newcommand{\UE} {\mathrm{U(1)}}
\newcommand{\DOM} {\mathcal{B}}
\newcommand{\id} {\mathrm{id}}
\newcommand{\MFree} {\mathcal{M}}
\newcommand{\Free} {\mathcal{F}}
\newcommand{\Seg} {\mathcal{S}}
\newcommand{\he} {\hspace{1pt}}
\newcommand{\innt} {\mathrm{int}}
\newcommand{\innnt} {\mathrm{Int}}
\newcommand{\clos} {\mathrm{cls}}
\newcommand{\GGG} {\mathcal{H}}
\newcommand{\GES} {\GGG(S)}
\newcommand{\GESS} {\GGG(S')}
\newcommand{\CM} {C}
\newcommand{\im} {\mathrm{im}}
\newcommand{\dom} {\mathrm{dom}}
\newcommand{\dd} {\mathrm{d}}
\newcommand{\wt}[1] {\widetilde{#1}} 
\newcommand{\ovl}[1]{\overline{#1}} 
\newcommand{\A} {\mathcal{A}}
\newcommand{\KK} {\mathcal{K}}
\newcommand{\LL} {\mathcal{L}}
\newcommand{\OO} {\mathcal{O}}
\newcommand{\UU} {\mathcal{U}}
\newcommand{\RR} {\mathbb{R}}
\newcommand{\CC} {\mathbb{C}}
\newcommand{\ZZ} {\mathbb{Z}}
\newcommand{\CN} {\mathfrak{N}}
\newcommand{\cn} {\mathfrak{n}}
\newcommand{\hcn} {\hat{\mathfrak{n}}}
\newcommand{\ci} {\mathrm{m}}
\newcommand{\CI} {\mathrm{M}_\Sigma}
\newcommand{\ocn} {\ovl{\mathfrak{n}}}
\newcommand{\NN} {\mathbb{N}}
\newcommand{\cnN} {\cn_0}
\newcommand{\cnK} {\cn_{\mathsmaller{<}}}
\newcommand{\hcnN} {\hat{\cn}_0}
\newcommand{\hcnK} {\hat{\cn}_{\mathsmaller{<}}}
\newcommand{\setm} {\backslash}
\newcommand{\cp} {\circ}
\newcommand{\wm} {\varphi}
\newcommand{\g} {{\vec{g}}}
\newcommand{\mg} {\mathfrak{g}}
\newcommand{\SD} {\mathcal{S}}
\newcommand{\SO} {\mathrm{SO}}
\newcommand{\cpsim} {\sim_\cp}
\newcommand{\EE} {\mathrm{E}}
\newcommand{\MK}{{\mathfrak{D}}}
\newcommand{\ML}{{\mathfrak{C}}}
\begin{document}
\title{Decompositions of Analytic 1-Manifolds}
\author{Maximilian Hanusch\thanks{e-mail:
    {\tt mhanusch@math.upb.de}}\\   
  \\
  {\normalsize\em Department of Physics}\\[-0.15ex]
  {\normalsize\em Florida Atlantic University}\\[-0.15ex]
  {\normalsize\em 777 Glades Road}\\[-0.15ex]
  {\normalsize\em FL 33431 Boca Raton}\\[-0.15ex]
  {\normalsize\em USA}}   
\date{October 17, 2022}
\maketitle

\begin{abstract} 
In \cite{Deco} analytic 1-submanifolds had been classified w.r.t.\ their symmetry under a given regular and separately analytic Lie group action on an analytic manifold. It was shown that such an analytic 1-submanifold is either free or   (via the exponential map) analytically diffeomorphic to the unit circle or an interval. 
In this paper, we show that each free analytic 1-submanifold is discretely generated by the symmetry group, i.e., naturally decomposes into countably many symmetry free segments that are mutually and uniquely related by the Lie group action. 
This is shown under the same assumptions that were used in \cite{Deco} to prove analogous  decomposition results for  analytic immersive curves. 
Together with the results obtained in \cite{Deco}, this completely classifies 1-dimensional analytic objects (analytic curves and analytic 1-submanifolds) w.r.t.\ their symmetry under a given regular  and separately analytic Lie group action.
\end{abstract}  
\tableofcontents

\section{Introduction}
\label{intro} 
Curves and 1-submanifolds are the key mathematical objects of both  loop quantum gravity \cite{BackLA, Thiemann} and string theory \cite{Polch}, two of the major current approaches to quantum gravity.   
Therefore, in the context of a symmetry reduction of these two theories,  a detailed analysis of the symmetry properties of such 1-dimensional objects should be done. 
For instance, in loop quantum gravity (where the analytic category is used) symmetries of embedded analytic curves  
have turned out to be essential for the investigation of quantum-reduced configuration spaces \cite{InvConLQG, MAX}. The reason is that \cite{Deco} under comparatively mild assumptions on the symmetry action (separately analytic and regular), a given embedded analytic curve is either  
\begingroup
\setlength{\leftmargini}{10pt}
\begin{itemize} 
\item
a free segment (symmetry free),
\item
free, and then 
completely determined by its holonomy along a symmetry free building block,
\item
exponential, and then explicitly given in terms of the exponential map, see Lemma 5.6.1 in \cite{MAX}.
\end{itemize}
\endgroup 
\noindent
Simply put, the statement in the second point is due to a decomposition result for free analytic immersive curves (that holds even under milder assumptions on the Lie group action than regularity) that essentially states that each such curve is build up countably many translates of a symmetry free building block. 
In this paper, we prove an analogous decomposition result for analytic 1-submanifolds that, together with the classification result obtained in\footnote{In the following, we refer to the  arXiv-version of \cite{Deco}; which is identical in content to the published version, but more detailed with improved presentation. Specifically, Theorem 4/Theorem 5 in the arXiv-version corresponds to Corollary 18/Theorem 4  in the published version. Moreover,  in the arXiv-version (for convenience of the reader), an explicit proof of the fact had been added that the two definitions for a free 1-submanifold used in \cite{Deco} are indeed equivalent (see Lemma I in Sect.\ 5.3 in \cite{Deco}).} \cite{Deco} (Theorem 5 in \cite{Deco}),  completely classifies analytic 1-submanifolds w.r.t.\ their symmetry properties under a given regular and separately analytic Lie group action. 

More specifically, let $M$ be a fixed analytic manifold (finite-dimensional, without boundary, and not necessarily second countable). 
An analytic 1-submanifold 
$(S,\iota)$ of $M$ is a   
 connected,  Hausdorff, second countable $1$-dimensional analytic manifold with boundary $S$, together  
with an injective analytic immersion $\iota\colon S\rightarrow M$.  A left action $\wm\colon G\times M\rightarrow M$  of a Lie group $G$ on  $M$ is said to be 
\begingroup
\setlength{\leftmargini}{10pt}
\begin{itemize}
\item[-] 
analytic in $G$\hspace{2.6pt}\qquad\quad \textbf{\defff}\:$\wm_x\colon G\ni g\mapsto \wm(g,x)\in M$ is analytic for each $x\in M$. 
\item[-]
analytic in $M$\hspace{1.9pt}\qquad\:\: \textbf{\defff}\:$\wm_g\colon M\ni x\mapsto \wm(g,x)\in M$ is analytic for each $g\in G$.
\item[-]
separately analytic\:\: \textbf{\defff}\:$\wm$ is analytic in $G$ and $M$. 
\item[-]
sated\hspace{2pt}\qquad\qquad\qquad\: \textbf{\defff}\:to each  $C\subseteq M$ with $|C|\geq 2$ and each $x\in M\setm C$, there exists a neighbourhood $U$
\vspace{-14.5pt}

\hspace{107.5pt}of $x$ with $g\cdot C\subsetneq U$ for all $g\in G$.
\item[-]
non-contractive\hspace{2.6pt}\qquad \textbf{\defff}\:$\wm$ is sated as well as analytic in $M$.
\item[-]
regular\hspace{4pt}\qquad\qquad\quad\: \textbf{\defff}\:$\wm$ is sated  as well as stable.\footnote{We omit the rather technical definition of stability at this point, because we do not explicitly use it in this paper -- It can be found in Definition 1 in \cite{Deco}.} 
\end{itemize}
\endgroup
\noindent
We always assume that $\wm$ is continuous in $G$, i.e., that $\wm_x\colon G\ni g\mapsto \wm(g,x)$ is continuous for each $x\in M$. 
Notably, $\wm$ is sated if one of the following conditions are fulfilled (see Remark 1 in \cite{Deco}):
\begingroup
\setlength{\leftmargini}{10pt}
\begin{itemize} 
\item
There exists a $G$-invariant continuous metric on $M$.
\item
The map $\wm_x\colon G\ni g\mapsto \wm(g,x)\in M$ is proper for each $x\in M$.
\item
$M$ is a topological group, with 
$\wm(g,x)=\phi(g)\cdot x$ for all $g\in G$ and $x\in M$, 
for a (necessarily continuous) group homomorphism $\phi\colon G\rightarrow M$.\footnote{Actually, it suffices to require that $(M,*)$ is a group such that $*\colon M\times M\rightarrow M$ is continuous in the first argument (i.e., $M\ni x\mapsto x*y\in M$ is continuous for each fixed $y\in M$).}
\end{itemize}
\endgroup
\noindent
It was shown in \cite{Deco} that if 
$\wm$ is separately analytic and regular, then an analytic curve in $M$ (Theorem 2) or an analytic 1-submanifold of $M$ (Theorem 5) is either exponential or free:
\begingroup
\setlength{\leftmargini}{10pt}
\begin{itemize} 
\item
An analytic curve $\gamma\colon D\rightarrow M$ is said to be 
\vspace{-4pt}
\begingroup
\setlength{\leftmarginii}{10pt}
\begin{itemize}
\item[$\cp$]
exponential \hspace{1pt}\:\textbf{\defff} $\gamma= \exp(\rho\cdot \g)\cdot x$ holds, for some $x\in M$, $\g\in \mg$, and an analytic map $\rho\colon D\rightarrow \RR$. 
\item[$\cp$]
free\hspace{37.7pt}\:\hspace{1pt}\textbf{\defff} it admits a free segment -- i.e., an immersive  subcurve $\delta\equiv \gamma|_{D'}$ such that 
\begin{align*}
	\textstyle\qquad\qquad\qquad\quad\: g\cdot \delta(J) =\delta(J')\qquad\Longleftrightarrow\qquad g\cdot \delta=\delta \qquad\big(\stackrel{\text{analyticity}}{\Longleftrightarrow}\quad g\in G_\gamma:=\bigcap_{t\in D}G_{\gamma(t)}\big) 
\end{align*}
\vspace{-16pt}
\par
\begingroup
\leftskip=2.68cm 
\noindent
with $g\in G$ as well as $J,J'\subseteq D'$ non-empty open intervals  
on which $\delta$ is an embedding.
\par
\endgroup
\end{itemize}
\endgroup
{\it Notably, if $\gamma$ is non-constant exponential, then (under the given assumptions on $\wm$) $\g\in \mg\backslash  \mg_x$ is unique up to rescaling by $\lambda\in \RR_{\neq 0}$ and addition of elements in the Lie algebra $\mg_\gamma$ of the stabilizer $G_\gamma$ of $\gamma$.}
\item
An analytic 1-submanifold $(S,\iota)$ of $M$ is said to be 
\vspace{-4pt}
\begingroup
\setlength{\leftmarginii}{10pt}
\begin{itemize}
\item[$\cp$]
exponential \hspace{1pt}\:\textbf{\defff} it is either 
analytically diffeomorphic to $U(1)$ or to a nondegenerate interval $\ID\subseteq \RR$ via 
\begin{align}
\label{kjkfdkldf}
	\qquad\qquad\qquad\UE\ni \e^{\I \phi}\mapsto \iota^{-1}(\exp(\phi\cdot \g)\cdot x)\in S\qquad\text{or}\qquad \ID\ni t\mapsto \iota^{-1}(\exp(t\cdot \g)\cdot x)\in S,\tag{$\natural$}\\[-19.5pt]\nonumber
\end{align}
\par
\begingroup
\leftskip=2.68cm 
\noindent
respectively, for some $\g\in \mg\backslash \mg_x$. \hspace*{\fill}(See Proposition 8.2 in \cite{Deco}.) 
\par
\endgroup
\item[$\cp$]
free\hspace{37.7pt}\:\hspace{1pt}\textbf{\defff} it admits a free segment -- i.e., a non-singleton connected subset $\emptyset\neq\Sigma\subseteq S$ such that 
\vspace{-3pt}
\begin{align*}
	\textstyle\qquad\qquad\qquad\qquad\hspace{5.5pt} g\cdot \iota(\OO) = \iota(\OO')\qquad\Longleftrightarrow\qquad g\cdot \iota|_\Sigma=\iota|_\Sigma\qquad\big(\stackrel{\text{analyticity}}{\Longleftrightarrow}\quad g\in G_S:=\bigcap_{z\in S}G_{\iota(z)}\big)\quad
\end{align*}
\vspace{-16pt}
\par
\begingroup
\leftskip=2.68cm 
\noindent
with $g\in G$ as well as $\OO,\OO'\subseteq \Sigma$ non-empty connected open subsets of $S$   
on which $\iota$ is an embedding. \hspace*{\fill}(See Lemma I in \cite{Deco}.) 
\par
\endgroup
\end{itemize}
\endgroup
{\it Notably, if $(S,\iota)$ is exponential, then (under the given assumptions on $\wm$, and for $\ID$ fixed in the second case in \eqref{kjkfdkldf}) $\g\in \mg\backslash  \mg_x$ is unique up to addition of elements in  the Lie algebra $\mg_S$ of the stabilizer $G_S$ of $S$.}
\end{itemize}
\endgroup
\noindent
It was furthermore shown in \cite{Deco} that if $\wm$ is non-contractive, then each free analytic immersive curve naturally decomposes into symmetry free subcurves that are mutually and uniquely related to each other by the Lie group  action 
(see Theorem 3 and Theorem 4, Definition 1, and Convention 3 in \cite{Deco}).
In this paper, we prove an analogous result for  analytic 1-submanifolds. Specifically, we show the following statement (see the more comprehensive Theorem \ref{dfsofgofg}):
\begin{customthm}{A}\label{kkdskldslkdskllkdskds}
	Assume that $\wm$ is non-contractive; and let $(S,\iota)$ be free, but not a free segment by itself. Then, $S$ either admits a unique $z$-decomposition or a compact maximal free segment that is contained in the interior $\innt[S]$ of $S$. The first case cannot occur if $S$ is compact without boundary; and, in the second case, $S$ is either positive or negative, and admits a unique $\Sigma$-decomposition for each (necessarily compact) maximal segment $\Sigma\subset \innt[S]$.  
\end{customthm} 
\noindent
To explain these statements in more detail, let us first recall \cite{Gale} that $S$ is either homeomorphic to $U(1)$ (we write $S \cong \UE$) or to some nondegenerate interval $\ID\subseteq \RR$ (we write $S\cong\ID$). Hence, $S$ is compact without boundary \defff $S\cong U(1)$ holds. We need the following definitions:
\begingroup
\setlength{\leftmargini}{10pt}
\begin{itemize} 
\item
Let $\Seg(S)$ denote the set of all segments, i.e., all non-empty, non-singleton, and connected subsets $\Sigma\subseteq S$. 
\item
Write $g\cdot \iota|_\Sigma\cpsim \iota|_{\Sigma'}$ for $g\in G$ and $\Sigma,\Sigma'\in \Seg(S)$ \defff $g\cdot \iota(\OO)=\iota(\OO')$ holds for open segments $\OO\subseteq \Sigma$, $\OO'\subseteq\Sigma'$  such that $\iota|_\OO,\iota|_{\OO'}$ are embeddings. 
\item
Let $\Free(S)\subseteq \Seg(S)$ denote the set of all 
free segments, i.e., all 
$\Sigma\in \Seg(S)$ such that 
\begin{align*}
	\textstyle g\cdot \iota|_\Sigma\cpsim \iota|_\Sigma
	\quad\text{for}\quad 
	g\in G\qquad\quad\Longleftrightarrow\qquad\quad 
	g\in G_S.
\end{align*}
\vspace{-15pt}
\item
Let  
 $\MFree(S)\subseteq \Free(S)$ denote the set of all 
maximal free segments, i.e., all 
$\Sigma\in \Free(S)$ such that $\Sigma\subseteq \Sigma'\in \Free(S)$ implies $\Sigma=\Sigma'$. 
\item
A boundary segment of a segment $\Sigma\in \Seg(S)$ is a segment $\Seg(S)\ni \Sigma'\subseteq \Sigma$ such that $\Sigma\setm\Sigma'$ is connected.
\end{itemize}
\endgroup
\noindent
Assume now first that $\Free(S)\not\ni S\cong \ID$ holds ($S$ is homeomorphic to an interval and not a free segment by itself) and let $z\in \innt[S]$ be fixed. A $z$-decomposition of $S$ is a class $[e]\neq  [g]\in \GES:= G\slash G_S$ with $g\in G_{\iota(z)}$, such that the following two conditions are fulfilled:
\begingroup
\setlength{\leftmargini}{10pt}
\begin{itemize} 
\item[$\circ$]
There exist compact segments $\KK_\pm\subseteq S$ with $g\cdot \iota(\KK_-)=\iota(\KK_+)$ and $\KK_-\cap\KK_+=\{z\}$. 
\item[$\circ$]
$\Sigma_\pm\in \Free(S)$ holds for the unique boundary segments $\Sigma_\pm\in \Seg(S)$  of $S$ with
\begin{align*}
 S=\Sigma_-\cup \Sigma_+,\qquad \Sigma_-\cap\Sigma_+=\{z\}\qquad\text{and}\qquad
  \KK_\pm\subseteq \Sigma_\pm.
\end{align*}
\vspace{-20pt}
\end{itemize}
\endgroup
\noindent
Then, we have $\MFree(S)=\{\Sigma_-,\Sigma_+\}$ (see Lemma \ref{gfhhgh}); and, exactly one of the following two situations holds (see Lemma \ref{dfdffdfddfdf}):
\begin{align*}
(\iota|_{\Sigma_+})^{-1}\cp(g\cdot \iota|_{C_-})\colon \:\:\:\Sigma_-\supset C_-&\rightarrow \Sigma_+\:\:\:\:\hspace{27.2pt}\text{is an analytic diffeomorphism};\:\:\:\text{hence},
\:\:\: g\cdot \iota(C_-)=\iota(\Sigma_+)\phantom{,}
\\
(\iota|_{C_+})^{-1}\cp(g\cdot \iota|_{\Sigma_-})\colon\hspace{33.9pt} \Sigma_-&\rightarrow C_+\subseteq \Sigma_+\:\:\:\:\text{is an analytic diffeomorphism};\:\:\:\text{hence},
\:\:\: g\cdot \iota(\Sigma_-)=\iota(C_+)
\end{align*}
for segments $C_\pm\in \Free(S)$ with $\KK_\pm\subseteq C_\pm\subseteq \Sigma_\pm$. In addition to that, we have the implication 
\begin{align}
\label{sjkdajkdiudsaisdahds}
g'\cdot \iota|_{\Sigma_-}\cpsim\iota\quad\text{for}\quad g'\in G\qquad\quad\Longrightarrow\qquad\quad [g']\in \{[e],[g]\},\tag{$\flat$}
\end{align}
so that a proper translate of $\iota(\Sigma_-)$ can overlap $\iota(S)$ in exactly one way. 
 The corresponding uniqueness statement in Theorem \ref{kkdskldslkdskllkdskds} then means that both the point $z\in S$ and the class $[g]\in \GES$ are uniquely determined. 
\begin{beisp}
Let $\wm\colon \SO(2)\times \RR^2\rightarrow \RR^2$ be the canonical action of $G\equiv \SO(2)$ on $M\equiv\RR^2$. Let furthermore $S\equiv I$ be an open interval containing $0$, and let $\iota\colon I\ni t \mapsto (t,t^3)\in \RR^2$.
Then, $S$ admits the $0$-decomposition $[R(\pi)]$ with  $R(\pi)$ the rotation by the angle $\pi$, whereby $\Sigma_-=I\cap (-\infty,0]$ and $\Sigma_+=I\cap [0,\infty)$ holds. 
Here, we can choose $\KK_-:=[-\varepsilon,0]$ and $\KK_+:=[0,\varepsilon]$ for each $\varepsilon>0$ with $\pm\varepsilon \in I$.
\end{beisp}
\noindent
Next, assume that $S\notin\Free(S)$ admits a compact maximal segment $\MFree(S)\ni\Sigma\subset\innt[S]$: 
\begingroup
\setlength{\leftmargini}{15pt}
{
\renewcommand{\theenumi}{{\bf\arabic{enumi})}} 
\renewcommand{\labelenumi}{\theenumi}
\begin{enumerate}
\item
\label{asasaasfdj1}
If $S\cong\UE$ holds ($S$ is compact without boundary), then by a $\Sigma$-decomposition of $S$ (see Definition \ref{gfgfgf}) we understand a (finite) collection of compact maximal segments $\Sigma_0,\dots,\Sigma_n\in \MFree(S)$ as well as classes $[g_0],\dots,[g_n]\in \GES$  with $n\geq 1$, such that the following conditions are fulfilled:
\begingroup
\setlength{\leftmarginii}{10pt}
\begin{itemize}
\item
$\Sigma_0=\Sigma$ and $S=\bigcup_{k=0}^n\Sigma_k$, as well as $g_k\cdot \iota(\Sigma_0)=\iota(\Sigma_k)$ for $k=0,\dots,n$.
\vspace{1pt}
\item
If $n=1$, then $\Sigma_0\cap\Sigma_1=\{z_+,z_-\}$ consists of the two boundary points of $\Sigma_0$.
\vspace{1pt}
\item
If $n\geq 2$, then $\Sigma_p\cap \Sigma_q$ is singleton for $|p-q|\in \{1,n\}$, and empty for $2\leq |p-q|\leq n-1$.
\end{itemize}
\endgroup
\noindent 
It follows that the classes $[g_0],\dots,[g_n]$ are mutually different, with $[g_0]=[e]$ as well as $[g_1]=[g_1^{-1}]$ if $n=1$. 
Moreover, in analogy\footnote{Notably,  since $S\cong \UE$ holds, Lemma \ref{compnobound} shows that $g\cdot \iota|_{\Sigma_0}\cpsim \iota$ for $\Sigma_0\in \Seg(S)$, already implies $g\cdot \iota(S)=\iota(S)$ as well as $g\cdot \iota(\Sigma_0)=\iota(\Sigma')$ for $\Seg(S)\ni \Sigma' :=\iota^{-1}(g\cdot \iota(\Sigma_0))$.}  to  \eqref{sjkdajkdiudsaisdahds}, for $\Sigma'\in \Seg(S)$ and $g\in G$, we have 
the implication (Lemma \ref{asdhlkdsajkd}):
\begin{align*}
 g\cdot \iota(\Sigma_0)=\iota(\Sigma')\qquad\Longrightarrow\qquad [g]=[g_k]\:\:\text{ as well as }\:\:\Sigma'=\Sigma_k\:\:\text{ holds for }\:\: k\in \{0,\dots,n\}\:\:\text{ unique}.
\end{align*} 
In particular, for $n=1$, there exists no other $\Sigma$-decomposition of $S$; and, for $n\geq 2$, the only other $\Sigma$-decomposition of $S$ is given by 
\begin{align}
\label{nmdsnmdkjdskjdskjkjdsiuds98ds98ds98ds98ds1cxccx}
	\ovl{\Sigma}_{k}:=\Sigma_{\zeta(k)}\qquad\text{and}\qquad [\ovl{g}_k]:=[g_{\zeta(k)}]\qquad\quad\forall\: 0\leq k\leq n\tag{$\star$}
\end{align}
with $\Sym_{n+1}\ni \zeta\colon \{0,\dots,n\}\rightarrow \{0,\dots,n\}$
defined by $\zeta(0):=0$ as well as $\zeta(k):=n-(k-1)$ for $k=1,\dots,n$. 

Finally, each $\Sigma'\in \MFree(\Sigma)$ is compact (and properly contained in $S$). Moreover, the number of segments that occur in a $\Sigma'$-decomposition of $S$ is the same (thus, equal to $n$) for each $\Sigma'\in \MFree(\Sigma)$ (see Lemma \ref{ghdgddgf}). 
\item
\label{asasaasfdj2}
If $S$ is homeomorphic to an interval, then by a $\Sigma$-decomposition of $S$ (see Definition \ref{deco}), we understand  
 a pair $(\{\Sigma_n\}_{n\in \cn},\{[g_n]\}_{n\in \cn})$, with 
\begin{align*}
\cn=\{n\in \ZZ\: | \: \cn_{-} \leq n \leq \cn_+\}\qquad\text{for certain}\qquad \cn_-,\cn_+ \in \ZZ_{\neq 0}\cup \{-\infty,\infty\}, 
\end{align*}
that consists of classes 
$\{[g_n]\}_{n\in \cn}\subseteq \GES$ as well as segments $\{\Sigma_n\}_{n\in \cn}\subseteq \Seg(S)$ on which $\iota$ is an embedding such that the following holds:
\begingroup
\setlength{\leftmarginii}{10pt}
\begin{itemize}
\item
$\Sigma_0=\Sigma$, and $\Sigma_{\cn_\pm}$ is a boundary segment of $S$ if $\cn_{\pm}\neq \pm\infty$ holds.
\item
$\Sigma_p\cap\Sigma_q$ is singleton for $|p-q|=1$, and empty for $|p-q|\geq 2$.
\item
$g_n\cdot \iota(\Sigma_0)=\iota(\Sigma_n)$ holds for all $\cn_-<n<\cn_+$, and we have
\begin{align*} 
	g_{\cn_{-}}\cdot \iota(\Sigma_{-})=\iota(\Sigma_{\cn_-})\quad\text{if}\quad \cn_-\neq -\infty\qquad\quad\text{as well as}\qquad\quad g_{\cn_{+}}\cdot \iota(\Sigma_{+})=\iota(\Sigma_{\cn_+})\quad\text{if}\quad \cn_+\neq \infty
\end{align*}
for certain boundary segments $\Sigma_\pm$ of $\Sigma_0$. 
\end{itemize}
\endgroup
\noindent
Then, (see Remark \ref{dfxhbghg}) $\Sigma_n$ is free for each $n\in \cn$, as well as compact maximal if 
$g_n\cdot \iota(\Sigma_0)=\iota(\Sigma_n)$ holds (e.g.\ if $\cn_-<n<\cn_+$). Additionally:
\begingroup
\setlength{\leftmarginii}{10pt}
\begin{itemize}
\item
$S=\bigcup_{n\in \cn}\Sigma_n$ holds, as well as $[g_0]=[e]$.
\vspace{2pt}
\item
$[g_m]=[g_n]$ for $m\neq n$, implies $-\infty<\cn_-<\cn_+<\infty$ and $m,n\in \{\cn_-,\cn_+\}$.
\vspace{2pt}
\item 
Let $g\in G$, and let  $\Seg(S)\ni\OO,\OO'\subseteq S$ be open such that $\iota|_\OO,\iota|_{\OO'}$ are embeddings, with $\OO\subseteq \Sigma_0$. Then, the following implication holds (Lemma \ref{dfggfgf}):
\begin{align*}
 (g\cdot \iota|_{\Sigma_0})(\OO)=\iota(\OO')\qquad\Longrightarrow\qquad [g]=[g_n]\:\:\text{ and }\:\:\OO'\subseteq\Sigma_n\:\:\text{ holds for }\:\: n\in \cn\:\:\text{ unique}.
\end{align*} 
\end{itemize}
\endgroup
\noindent
In particular, a $\Sigma$-decomposition is unique up to a reordering of the form (see Lemma \ref{dfggfgf}) 
\begin{align}
\label{nmdsnmdkjdskjdskjkjdsiuds98ds98ds98ds98ds1cxccxasassa0909}
(\{\ovl{\Sigma}_n\}_{n\in \ocn},\{[\ovl{g}_n]\}_{n\in \ocn})\quad\text{with}\quad \ovl{\Sigma}_n=\Sigma_{-n}\quad\text{and}\quad [\ovl{g}_n]=[g_{-n}]
\quad\text{for all}\quad n\in \ocn:=\{-n \: | \: n\in \cn\}.\tag{$\star\star$}
\end{align}
\end{enumerate}}
\endgroup
\noindent
It remains to explain the notions ``positive'' and ``negative'' in Theorem \ref{kkdskldslkdskllkdskds}. For this, let $\MFree(S)\ni \Sigma\subset S$ be compact maximal with boundary points $\{z_-,z_+\}$; and let  
$[e]\neq[g]\in \GES$ as well as $z\in \{z_-,z_+\}\cap \innt[S]$ be fixed.  
Assume that $g\cdot \iota(\Sigma_b)=\iota(\Sigma_z)$ holds, with $g\cdot \iota
(b)=\iota(z)$ for
\begingroup
\setlength{\leftmargini}{12pt}
\begin{itemize}
\item[$-$] 
 a boundary segment $\Sigma_b$ of $\Sigma$ with $b\in \Sigma_b\cap \{z_-,z_+\}$,\footnote{Hence, $b$ is a common boundary point of $\Sigma$ and $\Sigma_b\subseteq \Sigma$.}
\item[$-$] 
a segment 
$\Sigma_z$ with $\Sigma\cap\Sigma_z=\{z\}$.
\end{itemize}
\endgroup 
\noindent
Then, $\Sigma$ is said to be 
\begingroup
\setlength{\leftmargini}{10pt}
\begin{itemize}
\item
\hspace{2.4pt}positive if $g\notin G_{\iota(z)}$ holds; hence, if $g$ shifts $\iota(\Sigma_{b})$ such that $\iota(b)\neq \iota(z)$ is mapped to $\iota(z)$.  
\item
negative if $g\in G_{\iota(z)}$ holds; hence, if $g$ flips $\iota(\Sigma_{b})$ at $\iota(b)=\iota(z)$.
\end{itemize}
\endgroup
\noindent
It follows that $\Sigma$ is either positive or negative (Remark \ref{dsdsdsdskjdsjkdskjdskjdsds09w0909ew09we09ewewewewewewewcx}); and furthermore that (Lemma  \ref{kjdfdfjkdfjkldfjkldfjkldfjkldfjklfk}): 
\begingroup
\setlength{\leftmargini}{10pt}
\begin{itemize}
\item
If $S$ admits a positive segment $\Sigma\subset \innt[S]$, then  
each compact $\Sigma'\in \MFree(S)$ is positive:

$S$ is said to be positive in this case. 
\item
If $S$ admits a negative segment $\Sigma\subset \innt[S]$, then  
each compact $\Sigma'\in \MFree(S)$ is negative:

$S$ is said to be negative in this case.
\end{itemize}
\endgroup
\noindent
Let now $\MFree(S)\ni \Sigma\subset\innt[S]$ be compact maximal; so that we are in the situation of Point \ref{asasaasfdj1} ($S\cong\UE$) or in the situation of Point \ref{asasaasfdj2} ($S\cong\ID$):  
\begingroup
\setlength{\leftmargini}{13pt}
\begin{itemize}
\item[$\blacktriangleright$]
If $\Sigma$ is positive, then we have (see Proposition \ref{sadfpoifdsjk}) 
\begin{align*}
(S\cong\UE)\qquad\textbf{\ref{asasaasfdj1}}\colon\quad\:[g_k]&=[h^k]\quad\text{for}\quad k=0,\dots,n\qquad\text{for each fixed}\qquad h\in[g_1].\hspace{69.5pt}\\[3pt]
(S\cong \ID)\hspace{12.2pt}\qquad\textbf{\ref{asasaasfdj2}}\colon\quad\hspace{1.5pt}[g_n]&=[h^n]\quad\hspace{1pt}\forall\: n\in \cn\qquad\qquad\quad\hspace{11.3pt}\text{for each fixed}\qquad h\in[g_1].\hspace{55pt}
\end{align*}
The class $[g_1]$ is up to inversion\footnote{More specifically, this means that each such class either equals $[g_1]$ or $[g_1^{-1}]$, depending on which of the two possible $\Sigma'$-decompositions is considered (see \eqref{nmdsnmdkjdskjdskjkjdsiuds98ds98ds98ds98ds1cxccx} and \eqref{nmdsnmdkjdskjdskjkjdsiuds98ds98ds98ds98ds1cxccxasassa0909}). The details can be found in Sect.\  \ref{osdfuisfdauoifufsda}.} the same for each  $\Sigma'$-decomposition that corresponds to some compact maximal $\MFree(S)\ni \Sigma'\subset\innt[S]$. 
Moreover, the compact maximal (positive) segments are continuously distributed in $S$ (Proposition \ref{lkfdlkfdlkfdlkfdlk}); so that, e.g., each $z\in\innt[S]$ is contained in the interior of some positive segment $\MFree(S)\ni\tilde{\Sigma}\subset \innt[S]$ (Corollary \ref{fdshjfdf}). 
For instance:
\begingroup
\setlength{\leftmarginii}{15pt}
{
\renewcommand{\theenumi}{{\bf\arabic{enumi})}} 
\renewcommand{\labelenumi}{\theenumi}
\begin{enumerate}
\item
Let $M= S\equiv \UE$ (hence, $\iota=\id_{\UE}$), and let  $G\subseteq \UE$ be the discrete subgroup that is generated by the element $h\equiv \e^{\I 2\pi \slash n}$, just acting  via multiplication from the left on $M$. Then, $\Sigma:=\e^{\I [t,t+2\pi \slash n]}$ is positive for each fixed $t\in \RR$, 
and admits the obvious $\Sigma$-decomposition of $S$ with $[g_1]=[h]$.
\item
Let $G\equiv \RR$ act via $\wm(t,(x,y)):=(t+x,y)$ on $M\equiv \RR^2$; and let $S:=\RR$ as well as $\iota\colon  t\mapsto (t,\sin(t))$. Then, $S$ admits the positive segments $[t,t+2\pi]$ with $t\in \RR$, each of them   giving rise to a decomposition of $S$ with $[g_1]=[2\pi]$.  
\end{enumerate}}
\endgroup
\item[$\blacktriangleright$]
If $\Sigma$ is negative, then $\MFree(S)$ consists exactly of the segments  that occur in the (up to inversion unique) $\Sigma$-decomposition of $S$ (Lemma \ref{fhjfdsfdd}). Moreover, for $\sigma\colon \ZZ_{\neq 0}\rightarrow \{-1,1\}$ defined by 
\begin{align*}
\sigma(n):=
\begin{cases} 
	(-1)^{n-1} &\mbox{if}\quad n > 0 \\ 
	(-1)^n & \mbox{if}\quad n < 0,
\end{cases} 
\end{align*}
\vspace{-15pt}

we have (see \eqref{sdsdffghhh2} and \eqref{sdsdffghhh1}):
\begin{align*}
(S\cong\UE)\qquad\textbf{\ref{asasaasfdj1}}\colon\quad\:&[g_k]=[g_{\sigma(1)}\cdot {\dots}\cdot g_{\sigma(k)}]\qquad\qquad\hspace{18pt}\forall\: 1\leq k\leq n\quad\:\:\text{for}\quad\:\: g_{-1}:=g_n\\[3pt]
(S\cong\ID)\hspace{12.2pt}\qquad\textbf{\ref{asasaasfdj2}}\colon\quad\:&[g_n]=[g_{\sigma(\sign(n))}\cdot {\dots}\cdot g_{\sigma(n)}]\qquad\quad\hspace{6pt}\forall\: n\in \cn, 
\end{align*}
e.g.\ $[g_2]=[g_1\cdot g_{-1}]$ and $[g_3]=[g_1\cdot g_{-1}\cdot g_1]$. It follows that in the situation of \ref{asasaasfdj1} ($S\cong \UE$), the integer $n$ is necessarily odd (Corollary \ref{odsosdopds}). 
For instance:
\begingroup
\setlength{\leftmarginii}{15pt}
{
\renewcommand{\theenumi}{{\bf\arabic{enumi})}} 
\renewcommand{\labelenumi}{\theenumi}
\begin{enumerate}
\item
Let $S\equiv U(1)\subseteq M\equiv \RR^2\cong\CC$: 
\begingroup
\setlength{\leftmarginiii}{12pt}
\begin{itemize}
\item[$-$] 
Let  $G\subseteq \mathrm{O}(2)$ be the discrete group that is generated by the reflection at the $x_2$-axis. Then, $\Sigma_0=\e^{\I K_0}$ and $\Sigma_1=\e^{\I K_1}$ are negative, for $K_0=[-\pi/2,\pi/2]$ and $K_1=[\pi/2, 3\pi/2]$.  
\item[$-$]
Let $G\subseteq \mathrm{O}(2)$ be the discrete group that is generated by the reflections at the $x_1$- and the $x_2$-axis. Then, $\Sigma_p=\e^{\I K_p}$ is negative, for $K_p=[p\cdot \pi/2, (p+1)\cdot \pi/2]$ for $p=0,\dots,3$. 
\end{itemize}
\endgroup
In both cases, the above formula for the classes $[g_p]$ is easily verified.
\item
Let the euclidean group $G\equiv \RR^2 \rtimes \SO(2)$ act canonically on $M\equiv\RR^2$; and let $\iota\colon S\equiv\RR\ni t\mapsto (t,\sin(t))$. Then, $\Sigma=[0,\pi]$ is negative, with $[g_{-1}]$ and $[g_{1}]$ the classes of the rotations by the angle $\pi$ around $(0,0)$ and $(\pi,0)$, respectively.
\end{enumerate}}
\endgroup
\end{itemize}
\endgroup
\noindent
This paper is organized as follows. In Sect.\ \ref{gffggf}, we fix the notations and collect the basic facts and definitions that we need for our argumentation in Sect.\ \ref{oisdffiodusfdiu}. In Sect.\ \ref{nmnbvccnnvcncvnmcncv}, 
  we prove some elementary statements concerning compact maximal  segments that we need in Sect.\ \ref{kjdskjsdkjsdoidsoidsds98s98ds98ds9898ds09ds09ds09dssdds}  to show that each  compact $\MFree(S)\ni \Sigma\subset\innt[S]$ admits a $\Sigma$-decomposition of $S$ ($\wm$ non-contractive). In Sect.\ \ref{lkjkjlfdjfdkjfkjfdlkfdj} and Sect.\ \ref{jhdfdfdfkfjdjkdf},  we show that $S$ either admits a unique $z$-decomposition or a compact maximal segment, provided that 
  $S$ is not a free segment by itself ($\wm$ non-contractive).  
  Sect.\ \ref{asasaghhgfg} contains a detailed analysis of the positive and the negative case.

\section{Preliminaries}
\label{gffggf}
In this section, we fix the notations and provide some basic facts and definitions concerning connected analytic 1-manifolds with boundary as well as free segments.

\subsection{Conventions}
\label{kdskjsjkdcxccx}
Intervals  are always assumed to be nondegenerate (i.e.\ an interval is a non-empty, non-singleton, connected subsets $D\subseteq \RR$).  If we write $I,J$ or $K,L$ instead of $D$,  we always mean that $I,J$ are open, and that $K,L$ are compact.  
Domains of charts are assumed to be non-empty, open, and connected. 
Manifolds are assumed to be   
finite-dimensional,  
Hausdorff, and analytic. 
If $f\colon M\rightarrow N$ is a differentiable map between the (analytic) manifolds $M$ and $N$, then $\dd f\colon TM\rightarrow TN$ denotes the corresponding differential map between their tangent manifolds. A differentiable map $f\colon M\rightarrow N$ is said to be immersive \defff  the restriction $\dd_xf:=\dd f|_{T_xM}\colon T_xM\rightarrow T_{f(x)}N$ is injective for each $x\in M$. Moreover, $f$ is said to be an embedding \defff $f$ is injective, immersive, as well as a homeomorphism to $f(M)$ when equipped with the subspace topology. 
Unless explicitly stated otherwise, manifolds are assumed to be manifolds without boundary, and we do not assume second countability. Specifically, in what follows: 
\begingroup
\setlength{\leftmargini}{10pt}
\begin{itemize} 
\item
$M$ denotes a finite-dimensional, Hausdorff analytic manifold  without boundary (second countability is not assumed). The same conventions also hold for the manifold structures of Lie groups.
\item
$S,S'$ denote  connected,  Hausdorff, second countable $1$-dimensional analytic manifolds with boundary.
\end{itemize}
\endgroup
\noindent
If $Y\subseteq X$ is a subset of a topological space $X$, then  an accumulation point of $Y$ in $X$ is an element $x\in X$ such that there exists a net $\{y_\alpha\}_{\alpha\in I}\subseteq Y\backslash\{x\}$ with $\lim_\alpha y_\alpha=x$. 
By an accumulation point of a topological space $X$, we understand an accumulation point of $X$ in $X$.
\vspace{6pt}

\noindent
In the whole paper, $\wm\colon G\times M\rightarrow M$   denotes a left action of a Lie group $G$ on a manifold $M$ (hence, $G,M$ are analytic manifolds without boundary, and not necessarily second countable) that is continuous in $G$, i.e., $\wm_x\colon G\ni g\mapsto \wm(g,x)$ is continuous for each fixed $x\in M$. We say that $\wm$ is  
\begingroup
\setlength{\leftmargini}{10pt}
\begin{itemize} 
\item
{\bf analytic in $\boldsymbol{M}$} \defff for each $g\in G$, the map
$$\wm_g\colon M\rightarrow M,\qquad  x\mapsto \wm(g,x)$$ 
is analytic. Then, each $\wm_g$ is an analytic diffeomorphism with inverse $\wm_{g^{-1}}$. 
\item
\textbf{non-contractive} \defff $\wm$ is analytic in $M$ such that the following condition holds:
\begingroup
\setlength{\leftmarginii}{17pt}
{
\renewcommand{\theenumii}{{\Roman{enumii}})} 
\renewcommand{\labelenumii}{\theenumii}
\begin{enumerate}
\item[$(*)$]
\label{cond2}
To each $C\subseteq M$ with $|C|\geq 2$ and each $x\notin C$, there exists a neighbourhood $U$ of $x$ with $g\cdot C\nsubseteq U$ for all $g\in G$. 
\end{enumerate}}
\endgroup
\noindent 
\end{itemize}
\endgroup
\noindent
To simplify the notations, we usually write 
\begingroup
\setlength{\leftmargini}{10pt}
\begin{itemize} 
\item
$g\cdot x$ instead of $\wm(g,x)$, for $x\in M$ and $g\in G$.
\item
$g\cdot \iota$ instead of $\wm_g\cp \iota$, for  $g\in G$ and $\iota\colon Z\rightarrow M$ a map. 
\end{itemize}
\endgroup
\noindent
We denote the stabilizer of $x\in M$ by 
$G_x=\{g\in G\: | \: g\cdot x=x\}$, and set $G_z:=G_{\iota(z)}$ for each $z\in Z$.  Moreover, we define
\begin{align*}
\textstyle G_C:=\bigcap_{x\in C} G_x\quad\:\text{for}\quad\: \emptyset\neq C\subseteq M\qquad\quad\text{as well as}\qquad\quad G_Y:= G_{\iota(Y)}\quad\:\text{for}\quad\: \emptyset\neq Y\subseteq Z.
\end{align*}
The corresponding Lie algebras are denoted by $\mg_x,\mg_C,\mg_Y$, respectively.\footnote{Observe that $G_x,G_C,G_Y$ are Lie subgroups of $G$, as closed in $G$ by continuity of $\wm$ in $G$.} We set $\GES:=G\slash G_S$, and let $[g]$ denote the class of $g\in G$ in $\GES$.  
We furthermore denote $\conj_g\colon G\ni q\mapsto g\cdot q\cdot g^{-1}\in G$.

\subsection{The Inverse Function Theorem}
We now briefly recall some standard facts concerning real analytic mappings.
\begin{theorem}
\label{ofdpofdpfdpof}
Assume that $F\colon \RR^n\supseteq U\rightarrow \RR^n$ is analytic ($U$ open) such that $\dd_p F$ is an isomorphism for some $p\in U$. Then,  there exists an open neighbourhood $V$ of $p$ with $V\subseteq U$, and an open neighbourhood $W\subseteq \RR^n$ of $F(p)$ such that $(F|_V)|^W\colon V\rightarrow W$ is an analytic diffeomorphism.
\end{theorem}
\begin{proof}
	Confer, e.g., Theorem 2.5.1 in \cite{PRIM}.
\end{proof}
\begin{lemma}
\label{dfdfdfdfdf}
Let $m\geq 1$ and $\ell\geq 0$ be given; and assume that $f\colon  \RR^m\supseteq U \rightarrow \RR^{m+\ell}$ is analytic ($U$ open), with $\dd_x f$ injective for some $x\in U$. Then, there exist $V\subseteq U$ open with $x\in V$, $W\subseteq \RR^{m+\ell}$ open with $f(V)\subseteq  W$, and an analytic diffeomorphism $\alpha\colon W\rightarrow W'$ to an open subset $W'\subseteq \RR^{m+\ell}$, such that
\begin{align*}
	\qquad\qquad\qquad(\alpha\cp f|_V)(v)=(v,0)\in \underbrace{\RR^{m}\times \RR^\ell}_{\displaystyle\cong \RR^{m+\ell}}\qquad\quad \forall\: v\in V 
\end{align*}
\vspace{-17pt}

\noindent
holds (with $\RR^0:=\{0\}$ for the case $\ell=0$). 
\end{lemma}
\begin{proof}
This is a straightforward consequence of Theorem \ref{ofdpofdpfdpof} (see also Lemma 2 in \cite{Deco}).
\end{proof}
\begin{corollary}
\label{dfdsasasasassa}
Let $m\geq 1$, $\ell\geq 0$, $U\subseteq \RR^m$ open, $M$ an analytic manifold with $\dim[M]=m+\ell$, and $\iota\colon \RR^m\supseteq U \rightarrow M$ analytic with $\dd_x\iota$ injective for some fixed $x\in U$. Then, there exists an open neighbourhood $V\subseteq U$ of $x$ and an analytic chart $(O,\psi)$ of $M$ with $\iota(V)\subseteq O$, such that
\begin{align*}
	(\psi\cp \iota|_V)(v)=(v,0) \qquad\quad \forall\: v\in V 
\end{align*}  
holds; hence, $\iota|_V$ is an embedding.
\end{corollary}
\begin{proof}
This is straightfoward from Lemma \ref{dfdfdfdfdf} (see also  Corollary 1 in \cite{Deco}).
\end{proof}

\subsection{Analytic Curves}
A curve is a continuous map $\gamma\colon D\rightarrow X$ from an interval  $D$ to a topological space $X$. 
An extension of $\gamma$ is a curve $\wt{\gamma}\colon I\rightarrow X$ that is defined on an open interval $I$ with $D\subseteq I$ such that $\wt{\gamma}|_D=\gamma$ holds. 
If $M$ is an analytic manifold, then a curve $\gamma\colon D\rightarrow M$ is said to be
\begingroup
\setlength{\leftmargini}{12pt}
\begin{itemize}
\item
analytic \defff it admits an analytic extension.
\item
(analytic) immersive \defff it admits an (analytic) immersive extension.
\item
an analytic embedding \defff it admits an analytic immersive extension that is a homeomorphism onto its image equipped with the subspace topology inherited from $M$.
\end{itemize}
\endgroup
\noindent
Analogously, a continuous map (curve) $\wt{\rho}\colon I\rightarrow \RR$ is said to be an extension of the continuous map (curve) $\rho\colon  D\rightarrow \RR$ \defff $I\subseteq \RR$ is an open interval with $D\subseteq I$ and $\rho=\wt{\rho}|_D$. Then, $\rho\colon  D\rightarrow D'\subseteq \RR$ is said to be
\begingroup
\setlength{\leftmargini}{12pt}
\begin{itemize}
\item
analytic (immersive) \defff it admits an analytic (immersive) extension.
\item
an (analytic) diffeomorphism \defff it admits an extension which is an (analytic) diffeomorphism.
\end{itemize}
\endgroup
\noindent
We recall the following statements:
\begin{lemma}
     \label{dfdfdfdfdfd}
	 Let $\gamma\colon I\rightarrow M$, $\gamma'\colon I'\rightarrow M$ be analytic embeddings; and let $x$ be an accumulation point of $\im[\gamma]\cap \im[\gamma']$.\footnote{Hence, $x\in X:= \im[\gamma]\cap \im[\gamma']$ holds; and there exists a net $X\setminus\{x\}\supseteq \{x_\alpha\}_{\alpha\in I}\rightarrow x$ that converges w.r.t.\ the subspace topology on $X$  inherited from $M$.} Then, $\gamma(J)=\gamma'(J')$ holds for some open intervals $J\subseteq I$ and $J'\subseteq I'$ with $x\in  \gamma(J),\gamma'(J')$. 
\end{lemma}
\begin{proof}
	See, e.g., Lemma 5 in \cite{Deco}.
\end{proof} 
\begin{lemma}
	\label{yccxycyycxy}
   	Let $\gamma\colon D\rightarrow M$, $\gamma'\colon D'\rightarrow M$ be analytic embeddings with $\gamma(D)=\gamma'(D')$. Then, $\gamma=\gamma' \cp \rho$ holds for a  (necessarily unique) analytic diffeomorphism $\rho\colon D\rightarrow D'$; specifically,
$$
\rho=(\gamma'|^{\im[\gamma']})^{-1}\cp \gamma\colon D \rightarrow D'.
$$   	
\end{lemma}

\begin{proof}
	See, e.g., Lemma 6 in \cite{Deco}.
\end{proof}
\begin{corollary}
    \label{sfdsfdsfdfdhhju}
	 Let $\gamma\colon I\rightarrow M$, $\gamma'\colon I'\rightarrow M$ be analytic embeddings such that there exist sequences $I\setm \{t\}\supseteq \{t_n\}_{n\in \NN}\rightarrow t\in I$ and $I'\setm\{t'\}\supseteq \{t'_n\}_{n\in \NN}\rightarrow t'\in I'$ with $\gamma(t_n)=\gamma'(t_n')$ for all $n\in \NN$. Then, there exist open intervals $J\subseteq I$ and $J'\subseteq I'$ with $t\in J$ and $t'\in J'$ as well as a (necessarily) unique  analytic diffeomorphism $\rho\colon J'\rightarrow J$ with $\gamma'|_{J'}=\gamma\cp\rho$ and $\rho(t')=t$. In particular, $\gamma'(J')=\gamma(J)$ holds.
\end{corollary}
\begin{proof}
	This is clear from Lemma \ref{dfdfdfdfdfd} and Lemma \ref{yccxycyycxy}.
\end{proof}

\subsection{Analytic 1-Manifolds}
\label{ncxcxnmxnmnmccxxc}
By an \textbf{analytic 1-manifold}, we understand a   connected, Hausdorff, second countable 1-dimensional analytic manifold  with boundary  $S$. We will make the following assumptions on a given chart $(U,\psi)$ of $S$: 
\begingroup
\setlength{\leftmargini}{12pt}
\begin{itemize}
\item
$U$ contains at most one boundary point of $S$, 
\item
If 
 $z\in U$ is a boundary point of $S$, then  $\psi(U)=[0,i)\subseteq \mathbb{H}^1\equiv [0,\infty)$ holds, with $i>0$ and $\psi(z)=0$.
\end{itemize}
\endgroup
\noindent
We recall \cite{Gale} that $S$ is either homeomorphic to $\DOM\equiv \UE$ (we write $S\cong\UE$) or to an interval $\DOM\equiv \ID\subseteq \RR$ (we write $S\cong \ID$) via some fixed homeomorphism that we denote by
\begin{align}
\label{hjdshjdssdsddsds}
	\homeo\colon \DOM\rightarrow S. 
\end{align}
A continuous map $\varrho \colon S\rightarrow S'$ between the analytic 1-manifolds $S,S'$ is said to be
\begingroup
\setlength{\leftmargini}{12pt}
\begin{itemize}
\item
 analytic (immersive) \defff for each chart $(U,\psi)$ of $S$, and each chart $(U',\psi')$ of $S'$ with $\varrho(U)\subseteq U'$, the map 
\begin{align*}
	\rho\equiv \psi'\cp \varrho \cp \psi^{-1}\colon D\equiv \psi(U) \rightarrow \psi'(\varrho(U))\equiv  D' 
\end{align*}
is analytic (immersive), i.e., extends to an analytic (immersive) map $\wt{\rho}\colon D\subseteq I\rightarrow \RR$ ($I$ an open interval). 
\item
an analytic diffeomorphism \defff $\varrho$ is a homeomorphism, and both $\varrho,\varrho^{-1}$ are analytic (necessarily) immersive.
\end{itemize}
\endgroup
\noindent
Let $M$ be an analytic manifold (without boundary). 
An \textbf{analytic 1-submanifold} $(S,\iota)$  of $M$ is an analytic  1-manifold $S$, together with an injective analytic immersion $\iota\colon S\rightarrow M$. Specifically, this means that for each chart 
$(U,\psi)$ of $S$, the curve 
\vspace{-8pt}
\begin{align}
\label{nbcnbcnbvccvcc}
	\gamma_\psi\colon D_\psi\equiv\psi(U)\rightarrow M,\qquad t\mapsto \iota\cp \psi^{-1}(t)
\end{align} 
is analytic immersive, i.e., extends to an analytic immersive curve $\wt{\gamma}_\psi\colon D_\psi\subseteq I_\psi\rightarrow M$. 
\begin{remark}
\label{remrrrnmcxnmcx}
Let $(S,\iota)$ be an analytic 1-submanifold; and let 
 $\wm\colon G\times M\rightarrow M$ be analytic in $M$. Then, the following assertions hold: 
\begingroup
\setlength{\leftmargini}{15pt}
\begin{enumerate}
\item
\label{remrrrnmcxnmcx1} 
 	 $(S, g\cdot \iota)$ is an analytic 1-submanifold of $M$ for each $g\in G$. 
\item
\label{remrrrnmcxnmcx2} 
	If $\gamma\colon D\rightarrow M$ is analytic (immersive), then $g\cdot \gamma$ is analytic (immersive) for each $g\in G$. Moreover, if $\gamma$ is an embedding, then $g\cdot \gamma$ is an embedding for each $g\in G$.\footnote{Both statements will be applied, e.g., to the curves defined in \eqref{nbcnbcnbvccvcc} and their analytic immersive extensions.}
\end{enumerate}
\endgroup
\end{remark}
\noindent
Finally, we observe the following:
\begin{lemma}
\label{jdslkdslk}
Let $(\iota,S)$, $(\iota',S')$ be analytic 1-submanifolds of $M$, with $\iota(S)=\iota'(S')$. If $\varrho\equiv \iota'^{-1}\cp \iota \colon S \rightarrow S'$ is a homeomorphism, then    
$\varrho$ is an analytic diffeomorphism. The assumption holds, e.g., if $\iota,\iota'$ are embeddings.
\end{lemma}
\begin{proof}
	Let $x\in S$ be fixed, and let $(U,\psi)$, $(U',\psi')$ be charts of $S,S'$ with $x\in U$ and $\varrho(U)\subseteq U'$. By Corollary \ref{dfdsasasasassa} (applied to analytic immersive extensions $\wt{\gamma}_\psi, \wt{\gamma}_{\psi'}$ of $\gamma_\psi,\gamma_{\psi'}$),  we can shrink $U,U'$ around $x,\varrho(x)$ such that $\gamma_\psi,\gamma_{\psi'}$ are analytic embeddings. Then, $\iota'\cp\varrho =\iota$ yields 
\begin{align*}
	\gamma_{\psi'}\cp\underbrace{(\psi'\cp\varrho \cp\psi^{-1})}_{=: \: \rho}  = \gamma_\psi\qquad\quad\Longrightarrow\qquad\quad \gamma_{\psi'}(D')=\gamma_\psi(D)\quad\text{ for }\quad D:= \psi(U),\: D':=\rho(D). 	
\end{align*}
\vspace{-8pt}

\noindent
Now, $D'$ is an interval, because $D$ is an interval, and because $\rho$ is continuous and injective. Hence, Lemma \ref{yccxycyycxy} shows that $\rho$ is an analytic diffeomorphism; in particular, $\rho$ extends to an analytic immersion $\wt{\rho}\colon D\subseteq I\rightarrow \RR$. This shows that $\varrho$ is an analytic immersion. Then, interchanging the roles of $(S,\iota)$ and $(S',\iota')$, the same arguments applied to $\varrho^{-1}$ show that $\varrho^{-1}$ is analytic immersion; hence, that $\varrho$ is an analytic diffeomorphism.
\end{proof}

\subsection{Segments}
Let $(S,\iota)$ be a fixed analytic 1-submanifold of $M$. 
A \textbf{segment} is a non-empty, non-singleton, connected subset $\Sigma\subseteq S$. The set of all segments in $S$ is denoted by $\Seg(S)$. We observe the following:
\begingroup
\setlength{\leftmargini}{12pt}
\begin{itemize}
\item
	 $\Sigma\cap\Sigma'\neq \emptyset$ for $\Sigma,\Sigma'\in \Seg(S)$ implies $\Sigma\cup\Sigma'\in \Seg(S)$.
\item
 	Let $S\cong\ID$. Then, $C\subseteq \Sigma\cap \Sigma'$ for $C,\Sigma,\Sigma'\in \Seg(S)$ implies $\Sigma\cap\Sigma'\in \Seg(S)$.
\item
 	Let $S\cong\UE$ and $\Seg(S)\ni \LL\subset S$.   
  	 Then, $C\subseteq \Sigma\cap \Sigma'$ for $\Seg(S)\ni C,\Sigma,\Sigma'\subseteq \LL$, implies $\Sigma\cap\Sigma'\in \Seg(S)$. 
\end{itemize}
\endgroup
\noindent
Let $A\subseteq Z\in \{\RR,\UE\}$ be   a connected subset:
\begingroup
\setlength{\leftmargini}{12pt}
\begin{itemize}
\item
$\innnt[A]\subseteq Z$ denotes the interior of $A$ in the topological sense.  
\item
The elements of $\partial_A:=A\setm \innnt[A]$ are called the boundary points of $A$. 
\end{itemize}
\endgroup
\noindent
Let $\Sigma\in \Seg(S)$ in $S$ be given, and let $\homeo$ be as in \eqref{hjdshjdssdsddsds}:
\begingroup
\setlength{\leftmargini}{12pt}
\begin{itemize}
\item 
The {\bf interior of $\boldsymbol{\Sigma}$} is defined by 
	$$\innt[\Sigma]:=\homeo(\innnt[\homeo^{-1}(\Sigma)]).$$
\item
$\clos[\Sigma]$ denotes the  
closure of $\Sigma$ in $S$ in the topological sense. 
Evidently,
\begin{align*}
	\Sigma\in \Seg(S)\qquad\Longrightarrow\qquad \clos[\Sigma]\in \Seg(S).
\end{align*}
\vspace{-15pt}
\item
The elements of 
$\partial_\Sigma:=\Sigma\setm \innt[\Sigma]$ are called the \textbf{boundary points of $\boldsymbol{\Sigma}$}. Then,
$$
\partial_\Sigma=\partial[\Sigma]\cap \Sigma\qquad\text{holds for}\qquad
\partial[\Sigma]:=\clos[\Sigma]\setm\innt[\Sigma].
$$  
\end{itemize}
\endgroup
\vspace{-12pt}

\noindent
We observe the following:
\begingroup
\setlength{\leftmargini}{12pt}
\begin{itemize}
\item
	The above definitions are independent of the explicit choice of the homeomorphism $\homeo\colon \DOM\rightarrow S$.
\item
	$\innt[\Sigma]\in \Seg(S)$ holds for each $\Sigma\in \Seg(S)$; and,   
	$\innt[\Sigma]$ coincides with the topological interior of $\Sigma$ in $S$ if one of the following two conditions are fulfilled:
\begingroup
\setlength{\leftmarginii}{12pt}
\begin{itemize}
\item	
	 $S\cong \UE$.
\item
     $S\cong \ID$ for $\ID$ an open interval.
\end{itemize}
\endgroup
\vspace{-1pt}
\item
	$\innt[S]$ is the interior of $S$ in the sense of manifolds with boundary. 
\item
	$\Sigma'\subseteq \Sigma$ for $\Sigma',\Sigma\in \Seg[S]$ implies $\innt[\Sigma']\subseteq \innt[\Sigma]$.
\item
$\partial[\Sigma]=\partial_\Sigma$ holds if $\Sigma\in \Seg(S)$ is closed in $S$; e.g., if $\Sigma$ is compact or if $\Sigma=S$ holds. 
\item
$\partial_{\homeo^{-1}(\Sigma)}=\homeo^{-1}(\partial_\Sigma)$ holds for each $\Sigma\in  \Seg(S)$.
\end{itemize}
\endgroup
\noindent
A segment $\Seg(S)\ni \Sigma'\subseteq \Sigma$ is called \textbf{boundary segment} of $\Sigma$ \defff $\Sigma\setm \Sigma'$ is connected (possibly empty).\footnote{Notably, if $S\cong \UE$, then each $\Sigma'\in \Seg(S)$ is a boundary segment of $S$.} 
\begin{remark}
\label{cnmnmcviufeiureiure}
Applying the homeomorphism \eqref{hjdshjdssdsddsds}, we see the following:
\begingroup
\setlength{\leftmargini}{15pt}
\begin{enumerate}
\item
\label{bdfoudnnddhhdhdhdseg34930943094309433434434343}
Let $\Seg(S)\ni \Sigma'\subseteq \Sigma\in \Seg(S)$ with $\partial_{\Sigma}\cap\partial_{\Sigma'}\neq \emptyset$. Then,  $\Sigma\setminus\Sigma'$ is connected (\he$\Sigma'$ is a boundary segment of $\Sigma$).
\item
 \label{bdfoudnnddhhdhdhdseg34930943094309433434434343dsdsds}
Let $\Seg(S)\ni \Sigma \subset S$ be compact.  
Then,   
$\partial[\Sigma]=\partial_\Sigma$ consists of two elements, and $\partial_\Sigma\cap \innt[S]\neq \emptyset$ holds.
\item
\label{bdfoudnnddhhdhdhdseg3493094309430943343443434398ds98ds98ds98dsds}
Let $\Sigma,\Sigma'\in \Seg(S)$ be compact with $\Sigma'\subseteq \Sigma$. Then, $\Sigma'= \Sigma$ is equivalent to $\partial_{\Sigma'}=\partial_\Sigma$.
\item
\label{cnmnmcviufeiureiure1}
Let $\OO,C\in \Seg(S)$ with $\OO$ open and $C\cap \OO\neq \emptyset$. Then, there exists $\UU\in \Seg(S)$ open with $\UU \subseteq C\cap \OO$.
\item
\label{cnmnmcviufeiureiure5a}
Let $C,C'\in \Seg(S)$  with $C\cap C'\nsubseteq \partial_C$. Then, there exists $\OO'\in \Seg(S)$ open with $\OO'\subseteq  C\cap C'$.
\item
\label{cnmnmcviufeiureiure5}
Let $C,C'\in \Seg(S)$  with $C\cap C'\neq \partial_C\cap\partial_{C'}$. Then, there exists $\OO'\in \Seg(S)$ open with $\OO'\subseteq  C\cap C'$.\footnote{This is straightforward from Point \ref{cnmnmcviufeiureiure5a}.} 
\item
\label{cnmnmcviufeiureiure2}
	If $\Seg(S)\ni \LL \subset S$ holds, then $\LL$ is homeomorphic to an interval (see also Remark \ref{dslkdslkdslklkdslkdslkslkslkdsdsds}).
\item
\label{cnmnmcviufeiureiure3}
Let $\Sigma,C\in \Seg(S)$ be given with $C\subset \Sigma$. Then, there exists some $x\in \partial[C]\cap \Sigma$ such that 
	$$\UU\cap (C\setm \{x\})\neq \emptyset\neq    
	\UU\cap  (\Sigma\setm C)\quad\text{holds for each neighbourhood}\:\:\: \UU\:\:\text{of}\:\:\: x.
	$$
(\he{}If $C$ is compact, then we necessarily have $x\in \partial[C]=\partial_C $.\he)
\item
\label{cnmnmcviufeiureiure3b}
Let $\Sigma,C\in \Seg(S)$ be given with $C\subseteq \Sigma$ such that $\Sigma\setm C$ is not connected. Then, 
$$\partial[C]\cap \Sigma=\{x^+,x^-\}\qquad\text{holds for certain}\qquad \Sigma\ni x^+\neq x^-\in \Sigma.$$ 
Moreover, for each neighbourhood $\UU^\pm$ of $x^\pm$, we have 
$$\UU^\pm\cap (C\setm\{x^\pm\})\neq \emptyset\neq\UU^\pm\cap  (\Sigma\setm C).\hspace{10pt}$$
\vspace{-17pt}
\item
\label{cnmnmcviufeiureiure3c}
Let $\KK,\LL'\in \Seg(S)$ be given, with $\KK$ compact and $\KK\subset \LL'$.  Then, there exists some $x\in \partial_\KK\cap \innt[\LL']$ such that for each neighbourhood $\UU$ of $x$, we have 
	$$\UU\cap (\KK\setm\{x\})\neq \emptyset\neq\UU\cap  (\LL'\setm \KK).$$
\vspace{-17pt}
\item
\label{cnmnmcviufeiureiure4}
Let $\KK\in \Seg(S)$ be compact with 
$\KK \subset \innt[S]$. 
 Then, there exist $\UU,\LL\in \Seg(S)$ with $\UU$ open and $\LL$ compact, such that $\KK\subset \UU\subset \LL\subset \innt[S]$ holds.
\item 
\label{cnmnmcviufeiureiure6}
Let $\Seg(S)\ni \Sigma \subset S$ and $z\in \partial_\Sigma\cap \innt[S]$ be given. Then, there exists $\Seg(S)\ni \UU\subseteq \innt[S]$ open with $z\in \UU$ such that $\wt{S}:=\Sigma\cup \UU\subset S$ holds, hence $\wt{S}\in \Seg(S)$ is homeomorphic to an interval by Point \ref{cnmnmcviufeiureiure2}. 
\item 
\label{cnmnmcviufeiureiure6cxcxcxcx}
Let $\LL\in \Seg(S)$ and $(U,\psi)$ be a chart of $S$ with $x\in \LL\cap U$. Then, there exists an interval $D_x$ with $\psi(x)\in D_x\subseteq \psi(\LL\cap U)$ (see also Remark \ref{dsoidsoidsoids8ds98ds98ds98dsdsdsdsds}.\ref{sd0909dssoididsoidsoidsoidsoidsdsdsdsdsewew2hjhhjhj}). 
\end{enumerate}
\endgroup
\end{remark}
\begin{lemma}
\label{dsodsoioidsoidsdsds98ds09ds09dsdsds}
Let $\LL\in \Seg(S)$, $A\subseteq S$ closed, and $O\subseteq S$ open. Then,
\begin{align}
\label{soidoidsoidsiodsdsds0909ds0990sd}
\emptyset\neq\LL\cap O= \LL\cap A\qquad\quad\Longrightarrow\qquad\quad \LL\subseteq O\cap A.
\end{align}
\end{lemma}
\begin{proof}
By the left side of \eqref{soidoidsoidsiodsdsds0909ds0990sd}, 
$M:=\LL\cap O$ is non-empty, as well as closed and open in $\LL$ (w.r.t.\ the subspace topology on $\LL$). Hence, $M=\LL$ holds by connectedness of $\LL$; from which the right side of \eqref{soidoidsoidsiodsdsds0909ds0990sd} is clear. 
\end{proof} 

\begin{remark}
\label{dsoidsoidsoids8ds98ds98ds98dsdsdsdsds}
Let $(U,\psi)$ be a chart of $S$, and let $\LL\in \Seg(S)$ be a segment:
\begingroup
\setlength{\leftmargini}{13pt}
{
\renewcommand{\theenumi}{\arabic{enumi})} 
\renewcommand{\labelenumi}{\theenumi}
\begin{enumerate}
\item
\label{sd0909dssoididsoidsoidsoidsoidsdsdsdsdsewew2}
Let 
$O\subseteq S$ be open with $O\subseteq U$, and let $K\subseteq \psi(O)$ be compact. Then,
\begin{align}
\label{dslkdsdsoiuiuewewnbewnmewds998ds98ds98ds09dsdsds}
\emptyset\neq \psi(\LL\cap O)\subseteq K\qquad\Longrightarrow\qquad \LL\subseteq O.
\end{align}
\vspace{-26pt}
\begin{proof}
$A:=\psi^{-1}(K)\subseteq O$ is compact as $K$ is compact, hence closed in $S$. Then, $A\subseteq O$ (first step) together with the left side of \eqref{dslkdsdsoiuiuewewnbewnmewds998ds98ds98ds09dsdsds} (last two steps) yields
\begin{align*}
\psi(\LL\cap A)=\psi(\LL\cap (O \cap A))=\psi((\LL\cap O) \cap A)=\psi(\LL\cap O)\cap\psi(A)=\psi(\LL\cap O)\cap K=\psi(\LL\cap O)\neq \emptyset,
\end{align*} 
so that Lemma \ref{dsodsoioidsoidsdsds98ds09ds09dsdsds}  shows the claim.
\end{proof}
\vspace{-6pt}
\item
\label{sd0909dssoididsoidsoidsoidsoidsdsdsdsdsewew2hjhhjhj} 
Let $x\in \LL\cap U$. Then, there exists an interval $D_x$ with $\psi(x)\in D_x\subseteq \psi(\LL\cap U)$. 
\vspace{-7pt}
\begin{proof}
Assume that the claim is wrong; and set $B:=\psi(U\setminus(\LL\cap U))$:
\begingroup
\setlength{\leftmarginii}{11pt}
\begin{itemize}
\item
If $x\in \partial[S]$,  then there exists $\psi(U)\ni  \psi(x)=0<k\hspace{3.5pt}\in B$. We set $A:=\psi^{-1}([0,k])$ and $O:=\psi^{-1}([0,k))$.
\item
If $x\in \innt[S]$, then there exist $B\ni k'< \psi(x)< k\in B$. We set $A:=\psi^{-1}([k',k])$ and $O:=\psi^{-1}((k',k))$. 
\end{itemize}
\endgroup
In both cases, we have $x\in \LL\cap A=\LL\cap O\neq \emptyset$ (with $A$ closed as compact, and $O$ open), so that  Lemma \ref{dsodsoioidsoidsdsds98ds09ds09dsdsds} yields $\LL\subseteq O\subseteq U$. Then, $D_x:=\psi(\LL\cap U)=\psi(\LL)$ is an interval with $\psi(x)\in D_x\subseteq \psi(U)$,   
which contradicts the assumption that such an interval does not exist.
\end{proof}
\end{enumerate}}
\endgroup
\end{remark}
\begin{remark}
\label{dslkdslkdslklkdslkdslkslkslkdsdsds}
If $S=\UE$ and $\Sigma\subset S$, then  
\begin{align*}
\Sigma\in \Seg(S)\qquad\Longleftrightarrow\qquad 
\Sigma=\e^{\I (a + D)}\quad\text{for}\quad a\in \RR,\:\: D\subseteq (0,2\pi)\:\:\text{an interval}
\end{align*}
whereby $\partial_\Sigma=\e^{\I \partial_D}$ holds, as well as:
\begingroup
\setlength{\leftmargini}{11pt}
\begin{itemize}
\item
$|\partial_\Sigma|=0$\quad$\Leftrightarrow$\quad\hspace{11.5pt} $\Sigma$ open \quad$\Leftrightarrow$\quad $D$ open
\item
$|\partial_\Sigma|=2$\quad$\Leftrightarrow$\quad $\Sigma$ compact  \quad$\Leftrightarrow$\quad $D$ compact
\item
$|\partial_\Sigma|=1$\quad$\Leftrightarrow$\quad 
$\Sigma$ neither closed nor open\quad$\Leftrightarrow$\quad $D$ neither closed nor open
\end{itemize}
\endgroup
\noindent
In fact, write $\psi\colon \RR\ni \alpha\mapsto \e^{\I  \alpha}\in \UE$: 
\begingroup
\setlength{\leftmargini}{11pt}
\begin{itemize}
\item
If $a\in \RR$, then $\psi|_{a+(0,2\pi)}$ is a homeomorphism onto its  image (open) by the inverse function theorem.
\item
If $S\supset \Sigma\in \Seg(S)$, then there exists $a\in \psi^{-1}(S\setminus \Sigma)$, and we have $\Sigma\subseteq  \im[\psi|_{a+(0,2\pi)}]=S\setminus\{\psi(a)\}$.   
\end{itemize}
\endgroup
\end{remark}

\subsubsection{The 1-Manifold Structure of Segments}
\label{kjdskjdskjdskjdskjdsds0ds09ds09ds09ds98ds}
Each $\Sigma\in \Seg(S)$ is an \emph{analytic 1-manifold}, 
\begingroup
\setlength{\leftmargini}{12pt}
\begin{itemize}
\item[$\cp$]
	with interior equal to $\innt[\Sigma]$,
\item[$\cp$]
	with boundary equal to  
	$\partial_\Sigma$,
\end{itemize}
\endgroup
\noindent
that is embedded into $S$ via the (analytic immersive) inclusion map. Specifically, given $z\in \Sigma$, then  either one of the following situations hold:
\begingroup
\setlength{\leftmargini}{12pt}
\begin{itemize}
\item
\label{sdllds}
$z\in \innt[\Sigma]$:\hspace{8.5pt}    
	There exists a chart $(U,\psi)$ with $z\in U\subseteq \innt[\Sigma]$ 
	and  $\psi(z)=0$.
\item 
$z\in \partial_\Sigma\cap\partial_S$:   
There exists a chart $(U,\psi)$ with $z\in U\subseteq \Sigma$, $\psi(z)=0$ and $\psi(U\cap\Sigma)=[0,i)\subseteq \mathbb{H}^1$ for $i>0$. 
\item 
$z\in \partial_\Sigma\cap \innt[S]$:  
There exists a chart $(U,\psi)$ with $z\in U$, $\psi(z)=0$, as well as 
\begin{align}
\label{dskjkjdskjdsdsiudsiuiudsds8787ds87ds87dsdsdsdsdsds}
\psi(U\cap\Sigma)=[0,i)\subseteq \mathbb{H}^1\qquad \text{and}\qquad  \psi(U\setminus \Sigma)=(i',0)\qquad\text{for certain}\qquad i'<0<i.
\end{align}  
\end{itemize}
\endgroup
\noindent
Charts  as described above are called \textbf{submanifold charts of $\boldsymbol{\Sigma}$ centered at $\boldsymbol{z}$}; and, $\Sigma$ is  equipped with the analytic structure induced by them. 
\begin{remark}
\label{remrrr} 
\begingroup
\setlength{\leftmargini}{15pt}
\begin{enumerate}
\item[]
\item
\label{remrrr1} 
Let $\varrho\colon \Sigma\rightarrow \Sigma'$ be a homeomorphism (analytic diffeomorphism) between segments (analytic 1-manifolds) $\Sigma,\Sigma'\in \Seg(S)$.  Then, we have $\varrho(\innt[\Sigma])=\innt[\Sigma']$ as well as $\varrho(\partial_\Sigma)=\partial_{\Sigma'}$.  
\item
\label{remrrr2} 
Let $\wm\colon G\times M\rightarrow M$ be analytic in $M$,  and let $\Sigma\in \Seg(S)$. 
Then, $(\Sigma,g\cdot \iota|_\Sigma)$ is an analytic 1-submanifold of $M$ for each $g\in G$. Moreover, if $\iota|_\Sigma$ is an embedding, then $g\cdot \iota|_\Sigma$ is an embedding for each $g\in G$.
\end{enumerate}
\endgroup
\end{remark}
\subsubsection{Overlaps of Segments}
Let $(S,\iota)$ be an analytic 1-submanifold of $M$.
\begin{corollary}
\label{fddsfd} 
Let $\wm\colon G\times M\rightarrow M$ be analytic in $M$; and let $\Sigma,\Sigma'\in \Seg(S)$ and $g\in G$ with $g\cdot \iota(\Sigma)=\iota(\Sigma')$ such that
\vspace{-4pt} 
$$\varrho\equiv (\iota|_{\Sigma'})^{-1}\cp (g\cdot  \iota|_\Sigma)\colon \Sigma\rightarrow \Sigma'$$ 
is a homeomorphism (e.g.\ if $\iota|_\Sigma,\iota|_{\Sigma'}$ are embeddings). Then, $\varrho$ is an analytic diffeomorphism. 
\end{corollary}
\begin{proof}
Clear from Remark \ref{remrrr}.\ref{remrrr2} and Lemma \ref{jdslkdslk}.
\end{proof}
\noindent
Next, given $\Sigma,\Sigma'\in \Seg(S)$ and $g\in G$, we write
\vspace{-4pt}
\begin{align}
\label{kjdskjskjdskjsd}
	g\cdot \iota|_\Sigma\cpsim \iota|_{\Sigma'}
	\qquad\stackrel{{\rm def.}}{\Longleftrightarrow}\qquad g\cdot \iota(\OO)= \iota(\OO')
\end{align}
 holds for open segments $\OO\subseteq \Sigma$, $\OO'\subseteq \Sigma'$ on which $\iota$ is an embedding. 
We observe the following:
\begingroup
\setlength{\leftmargini}{12pt}
\begin{itemize}
\item
	In the situation of \eqref{kjdskjskjdskjsd}, we have 
	\begin{align*}
	g\cdot \iota|_\OO=\iota\cp \varrho\qquad\text{for}\qquad\varrho\equiv \iota^{-1}\cp (g\cdot \iota)|_\OO\colon \OO\rightarrow\OO'.
\end{align*}
 If $\wm$ is analytic in $M$, then $\varrho$ is an analytic diffeomorphism by Corollary \ref{fddsfd}. 
\item
	Let $\Sigma,\Sigma'\in \Seg(S)$ be given such that $g\cdot \iota|_\Sigma=\iota\cp \varrho$ holds for a homeomorphism  (analytic diffeomorphism) $\varrho\colon \Sigma\rightarrow \Sigma'$. Then,  Corollary \ref{dfdsasasasassa} implies $g\cdot \iota|_\Sigma\cpsim \iota|_{\Sigma'}$. 
\item
We have the implication
\begin{align}
\label{maxicp}
g\cdot \iota|_{\clos[\Sigma]}\cpsim \iota|_{\clos[\Sigma]}\quad \text{for}\quad g\in G,\:\Sigma\in \Seg(S)\qquad\quad&\Longrightarrow\qquad\quad g\cdot \iota|_\Sigma\cpsim \iota|_\Sigma.
\end{align}
In fact, the implication is clear if $S\cong\ID$ holds; and, in the case $S\cong U(1)$, one only has to take a closer look at the situation $\Sigma=S\setm\{z\}$ with $z\in S$.
\end{itemize}
\endgroup
\noindent
Finally, we define 
\begin{align}
\label{dslkjkjskjds}
	\overlap(S):=\{g\in G\:|\: g\cdot \iota\cpsim \iota\}\qquad\text{and observe that}\qquad g\in \overlap(S)\quad\Longleftrightarrow\quad g^{-1}\in \overlap(S)\qquad\text{holds.}\qquad
\end{align}

\subsection{Basic Properties}
In this section, let $\wm\colon G\times M\rightarrow M$ be a Lie group action that is analytic in $M$; and let $(S,\iota)$ denote an analytic 1-submanifold of $M$.\footnote{Notably, Lemma \ref{fdggfg}, Lemma \ref{gffdg},  Corollary \ref{fdsdsffds} and Lemma \ref{kjdskjdskjdskjds}  also hold without the assumption that $\wm$ is analytic in $M$.}  

\subsubsection{Some Elementary Statements}
We will frequently use the following working lemma (recall the definitions made in \eqref{nbcnbcnbvccvcc}): 
\begin{lemma}
\label{lemma:BasicAnalytt1}
Let $(U,\psi)$, $(U',\psi')$ be charts of $S$; and let $\wt{\gamma}_\psi,\wt{\gamma}_{\psi'}$ be analytic immersive extensions of $\gamma_\psi,\gamma_{\psi'}$. Assume that we are given    
$x\in U$, $x'\in U'$, $g\in G$, as well as sequences
\begin{align*} 
& U\setm\{x\}\supseteq \{x_n\}_{n\in \NN}\rightarrow x\in U\qquad\quad\wedge\qquad\quad U'\setminus\{x'\}\supseteq \{x'_n\}_{n\in \NN}\rightarrow x'\in U'\\[3pt]
&\hspace{149pt} \text{with}\\[3pt]
&\hspace{100pt}	g\cdot\iota(x_n)=\iota(x'_n)\qquad\forall\: n\in \NN.
\end{align*}
\vspace{-19pt}

\noindent
Then, the following assertions hold:
\begingroup
\setlength{\leftmargini}{15pt}
\begin{enumerate}
\item
\label{lemma:BasicAnalytt11}
There exist open intervals $J,J'$ with $t\equiv \psi(x)\in J$, $t'\equiv \psi'(x')\in J'$ such that $\wt{\gamma}_\psi|_{J}$ (hence, $g\cdot\wt{\gamma}_\psi|_{J}$) and $\wt{\gamma}_{\psi'}|_{J'}$ are embeddings,  and that $g\cdot \wt{\gamma}_\psi|_J=\wt{\gamma}_{\psi'}\cp\rho$ holds for an (unique) analytic diffeomorphism $\rho\colon J\rightarrow J'$ with $\rho(t)=t'$, hence $g\cdot \wt{\gamma}_\psi(J)=\wt{\gamma}_{\psi'}(J')$. 
\begingroup
\setlength{\leftmarginii}{17pt}
\begin{itemize}
\item[$(\star)$]
Thus, if $S$ admits no boundary, then there exists an open neighbourhood $U_x$ of $x$ as well as an open subset $\UU\subseteq S$ with $g\cdot \iota(U_x)=\iota(\UU)$.\footnote{The statement does not necessarily hold if $S$ admits a boundary --   To see this, let, e.g., $\wm$ denote the additive action of $G\equiv \RR$ on $M\equiv \RR$, and consider $S\equiv (-\infty,0]$, $\iota\colon S\ni t\mapsto t\in M$, $g\equiv -1$, $x\equiv 0$, $x'\equiv -1$.}	
\end{itemize}
\endgroup
\item
\label{slkdlkdslksddsiusdiusiudds98sd98sd98s9dsdsds}
There exist $\Seg(S)\ni \KK\subseteq U$ and $\Seg(S)\ni \KK'\subseteq U'$ compact with $x\in \KK$ and $x'\in \KK'$ such that
$$
g\cdot\iota(\KK)=\iota(\KK')\qquad\quad\text{as well as}\qquad\quad x_n\in \KK,\: x'_n\in \KK'\qquad\forall\: n\geq N 
$$
holds for some $N\in \NN$; whereby we additionally have:
\begingroup
\setlength{\leftmarginii}{15pt}
{
\renewcommand{\theenumii}{{.\alph{enumii}})} 
\renewcommand{\labelenumii}{{\alph{enumii}})}
\begin{enumerate}
\item
\label{lemma:BasicAnalytt12} 
If $x,x'\in\partial[S]$,\hspace{6pt} then $x\in \partial[\KK]$, $x'\in \partial[\KK']$ holds, and $\KK,\KK'$ are  neighbourhoods of $x,x'$, respectively.
	\item
\label{lemma:BasicAnalytt123} 
	If $x,x'\in \innt[S]$, then $x\in \partial[\KK]$, $x'\in \partial[\KK']$ holds, and $\KK,\KK'$ are neighbourhoods of $x,x'$, respectively.
\item
\label{lemma:BasicAnalytt13}
	If $x\in \partial[S]$, $x'\in \innt[S]$, then $x\in \partial[\KK]$, $x'\in \partial[\KK']$ holds, and $\KK$ is a neighbourhood of $x$.
\item
\label{lemma:BasicAnalytt14}
If $x\in \innt[S]$, $x'\in \partial[S]$, then $x\in \partial[\KK]$, $x'\in \partial[\KK']$ holds, and $\KK'$ is a neighbourhood of $x'$.
\end{enumerate}}
\endgroup
\end{enumerate}
\endgroup
\end{lemma}
\begin{proof}
Part \ref{lemma:BasicAnalytt11} is clear from Corollary  \ref{sfdsfdsfdfdhhju}, because Corollary \ref{dfdsasasasassa} shows that there exist intervals $J,J'\subseteq \RR$ with $t\in J$, $t'\in J'$ such that $\wt{\gamma}_\psi|_J$, $\wt{\gamma}_{\psi'}|_{J'}$ are embeddings. For the supplementary statement $(\star)$ observe that if $S$ admits no boundary, then we can choose $\wt{\gamma}_\psi\equiv \gamma_\psi$ and $\wt{\gamma}_{\psi'}\equiv \gamma_{\psi'}$ right from the beginning; so that  the statement just holds for $U_x:=\psi^{-1}(J)$ and $\UU:=\psi^{-1}(J')$. 
Finally, Part \ref{slkdlkdslksddsiusdiusiudds98sd98sd98s9dsdsds} is immediate from Part \ref{lemma:BasicAnalytt11} as well as the homeomorphism property of $\rho$.
\end{proof}
\begin{lemma}
\label{compnobound}
Let $S\cong\UE$ and assume that $g\cdot \iota\cpsim \iota$ holds for some $g\in G$. Then, we have  
$g\cdot \iota(S)=\iota(S)$. 
In particular, if $\Sigma\in \Seg(S)$ is a segment, then $\Sigma':=\iota^{-1}(g\cdot \iota(\Sigma))\in \Seg(S)$ is a segment; and, $g\cdot \iota|_\Sigma=\iota\cp \varrho$ holds for the analytic diffeomorphism (uniqueness) $\varrho\equiv (\iota|_{\Sigma'})^{-1}\cp (g\cdot \iota|_\Sigma)\colon \Sigma\rightarrow \Sigma'$. 
\end{lemma}
\begin{proof} 
Assume first that $g\cdot \iota(S)=\iota(S)$ holds, i.e., that $\iota^{-1}\cp (g\cdot \iota)$ is defined and bijective. Then, $\iota^{-1}\cp (g\cdot \iota)$ is a homeomorphism, because $\iota$ and $g\cdot \iota$ are embeddings by compactness of $S$. Thus, given $\Sigma\in \Seg(S)$, then $\Sigma':=\iota^{-1}(g\cdot \iota(\Sigma))\in \Seg(S)$ is a segment (connected, non-empty, non-singleton) and $\varrho\equiv (\iota|_{\Sigma'})^{-1}\cp (g\cdot \iota|_\Sigma)$ is a homeomorphism, hence an  analytic diffeomorphism by  Corollary  \ref{fddsfd}. 

It remains to show that $g\cdot \iota(S)=\iota(S)$ holds. For this, let $Z\subseteq S$ denote the set of all $z\in S$ such that there exists an open neighbourhood $U_z\subseteq S$ of $z$ and an open subset $\UU\subseteq S$ with $g\cdot \iota(U_z)= \iota(\UU)$. Then, $Z$ is obviously open, as well as non-empty by assumption. The claim thus follows from connectedness of $S$, once we have shown that $Z$ is closed.

Assume thus that $Z$ is not closed, i.e., that there exists some $x\in \clos[Z]\setminus Z$. Let $(U,\psi)$ be a chart around $x$. Since $U'\cap (Z\setminus \{x\})\neq  \emptyset$ holds for each neighbourhood $U'\subseteq U$ of $x$, there exists a sequence $\{x_n\}_{n\in \NN}\subseteq U\cap (Z\setminus \{x\})$ with 
\begin{align}
\label{nmdsnmsnmdsnmyxyxyxyy}
	\textstyle x_m\neq x_n \qquad\forall\: \NN\ni m\neq n\in \NN\qquad\qquad\text{and}\qquad\qquad\lim_n x_n=x.
\end{align}
Since $g\cdot \iota(Z)\subseteq \iota(S)$ holds, $x'_n:=\iota^{-1}(g\cdot \iota|_Z(x_n))\in S$ is defined for all $n\in \NN$; and, by compactness of $S$, we can assume that $\lim_n x'_n =x'\in S$ exists. 
Since $\iota^{-1}\cp (g\cdot \iota|_Z)$ is injective, by the left side of \eqref{nmdsnmsnmdsnmyxyxyxyy} we can pass to a subsequence to achieve that $x'_n\neq x'$ holds for all $n\in \NN$. 
We fix a chart $(U',\psi')$ around $x'$. Then, passing to a subsequence once more, we can achieve $\{x'_n\}_{n\in\NN}\subseteq U'$. Since $S\cong \UE$ holds (no boundary), the statement $(\star)$ in Lemma \ref{lemma:BasicAnalytt1}.\ref{lemma:BasicAnalytt11} shows   $x\in Z$, which contradicts the choice of $x$.
\end{proof}
\begin{lemma}
\label{fdggfg}
Let $g\in G$ and $C_1,C_2\in \Seg(S)$ be given with $C_1\cap C_2\neq \emptyset$, such that the maps
\begin{align*}
	&\alpha_1\equiv \iota^{-1}\cp (g\cdot \iota)|_{C_1}\colon C_1 \rightarrow \alpha_1(C_1)\in \Seg(S)\qquad\quad\text{and}\qquad\quad \alpha_2\equiv \iota^{-1}\cp (g\cdot \iota)|_{C_2} \colon C_2\rightarrow \alpha_2(C_2)\in \Seg(S)
\end{align*}
are defined and homeomorphisms. 
Then, the following assertions hold: 
\begingroup
\setlength{\leftmargini}{15pt}
{
\renewcommand{\theenumi}{{\arabic{enumi}})} 
\renewcommand{\labelenumi}{\theenumi}
\begin{enumerate}
\item
\label{fdggfg11}
$\Sigma\equiv C_1\cup C_2$\: and\: $\Sigma'\equiv\alpha_1(C_1)\cup\alpha_2(C_2)$\: are segments. 
\item
\label{fdggfg21}
If there exist homeomorphisms 
	$\chi\colon \Sigma\rightarrow D$, $\chi'\colon \Sigma'\rightarrow D'$  
for certain intervals $D,D'\subseteq \RR$, then 
\begin{align}
\label{dsoioidsoidsoidsds0ds0909ds09dsdsdsddsd}
	\varrho\equiv (\iota|_{\Sigma'})^{-1}\cp (g\cdot \iota)|_{\Sigma}\colon \Sigma \rightarrow \Sigma'
\end{align}
 is an analytic diffeomorphism. 
For instance,  such homeomorphisms exist 
\begingroup
\setlength{\leftmarginii}{14pt}
{
\renewcommand{\theenumii}{{.\alph{enumii}})} 
\renewcommand{\labelenumii}{{\alph{enumii}})}
\begin{enumerate}
\item
\label{fdggfg1}
	if $S$ is homeomorphic to an interval.
\item
\label{fdggfg2}
	if there exist segments $\Seg(S)\ni \LL,\LL'\subset S$ with 
	 $C_1\cup C_2\subseteq \LL$ and $\varrho(C_1)\cup\varrho(C_2)\subseteq \LL'$ (by Remark \ref{cnmnmcviufeiureiure}.\ref{cnmnmcviufeiureiure2}).
\end{enumerate}}
\endgroup
\end{enumerate}}
\endgroup
\end{lemma}
\begin{proof}
First observe that the assumptions imply that \eqref{dsoioidsoidsoidsds0ds0909ds09dsdsdsddsd} is defined and bijective, and that the restrictions 
\begin{align*}
	\varrho|_{C_1}=\alpha_1,\qquad\varrho|_{C_2}=\alpha_2, \qquad \varrho^{-1}|_{\alpha_1(C_1)}=\alpha_1^{-1}, \qquad \varrho|^{-1}_{\alpha_2(C_2)}=\alpha_2^{-1}
\end{align*}
are continuous.  
Then, $\Sigma,\Sigma'$ are segments (non-empty, non-singleton, connected), because we have
\begin{align}
\label{jdsisdoiusiuds}
	C_1\cap C_2\neq \emptyset\qquad\text{and then also}\qquad \alpha_1(C_1)\cap \alpha_2(C_2)=\varrho(C_1)\cap \varrho(C_2)
 = \varrho(C_1\cap C_2)\neq \emptyset,
\end{align}
which proves Part \ref{fdggfg11}. 
For Part \ref{fdggfg21}, first assume that $C_2\subseteq C_1$ (hence $\varrho=\alpha_1$) or $C_1\subseteq C_2$ (hence $\varrho=\alpha_2$) holds. Then, $\varrho$ is a homeomorphism, and the claim is clear from Corollary \ref{fddsfd}. Finally, assume that we have 
\begin{align*}
&C_1\nsubseteq C_2\quad\:\: \text{hence,}\quad\:\:\alpha_1(C_1)=\varrho(C_1)\nsubseteq\varrho(C_2)= \alpha_2(C_2)\\ 
\text{as well as}\:\:\quad&C_2\nsubseteq C_1\quad\:\: \text{hence,}\quad\:\:\alpha_2(C_2)=\varrho(C_2)\nsubseteq \varrho(C_1)= \alpha_1(C_1).\qquad\qquad 
\end{align*}
Applying the homeomorphisms $\chi,\chi'$ and using \eqref{jdsisdoiusiuds}, we obtain the following statements:
\begingroup
\setlength{\leftmargini}{14pt}
{
\renewcommand{\theenumi}{\small{\bf\arabic{enumi})}} 
\renewcommand{\labelenumi}{\theenumi}
\begin{enumerate}
\item
\label{hdshjdshj1}
	If $x\in \Sigma$ is an accumulation point of $C_1\setm C_2$ in $\Sigma$, then we have $x\in C_1$. 
\item
\label{hdshjdshj2}	
	If $x\in \Sigma$ is an accumulation point of $C_2\setm C_1$ in $\Sigma$, then we have $x\in C_2$. 
\item
\label{hdshjdshj3}
	If $x\in \Sigma'$ is an accumulation point of $\alpha_1(C_1)\setm \alpha_2(C_2)$ in $\Sigma'$, then we have $x\in \alpha_2(C_1)$. 
\item
\label{hdshjdshj4}
	If $x\in \Sigma'$ is an accumulation point of $\alpha_2(C_2)\setm \alpha_1(C_1)$ in $\Sigma'$, then we have $x\in \alpha_2(C_2)$. 
\end{enumerate}}
\endgroup
\noindent
Then, to show that $\varrho$ is an analytic diffeomorphism, by Corollary \ref{fddsfd}, it suffices to show that $\varrho$ is a homeomorphism. We now only prove that $\varrho$ is continuous; because, continuity of $\varrho^{-1}$ then follows by the same arguments, with $C_1,C_2,\alpha_1,\alpha_2$  replaced by $\alpha_1(C_1),\alpha_2(C_2),\alpha_1^{-1},\alpha_2^{-1}$, respectively.
Let now $\{x_n\}_{n\in \NN}\subseteq \Sigma$ be given,  with $\lim_n x_n=x\in \Sigma$. We have to show that then $\lim_n\varrho(x_n)=\varrho(x)$ holds: 
\begingroup
\setlength{\leftmargini}{10pt}
\begin{itemize}
\item
Assume $x\in C_1\cap C_2$. Then, the claim is immediate from   continuity of $\alpha_1$ and $\alpha_2$. 
\item
Assume $x\in C_1\setm C_2$ (the case $x\in C_2\setm C_1$ is treated analogously):
\vspace{-4pt}
\begingroup
\setlength{\leftmarginii}{10pt}
\begin{itemize}
\item
	If $\{x_n\}_{\NN\he\ni\he n\he\geq\he N}\subseteq C_1$ holds for some $N\in \NN$, then we have
$\lim_n\varrho(x_n)=\lim_n \alpha_1(x_n)=\alpha_1(x)=\varrho(x)$.
\item
In the other case, $\{x_n\}_{n\in \NN}$ admits a subsequence $C_2\setm C_1\supseteq \{x_{n(m)}\}_{m\in \NN}\rightarrow x\in C_1\setm C_2$, i.e., $x$ is an accumulation point of $C_2\setm C_1$ in $\Sigma$. Then,  \ref{hdshjdshj2} shows $x\in C_2$, which contradicts $x\in C_1\setminus C_2$.\qedhere
\end{itemize}
\endgroup
\end{itemize}
\endgroup
\end{proof}

\subsubsection{Analytic Extensions}
In this section, we discuss extensions of analytic maps between segments and manifolds. Then, we use such extensions to prove an overlap result for the case $S\cong \ID$ (see Proposition \ref{nobound}) that complements Lemma \ref{compnobound}.
\begin{lemma} 
\label{gffdg}
Let $N$ be an analytic manifold with boundary, $\Sigma\in \Seg(S)$ a segment, and $\alpha,\alpha'\colon \Sigma\rightarrow N$ analytic maps. Then, we have the  implication
\begin{align*}
\alpha|_{C}=\alpha'|_{C}\quad\text{for}\quad \Seg(S)\ni C\subseteq \Sigma\qquad\quad \Longrightarrow\qquad\quad \alpha=\alpha'.
\end{align*}
\end{lemma}
\begin{proof}
Let $\Omega\subseteq \Seg(S)$ denote the set of all $C'\in \Seg(S)$ with 
$$C\subseteq C'\subseteq  \Sigma\qquad\quad \text{and}\qquad\quad \alpha|_{C'}=\alpha'|_{C'}.$$
\vspace{-15pt} 

\noindent
Then, $\Sigma':=\bigcup_{C'\in \Omega} C'\in \Seg(S)$ is a segment, with 
$\alpha|_{\Sigma'}=\alpha'|_{\Sigma'}$. Moreover,    
$\Sigma'$ is closed in $\Sigma$: 
\vspace{6pt}

\noindent
{\it Proof of the Claim.}
Let $\hat{\Sigma}'$ denote the closure of $\Sigma'$ in $\Sigma$: 
\begingroup
\setlength{\leftmargini}{11pt}
\begin{itemize}
\item
Since $\alpha,\alpha'$ are continuous, we have $\alpha|_{\hat{\Sigma}'}=\alpha'|_{\hat{\Sigma}'}$. 
\item
Since $\hat{\Sigma'}\subseteq S$ is connected (hence a segment) the definitions enforce $\Sigma'=\hat{\Sigma}'$.
\hspace*{\fill}$\ddagger$ 
\end{itemize}
\endgroup
\noindent 
Since $\Sigma$ is connected, the claim follows once we have shown that $\Sigma'$ is open in $\Sigma$. 
For this, let $z\in \Sigma'$ be fixed. We choose a submanifold chart $(U,\psi)$ of $\Sigma$ centered at $z\in U$, and a chart $(V,\phi)$ of $N$, such that   $\alpha(U),\alpha'(U)\subseteq V$ holds. We write $\phi=(\phi_1,\dots,\phi_{\dim[N]})$. Since $\Sigma'$ is Hausdorff and  connected, $z$ is not isolated in $\Sigma'$; hence, $\psi(z)$ is an accumulation point of zeroes of the analytic maps
\begin{align*}
	\phi_k\cp \alpha\cp \psi^{-1}|_{\psi(U\he\cap\he \Sigma)}-\phi_k\cp\alpha'\cp \psi^{-1}|_{\psi(U\he\cap\he \Sigma)}\qquad\text{for}\qquad k=1,\dots,\dim[N].
\end{align*}  
Then, $\alpha|_{U\he\cap\he\Sigma}=\alpha'|_{U\he\cap\he \Sigma}$ holds by analyticity, which shows that $\Sigma'$ is open in $\Sigma$.
\end{proof}
\begin{corollary}
\label{fdsdsffds}
Let $\Sigma,\Sigma'\in \Seg(S)$ be segments, and $\varrho,\varrho'\colon \Sigma\rightarrow \Sigma'$ analytic diffeomorphisms. Then, 
\begin{align*}
\varrho|_{C}=\varrho'|_{C}\quad\text{for}\quad \Seg(S)\ni C\subseteq \Sigma \qquad\quad \Longrightarrow\qquad\quad \varrho=\varrho'.
\end{align*}
\end{corollary}
\begin{corollary}
\label{aaaafsdfsfdfs}
We have the inclusion\hspace*{\fill}(\he recall the definition of $\overlap(S)$ in  \eqref{dslkjkjskjds})
\begin{align}
\label{stabiconji}
	\conj_{g}(G_S)\subseteq G_S\qquad\quad\forall\: g\in \overlap(S).
\end{align}
\end{corollary}
\begin{proof}
Let $q\in G_S$ and $g\in \overlap(S)$, and let $\OO,\OO'\subseteq \innt[S]$ be open segments with $g\cdot \iota(\OO)=\iota(\OO')$. We obtain
\begin{align*}
q\cdot \underbrace{(g^{-1}\cdot \iota(z'))}_{\in\he \iota(\OO)}\stackrel{q\:\in\: G_S}{=} g^{-1}\cdot \iota(z')\qquad\forall\:z'\in \OO'
\qquad\quad&\stackrel{\phantom{\text{Lemma } \ref{gffdg}}}{\Longrightarrow}\qquad\quad \conj_{g}(q)\cdot \iota|_{\OO'}=\iota|_{\OO'}\\[-10pt]
&\stackrel{\text{Lemma } \ref{gffdg}}{\Longrightarrow}\qquad\quad \conj_{g}(q)\in G_S,
\end{align*}
\vspace{-22pt}

\noindent
which proves the claim.
\end{proof}
\begin{corollary}
\label{dfdfdfdfd}
We have $G_C=G_S$ for each $C\in \Seg(S)$. 
\end{corollary}
\begin{proof}
	The inclusion $G_S\subseteq G_C$ is evident. For the other  inclusion, we fix $g\in G_C$ and set $\Sigma:=  S$. We define $\alpha:=g\cdot \iota$ as well as $\alpha':=\iota$, and obtain 
	\begin{align*}
		\alpha|_C=g\cdot \iota|_C\stackrel{g\he\in\he G_C}{=}\iota|_C=\alpha'|_C\quad\:\:\stackrel{\text{Lemma } \ref{gffdg}}{\Longrightarrow}\quad\:\: g\cdot \iota=\alpha|_\Sigma=\alpha'|_\Sigma=\iota \qquad\Longrightarrow\qquad g\in G_S,
	\end{align*}
which proves the claim.
\end{proof}
\noindent
Next, let $\varrho\colon \Seg(S)\ni \Sigma\rightarrow \Sigma'\in \Seg(S)$ be an analytic diffeomorphism (observe that then $\Sigma'=\im[\varrho]\in \Seg(S)$ is already enforced by $\Sigma\in \Seg(S)$). We have the following notions: 
\noindent
\begingroup
\setlength{\leftmargini}{11pt}
\begin{itemize}
\item
	An \textbf{extension of $\boldsymbol{\varrho}$} is an analytic diffeomorphism $\wt{\varrho}\colon \Seg(S)\ni\wt{\Sigma}\rightarrow \wt{\Sigma}'\in \Seg(S)$ with $\Sigma \subseteq \tilde{\Sigma}$ and $\wt{\varrho}|_\Sigma=\varrho$. If $\Sigma\subset \wt{\Sigma}$ holds, then $\wt{\varrho}$ is called a \textbf{proper extension of $\boldsymbol{\varrho}$}.
\begingroup
\setlength{\leftmarginii}{11pt}
\begin{itemize}
\item 
We denote the set of all extensions of $\varrho$  by $\EE(\varrho)$.
\item
	$\varrho$ is said to be \textbf{maximal} \defff $\varrho$ admits no proper extension.
\end{itemize}
\endgroup
\item
	An extension of $\varrho$ is called \textbf{maximal extension (of $\boldsymbol{\varrho}$)} \defff it admits no proper extension. If a maximal extension of $\varrho$ exist, then it is denoted by $\ovl{\varrho}$ (necessarily unique by Corollary \ref{fdsdsffds}). 	
\end{itemize}
\endgroup
\begin{lemma}
\label{kjdskjdskjdskjds}
If $S$ is homeomorphic to an interval, then each analytic diffeomorphism $\varrho\colon \Seg(S)\ni \Sigma\rightarrow \Sigma'\in \Seg(S)$ admits a unique maximal extension 
\vspace{-5pt}
\begin{align}
\label{dsjkkjdslkdsdshdsiudsiudsiuds9ds09ds09ds98ds98ds}
\begin{split}
&\hspace{65pt}\textstyle\ovl{\varrho}\colon \Seg(S)\ni\ovl{\Sigma}\rightarrow \ovl{\varrho}(\ovl{\Sigma})\in \Seg(S)\\[2pt]
&\hspace{105pt}\text{whereby}\\ 
&\textstyle\ovl{\Sigma}=\bigcup_{\wt{\varrho}\he\in\he \EE(\varrho)}\dom[\wt{\varrho}]\qquad\quad\wedge\qquad\quad \wt{\varrho}=\ovl{\varrho}|_{\dom[\wt{\varrho}]}\quad\:\:\forall\:  \wt{\varrho}\in \EE(\varrho).
\end{split}
\end{align} 
\end{lemma}
\begin{proof}
Clearly, $\ovl{\Sigma}:=\bigcup_{\wt{\varrho}\he\in\he \EE(\varrho)}\dom[\wt{\varrho}]$ is  a segment. 
We define $\ovl{\varrho}\colon \ovl{\Sigma}\rightarrow S$ by 
    	\begin{align*} 
    		\ovl{\varrho}(x):=\wt{\varrho}(x)\qquad\text{for}\qquad\wt{\varrho}\in \EE(\varrho)\qquad\text{with}\qquad x\in \dom[\wt{\varrho}].
    	\end{align*}
This is well defined by Corollary \ref{fdsdsffds}; because, if we are given $\wt{\varrho},\wt{\varrho}'\in \EE(\varrho)$ with $x\in \dom[\wt{\varrho}],\dom[\wt{\varrho}']$, then 
\begin{align*}
	\Sigma\subseteq \dom[\wt{\varrho}]\cap \dom[\wt{\varrho}']=:C\ni x \quad\:\:\stackrel{\text{Corollary }\ref{fdsdsffds}}{\Longrightarrow}\quad\:\: \wt{\varrho}|_C=\wt{\varrho}'|_C \qquad\Longrightarrow\qquad \wt{\varrho}(x)=\wt{\varrho}'(x),
\end{align*} 
whereby we have used in the first step that   
$C$ is connected (hence a segment) as $S$ is homeomorphic to an interval. It is straightforward to see that $\ovl{\varrho}$ is an extension of $\varrho$ (in particular analytic). Moreover, we obtain from the definition of $\ovl{\Sigma}$:
\begingroup
\setlength{\leftmargini}{11pt}
\begin{itemize}
\item
$\ovl{\varrho}$ is maximal; because, for $\wt{\varrho}\in \EE(\varrho)$, we have $\dom[\wt{\varrho}]\subseteq \ovl{\Sigma}$. Hence, $\wt{\varrho}=\ovl{\varrho}|_{\dom[\wt{\varrho}]}$ holds by Corollary \ref{fdsdsffds} applied to $C\equiv \Sigma$, $\varrho\equiv \wt{\varrho}$, and $\varrho'\equiv\ovl{\varrho}|_{\dom[\wt{\varrho}]}$.  
\item
If $\wt{\varrho}\in \EE(\varrho)$ is maximal, then we necessarily have $\dom[\wt{\varrho}]= \ovl{\Sigma}$. Hence, $\wt{\varrho}=\ovl{\varrho}$ holds by Corollary \ref{fdsdsffds} applied to $C\equiv \Sigma$, $\varrho\equiv \wt{\varrho}$, and $\varrho'\equiv \ovl{\varrho}$. 
\qedhere 
\end{itemize}
\endgroup
\end{proof}

\begin{proposition}
\label{nobound}
Assume that $S$ is homeomorphic to an interval; and let $\varrho\colon \Seg(S)\ni\LL\rightarrow \LL'\in \Seg(S)$ be an analytic diffeomorphism such that $g\cdot \iota|_\LL=\iota\cp \varrho$ holds for some $g\in G$. Let furthermore $\Sigma\in\Seg(S)$ be given with $\LL\subseteq \Sigma$. 
Then, $C:=\dom[\ovl{\varrho}]\cap \Sigma\in \Seg(S)$ holds  with $g\cdot \iota|_C=\iota\cp\ovl{\varrho}|_C$; and furthermore:
\begingroup
\setlength{\leftmargini}{14pt}
\begin{enumerate}
\item
\label{nobound1}
If $C\subset \Sigma$ holds, then $S\setm \ovl{\varrho}(C)$ is connected (possibly empty). 
\item
\label{nobound2}
If $\Sigma\setm C$ is  not connected, then $S=\ovl{\varrho}(C)$ holds.
\end{enumerate}
\endgroup
\end{proposition}
\begin{proof}
We have $\Seg(S)\ni \LL\subseteq C=\dom[\ovl{\varrho}]\cap \Sigma$ with $\dom[\ovl{\varrho}],\Sigma\in \Seg(S)$.  
\begingroup
\setlength{\leftmargini}{11pt}
\begin{itemize}
\item
Hence, $C\in \Seg(S)$ is a segment, because $S\cong \ID$ holds (via $\homeo\colon \ID\rightarrow S$).
\item
Then, Corollary \ref{fdsdsffds} shows $g\cdot \iota|_C=\iota\cp\ovl{\varrho}|_C$ (as we have $\Seg(S)\ni\LL\subseteq C \in \Seg(S)$).
\end{itemize}
\endgroup 
\noindent
We define $B:=\homeo^{-1}(C)\subseteq \ID$ as well as $B':=\homeo^{-1}(\ovl{\varrho}(C))\subseteq \ID$:
\begingroup
\setlength{\leftmargini}{14pt}
\begin{enumerate}
\item
Assume that $C\subset \Sigma$ holds; and let $x\in \partial[C]\cap \Sigma$ 
be as in Remark \ref{cnmnmcviufeiureiure}.\ref{cnmnmcviufeiureiure3}, hence  
	\begin{align}
	\label{nmnmcxnmcxnmcx}
	\UU\cap (C\setm \{x\})\neq \emptyset
	\neq    \UU\cap  (\Sigma\setm C)
	\:\:\:\:\text{holds for each neighbourhood}\:\:\:\UU\:\:\:\text{of}\:\:\:\: x.
	\end{align}  
Then, there exists a converging sequence $C\setm \{x\} \supseteq \{x_n\}_{n\in \NN}\rightarrow  x$ such that 
$\{\homeo^{-1}(x_n)\}_{n\in \NN}\subseteq B$ is monotonous. For each $n\in \NN$, we set\hspace*{\fill}(hence, $t_n'=(\homeo^{-1}\cp \ovl{\varrho})(x_n)$)
\begin{align*}
	x_n':=\ovl{\varrho}(x_n)\in \ovl{\varrho}(C)\qquad\quad\text{and}\qquad\quad t_n':=\homeo^{-1}(x_n')\in \homeo^{-1}(\ovl{\varrho}(C))=B'.
\end{align*}
Since $(\homeo^{-1}\cp \he\ovl{\varrho}\he\cp \homeo|_{B})|^{B'}\colon B\rightarrow B'$ is a homeomorphism (continuous injective), also     
$\{t'_n\}_{n\in \NN}\subseteq B'$ is monotonous. 
\vspace{1pt}

Assume now that $S\setm \ovl{\varrho}(C)$ is not connected (i.e.\ that the claim is wrong). Then, $\ID\setm B'$ is not connected; hence, $\lim_n \homeo^{-1}(x'_n)=\lim_n t_n'=t'\in \ID$ exists by monotonicity. We thus have $\lim_n x'_n=x':=\homeo(t')\in S$ by continuity of $\homeo$. 
Continuity of $\iota$, $g\cdot \iota$ (first implication) as well as injectivity of $\iota$, $g\cdot \iota$ together with $\{x_n\}_{n\in \NN}\subseteq C\setm \{x\}$ (second implication) yield 
\begin{align*}
	 g\cdot \iota(x_n)=(\iota\cp \ovl{\varrho})(x_n)=\iota(x_n')\quad\:\: \forall\: n\in \NN \qquad\Longrightarrow\qquad g\cdot \iota(x)=\iota(x')
	 \qquad\Longrightarrow\qquad \{x'_n\}_{n\in \NN}\subseteq S\setm \{x'\}.
\end{align*}
We fix charts $(U,\psi),(U',\psi')$ around $x,x'$, respectively,  
and pass to subsequences to assure $\{x_n\}_{n\in \NN}\subseteq U\setminus\{x\}$ and $\{x_n'\}_{n\in \NN}\subseteq U'\setminus\{x'\}$. We choose $\KK\subseteq U$, $\KK'\subseteq U'$, $N\in \NN$ as in Lemma \ref{lemma:BasicAnalytt1}.\ref{slkdlkdslksddsiusdiusiudds98sd98sd98s9dsdsds}; in particular, 
\begingroup
\setlength{\leftmarginii}{17pt}
\begin{itemize}
\item[i)]
$g\cdot\iota(\KK)=\iota(\KK')$ holds, and $\KK,\KK'\in \Seg(S)$ are compact.
\item[ii)]
$x\in \KK$, $x'\in \KK'$ holds, as well as $x_n\in \KK$, $x'_n\in \KK'$ for all $n\geq N$.
\item[iii)]
If $x\in \partial[S]$ or $x,x'\in \innt[S]$ holds, then $\KK$ is a neighbourhood of $x$. 
\end{itemize}
\endgroup 
\noindent 
Since $g\cdot \iota|_C=\iota\cp\ovl{\varrho}|_C$ holds, and 
since $\iota|_{\KK'}$, $g\cdot \iota|_\KK$ are embeddings ($\KK,\KK'$ compact by i)), we have the homeomorphisms
\begin{align*}
\alpha_1\equiv \iota ^{-1}\cp (g\cdot \iota)|_{C}\colon C\rightarrow \iota ^{-1}(g\cdot \iota(C))
\qquad\quad\text{and}\qquad\quad 
\alpha_2\equiv (\iota|_{\KK'})^{-1}\cp (g\cdot\iota|_\KK)\colon \KK\rightarrow \KK'.
\end{align*}  
Then, the claim follows once we have shown that 
$\KK\cap C\neq \emptyset\neq \KK\cap (\Sigma\setm C)$ holds:
\vspace{2pt} 

\noindent
{\it{}Proof of the Statement.}
In fact, $\KK\cap C\neq \emptyset\neq \KK\cap (\Sigma\setm C)$ implies 
$$
\qquad\tilde{C}:=\overbrace{(\KK\cup C)}^{\in\: \Seg(S)}\:\cap\: \Sigma \stackrel{S\he\cong\he \ID}{\in} \Seg(S)\qquad\:\:\text{with}\qquad\:\: C\subset \tilde{C} \subseteq \KK\cup C,
$$
and   
Lemma \ref{fdggfg}.\ref{fdggfg1} shows that $\tilde{\varrho}:=\iota ^{-1}\cp (g\cdot \iota)|_{\KK\he\cup\he C}$ is an analytic diffeomorphism onto its image; hence,  
\begin{align*}
g\cdot \iota|_\LL=\iota\cp \varrho\qquad\Longrightarrow\qquad \tilde{\varrho}|_\LL=\varrho \quad\:\: \stackrel{\text{Lemma }\ref{kjdskjdskjdskjds}}{\Longrightarrow} \quad\:\: \tilde{\varrho}|_{\tilde{C}}=\ovl{\varrho}|_{\tilde{C}} \qquad\Longrightarrow\qquad C\subset\tilde{C}\subseteq \dom[\ovl{\varrho}]\cap\Sigma,
\end{align*} 
which contradicts the definition of $C$. \hspace*{\fill}$\ddagger$
\vspace{2pt}

\noindent
Now, the following two situations can occur:
\begingroup
\setlength{\leftmarginii}{15pt}
\begin{itemize}
\item[\small\bf 1)]
We have $x\in \partial[S]$ or $x,x'\in \innt[S]$: 
\vspace{2pt}

Then, $\KK$ is a neighbourhood of $x$ by iii), so that \eqref{nmnmcxnmcxnmcx} yields $\KK\cap C\neq \emptyset\neq \KK\cap (\Sigma\setm C)$. 
\vspace{1pt}
\item[\small\bf 2)]
We have $x'\in \partial[S]=\partial_S$, hence $t'\in \partial_\ID$: 
\begingroup
\setlength{\leftmarginiii}{12pt}
\begin{itemize}
\item
Then, $x'\notin \ovl{\varrho}(C)$ holds, hence  $t'\notin B'$:
\vspace{4pt}

In fact, assume $x'\in \ovl{\varrho}(C)$, hence $t'\in B'$. Since $t'\in \partial_\ID$ holds by assumption, we must have $t'\in \partial_{B'}$ (as $B'\subseteq \ID$), hence $t'\in \partial_\ID\cap\partial_{B'}$. This, however, implies that $\ID\setm B'$ (hence $S\setm\ovl{\varrho}(C)$) is connected, which contradicts the assumptions. 
\vspace{2pt}
\item
Now, $x'\notin \ovl{\varrho}(C)$ together with continuity of $\ovl{\varrho}$ implies     
$x\notin C$, hence $\KK\cap (\Sigma\setminus C)\neq \emptyset$. Moreover, since $\{x_n\}_{n\geq N}\subseteq (\KK\cap C)$ holds by ii), we also have $\KK\cap C\neq \emptyset$.
\end{itemize}
\endgroup 
\end{itemize}
\endgroup
\item
Assume that $\Sigma\setm C$ is not connected (in particular $C\subset \Sigma$), and that $\A:=S\setm  \ovl{\varrho}(C)\neq \emptyset$ holds (i.e., that the claim is wrong). We observe the following:  
\begingroup
\setlength{\leftmarginii}{15pt}
\begin{itemize}
\item[a)]
$\A$ is connected by Part \ref{nobound1} (as $C\subset \Sigma$ holds); hence, $A:=\homeo^{-1}(\A)=\ID\setm B'$ is connected.
\item[b)]
Then, replacing $\homeo$ by $-\homeo$ if necessary, we  can assume that   
$$
\ID\ni\sup(\{s\:|\: s\in A\})=: \tau := \inf(\{s\:|\: s\in B'\})\in \ID\qquad \text{holds with}\qquad \ID\cap [\tau,\infty)=B'\cap [\tau,\infty).
$$
\vspace{-15pt}
\item[c)]
In particular, if $\{t'_n\}_{n\in \NN}\subseteq B'$ is  decreasing with $\lim_n t_n'=t'\in \RR$, then $t'\in \ID$ holds. In this case, we have the implication:
\begin{align}
\label{soidiodsoisiodiosdiooidsiodsds}	
	t'\in \partial_\ID\cap \partial_{B'}\qquad\Longrightarrow\qquad \ID=B' \qquad\Longleftrightarrow\qquad S=\ovl{\varrho}(C), 
\end{align}    
whereby the right side contradicts the assumption $\A=S\setm  \ovl{\varrho}(C)\neq \emptyset$.
\end{itemize}
\endgroup
Now, since $\Sigma\setm C$ is not connected,  Remark \ref{cnmnmcviufeiureiure}.\ref{cnmnmcviufeiureiure3b} shows  $\partial[C]\cap \Sigma=\{x^+,x^-\}$ with $x^+\neq x^-$ and $\homeo^{-1}(x^-)< \homeo^{-1}(x^+)$ (after relabelling if necessary) such that 
$$\UU^\pm\cap (C\setm\{x^\pm\})\neq \emptyset\neq \UU^\pm\cap  (\Sigma\setm C)
\:\:\:\text{holds for each neighbourhood}\:\:\:\UU^\pm\:\:\text{of}\:\:\: x^\pm.
$$
In particular, there exist sequences $\{x^\pm_n\}_{n\in \NN} \subseteq C\setm \{x^\pm\}$ with $\lim_n x_n^\pm=x^\pm$ such that 
\begingroup
\setlength{\leftmarginii}{11pt}
\begin{itemize}
\item[$\cp$]
$\{\homeo^{-1}(x^+_n)\}_{n\in \NN}$ is strictly increasing,
\item[$\cp$]
$\{\homeo^{-1}(x^-_n)\}_{n\in \NN}$ is strictly decreasing.
\end{itemize}
\endgroup
\noindent 
Since $\ovl{\varrho}$ is a homeomorphism, one of the sequences $\{(\homeo^{-1}\cp\ovl{\varrho})(x_n^\pm)\}_{n\in \NN}\subseteq B'$ 
is strictly decreasing. 
Hence,    
there exists $x\in \partial[C]\cap \Sigma$ such that 	\begin{align}
\label{nmnmcxnmcxnmcxa333}
	\UU\cap (C\setm \{x\})\neq \emptyset
	\neq    \UU\cap  (\Sigma\setm C)
	\:\:\:\:\text{holds for each neighbourhood}\:\:\:\UU\:\:\:\text{of}\:\:\: x
\end{align}
as well as a sequence $C\setm\{x\}\supseteq \{x_n\}_{n\in \NN}\rightarrow x$ such that the sequence
$$ 
\{t'_n\}_{n\in \NN}\subseteq B'\quad\:\:\text{with}\quad\:\: 
t_n':= \homeo^{-1}(x'_n)\quad\:\: 
\text{for}\quad\:\: x_n':=\varrho(x_n)\quad\:\:\text{for each}\quad\:\:  n\in \NN
$$ 
is strictly decreasing. Then, $\{t'_n\}_{n\in \NN}$   
converges to some $t'\in \ID$  by c), with $\{t'_n\}_{n\in \NN}\subseteq \ID\setminus\{t'\}$. Hence, $\{x'_n\}_{n\in \NN}$ converges to $x':=\homeo(t')\in S$ with $\{x'_n\}_{n\in \NN} \subseteq S\setminus \{x\}$.  
The rest of the argumentation now very similar to that   in Part \ref{nobound1}: 
\vspace{2pt}

We fix charts $(U,\psi),(U',\psi')$ around $x,x'$, respectively,  
and pass to subsequences to assure $\{x_n\}_{n\in \NN}\subseteq U\setminus\{x\}$ and $\{x_n'\}_{n\in \NN}\subseteq U'\setminus\{x'\}$. We choose $\KK\subseteq U$, $\KK'\subseteq U'$, $N\in \NN$ as in Lemma \ref{lemma:BasicAnalytt1}.\ref{slkdlkdslksddsiusdiusiudds98sd98sd98s9dsdsds}; so that, in particular, i), ii), iii) hold.  Then, we have the homeomorphism
$$(\iota|_{\KK'})^{-1}\cp (g\cdot\iota|_\KK)\colon \KK\rightarrow \KK',$$  
and the claim follows once we have shown   
$\KK\cap C\neq \emptyset\neq \KK\cap (\Sigma\setm C)$ (same argument as in Part \ref{nobound1}).   
Now, the following two situations can occur:
\begingroup
\setlength{\leftmarginii}{17.5pt}
\begin{itemize}
\item[\small\bf 1')]
We have $x\in \partial[S]$ or $x,x'\in \innt[S]$: 
\vspace{2pt}

Then, $\KK$ is a neighbourhood of $x$ by iii), so that \eqref{nmnmcxnmcxnmcxa333} yields $\KK\cap C\neq \emptyset\neq \KK\cap (\Sigma\setm C)$. 
\vspace{1pt}
\item[\small\bf 2')]
We have $x'\in \partial[S]=\partial_S$, hence $t'\in \partial_\ID$:
\begingroup
\setlength{\leftmarginiii}{12pt}
\begin{itemize}
\item
Then, $x'\notin \ovl{\varrho}(C)$ holds, hence  $t'\notin B'$: 
\vspace{4pt}

In fact, assume $x'\in \ovl{\varrho}(C)$, hence $t'\in B'$. Since $t'\in \partial_\ID$ holds by assumption, we must have $t'\in \partial_{B'}$ (as $B'\subseteq \ID$), hence $t'\in \partial_\ID\cap\partial_{B'}$. Then,  \eqref{soidiodsoisiodiosdiooidsiodsds} shows $S=\ovl{\varrho}(C)$, which contradicts the assumptions.
\vspace{-10pt}
\item
Now, $x'\notin \ovl{\varrho}(C)$ together with continuity of $\ovl{\varrho}$ implies     
$x\notin C$, hence $\KK\cap (\Sigma\setminus C)\neq \emptyset$. Moreover, since $\{x_n\}_{n\geq N}\subseteq (\KK\cap C)$ holds by ii), we also have $\KK\cap C\neq \emptyset$.\qedhere
\end{itemize}
\endgroup 
\end{itemize}
\endgroup  
\end{enumerate}
\endgroup
\end{proof}

\subsection{Free Segments}
In this section, let $\wm\colon G\times M\rightarrow M$ be a Lie group action that is analytic in $M$, and let $(S,\iota)$ denote an analytic 1-submanifold of $M$.
\vspace{6pt}

\noindent
An analytic 1-submanifold $(S,\iota)$ of $M$ is said to be {\bf free} \defff it admits a {\bf free segment}, i.e., a 
  segment $\Sigma\in \Seg(S)$ such that the following equivalence holds:
\begin{align}
\label{kfdkdfklfd}
	g\cdot \iota|_\Sigma\cpsim \iota|_\Sigma\quad\text{for}\quad g\in G\qquad\:\:\Longleftrightarrow\qquad\:\:  g\cdot \iota|_\Sigma=\iota|_\Sigma\qquad\big(\:\stackrel{\text{\rm Lemma }\ref{gffdg}}{\Longleftrightarrow}\quad\:\: g\in G_S\:\he\big).
\end{align}
\noindent
\emph{(Notably, by Corollary \ref{dfdsasasasassa}, the right side of \eqref{kfdkdfklfd} implies the left side of \eqref{kfdkdfklfd}.)}  
\begingroup
\setlength{\leftmargini}{11pt}
\begin{itemize}
\item
The set of all free segments (in $S$) is denoted by $\Free(S)\subseteq \Seg(S)$. 
\item
For $S'\in \Seg(S)$, we let $\Free(S')$ denote the set of all free segments in $(S',\iota|_{S'})$. 
Corollary \ref{dfdfdfdfd} yields:
\begin{align}
\label{jdkjfdkjfddf}
\Free(S')=\{\Sigma\in \Free(S)\:|\: \Sigma\subseteq S'\}\qquad\:\text{as well as}\qquad\: \GESS=\GES.\qquad\qquad\:
\end{align}
\item
For $S'\in \Seg(S)$, we let $\MFree(S')$ denote the set of all maximal segments in $(S',\iota|_{S'})$. We have the implication
\begin{align}
\label{hggffdfdxcxfdtrtrtrtrzt7678878787878787878787}
	\MFree(S)\ni \Sigma \subseteq S'\qquad\quad\stackrel{\eqref{jdkjfdkjfddf}}{\Longrightarrow}\qquad\quad \Sigma\in \MFree(S').
\end{align}
\end{itemize}
\endgroup 
\begin{lemma}
\label{jdkjfdkjfd}
$\Seg(S)\ni \Sigma'\subseteq \Sigma\in \Free(S)$ implies $\Sigma'\in \Free(S)$.  
\end{lemma}
\begin{proof}
For $g\in G$, we have
\vspace{-5pt}
\[
	g\cdot \iota|_{\Sigma'}\cpsim \iota|_{\Sigma'}\qquad\Longrightarrow\qquad  g\cdot \iota|_\Sigma\cpsim \iota|_\Sigma
\qquad\stackrel{\eqref{kfdkdfklfd}}{\Longrightarrow}\qquad g\in G_S \qquad\Longrightarrow\qquad g\cdot \iota|_{\Sigma'}=\iota|_{\Sigma'}.\qedhere
\]
\end{proof}
\noindent
A free segment $\Sigma\in \Free(S)$ is said to be \textbf{maximal} 
\defff 
\begin{align*}
	\Sigma\subseteq \Sigma'\in \Free(S)\qquad\Longrightarrow\qquad \Sigma=\Sigma'.
\end{align*}
The set of all maximal free segments (in $S$) is denoted by $\MFree(S)$. 
\begin{lemma}
\label{kjkjdskjkjdsaassaa}
If $\Sigma \in \Free(S)$, then $\clos[\Sigma]\in \Free(S)$. In particular, 
each $\Sigma'\in \MFree(S)$ is closed in $S$.  
\end{lemma}
\begin{proof}
For $g\in G$, we have
\[
	g\cdot \iota|_{\clos[\Sigma]}\cpsim \iota|_{\clos[\Sigma]}
	\qquad\stackrel{\eqref{maxicp}}{\Longrightarrow}\qquad  g\cdot \iota|_\Sigma\cpsim \iota|_\Sigma
\qquad\stackrel{\eqref{kfdkdfklfd}}{\Longrightarrow}\qquad g\in G_S \qquad\Longrightarrow\qquad g\cdot \iota|_{\clos[\Sigma]}=\iota|_{\clos[\Sigma]}.\qedhere
\]
\end{proof}
\begin{lemma}
\label{fhdhh}
For each $\Sigma\in \Free(S)$, there exists $\Sigma'\in \MFree(S)$ with $\Sigma\subseteq \Sigma'$. 
\end{lemma}
\begin{proof}
	Let $\MK:=\{C\in \Free(S)\:|\: \Sigma\subseteq C\}$  
be partially ordered by inclusion (observe $\Sigma
\in\MK\neq \emptyset$).  Then, a given chain $\ML$ in $\MK$ has the upper bound 
	$\textstyle \mathrm{B}(\ML):= \bigcup_{C\in \ML} C$,
	 so that the set of maximal elements in $\MK$ (i.e.. the set $\MFree(S) \cap \MK$) is non-empty by Zorn's lemma: 
\vspace{4pt}

\noindent	 
In fact, to see that $\mathrm{B}(\ML)$ is an upper bound of $\ML$, first observe that $\mathrm{B}(\ML)\in \Seg(S)$ holds (as $\Sigma\subseteq C$ holds for each $C\in \ML$ by definition). Then, $\mathrm{B}(\ML)$ is a free segment (i.e. $\mathrm{B}(\ML)\in \Free(S)$), because we have
\vspace{-6pt}
\begin{align*}
	g\cdot \iota|_{\mathrm{B}(\ML)} \cpsim \iota|_{\mathrm{B}(\ML)}\qquad\Longrightarrow\qquad g\cdot \iota|_{C'} \cpsim   \iota|_{C'}\:\:\:\text{for some}\:\:\: C'\in \ML \quad\:\:\stackrel{C'\he\in\he \Free(S)}{\Longrightarrow}\quad\:\:  g\in G_S,
\end{align*}
whereby the first implication is obtained as follows:
\begingroup
\setlength{\leftmargini}{12pt}
\begin{itemize} 
\item
By assumption, there exist open segments $\Seg(S)\ni \OO,\OO'\subseteq \mathrm{B}(\ML)$ with $g\cdot \iota(\OO) = \iota(\OO')$ such that $g\cdot \iota|_\OO$, $\iota|_{\OO'}$ are embeddings.
\item
According to the definition of $\mathrm{B}(\ML)$, there exists some $C\in \ML$ with $C\cap \OO\neq \emptyset$. 
Then, by Remark \ref{cnmnmcviufeiureiure}.\ref{cnmnmcviufeiureiure1} and the embedding properties of $g\cdot \iota|_\OO$, $\iota|_{\OO'}$, there exist  
 open segments 
$\Seg(S)\ni \UU\subseteq C\cap \OO$ and $\Seg(S)\ni\UU'\subseteq \OO'$ such that  $g\cdot  \iota(\UU)=\iota(\UU')$ holds. 
\item
According to the definition of $\mathrm{B}(\ML)$, there exists some $C'\in \ML$ with $C\subseteq C'$ (hence $\UU\subseteq C\subseteq C'$) such that  $C'\cap \UU'\neq \emptyset$ holds (observe $\UU'\subseteq \OO'\subseteq \mathrm{B}(\ML)$). Then, by Remark \ref{cnmnmcviufeiureiure}.\ref{cnmnmcviufeiureiure1} and the embedding properties of $g\cdot \iota|_\OO$, $\iota|_{\OO'}$, 
there 
exist open segments $\wt{\UU}\subseteq \UU\subseteq C'$ and $\wt{\UU}'\subseteq C'\cap \UU'$ with $g\cdot  \iota(\wt{\UU})=\iota(\wt{\UU}')$. This shows $g\cdot \iota|_{C'}\cpsim\iota|_{C'}$, because $g\cdot \iota|_{\wt{\UU}}$ and $\iota|_{\wt{\UU}'}$ are embeddings (as  $g\cdot \iota|_\OO$, $\iota|_{\OO'}$ are embeddings).
\qedhere
\end{itemize}
\endgroup	
\end{proof}
\begin{lemma}
\label{ghfhh}
Let $\Sigma,\Sigma'\in \Seg(S)$ and $g\in G$ be given such that   
$g\cdot \iota|_\Sigma=\iota|_{\Sigma'}\cp\varrho$ holds for an 
analytic diffeomorphism $\varrho\colon \Sigma\rightarrow\Sigma'$. Then, $\Sigma\in \Free(S)$ implies $\Sigma'\in \Free(S)$. Moreover,    
 if $\Sigma$ (hence, $\Sigma'$) is compact with $\Sigma\subseteq\innt[S]$, then $\Sigma\in \MFree(S)$ implies $\Sigma'\in \MFree(S)$. 
\end{lemma}
\begin{proof}
For the first statement, let $\Sigma\in \Free(S)$ and $q\in G$. Then,\footnote{Since $g\in \overlap(S)$ holds, the last three implications can also be omitted by applying Corollary \ref{aaaafsdfsfdfs} (which directly yields $q\in G_S$).}
\begin{align*}
	q\cdot \iota|_{\Sigma'}\cpsim \iota|_{\Sigma'}\qquad&\Longrightarrow\qquad q\cdot (\iota|_{\Sigma'}\cp\varrho)\cpsim \iota|_{\Sigma'}\cp\varrho\quad\hspace{7.8pt}\Longrightarrow\qquad q\cdot (g\cdot \iota|_\Sigma)\cpsim (g\cdot \iota|_\Sigma)\\
	&\Longrightarrow \qquad (g^{-1}\cdot q\cdot g)\cdot \iota|_\Sigma\cpsim \iota|_\Sigma\quad\hspace{7pt}\Longrightarrow\qquad (g^{-1}\cdot q\cdot g)\in G_S\\
	&\Longrightarrow\qquad (q\cdot g)\cdot \iota|_{\Sigma}=g\cdot \iota|_{\Sigma}\qquad\hspace{12.5pt}\Longrightarrow\hspace{19.9pt} q\cdot (g\cdot (\iota|_{\Sigma}\cp \varrho^{-1}))=g\cdot (\iota|_{\Sigma}\cp \varrho^{-1})\\
	&\Longrightarrow\qquad q\cdot \iota|_{\Sigma'}=\iota|_{\Sigma'}.
\end{align*}
For the second statement, let $\innt[S]\supseteq \Sigma\in \MFree(S)$ be compact. Then, $\Sigma'=\varrho(\Sigma)\in \Free(S)$ holds by the first part, and $\Sigma'$ is compact by continuity of $\varrho$. We distinguish between the cases $\Sigma=\innt[S]$ and $\Sigma\subset\innt[S]$:
\begingroup
\setlength{\leftmargini}{18pt}
{
\renewcommand{\theenumi}{\small{\bf(\alph{enumi})}} 
\renewcommand{\labelenumi}{\theenumi}
\begin{enumerate}
\item
Assume $\Sigma=\innt[S]$:\: Since $\Sigma$ is compact, we necessarily have $\UE\cong S=\innt[S]=\Sigma$, and $g\cdot \iota|_\Sigma=\iota|_{\Sigma'}\cp\varrho$ implies  
\vspace{-4pt}
\begin{align*}
g\cdot \iota\cpsim \iota\quad\:\:\stackrel{\text{Lemma }\ref{compnobound}}{\Longrightarrow}\quad\:\:	\iota(S)=g\cdot \iota(S)=g\cdot \iota(\Sigma)=\iota(\varrho(\Sigma))=\iota(\Sigma')\quad\:\:\stackrel{\iota\text{ injective}}{\Longrightarrow}\quad\:\: \Sigma'=S=\Sigma\in \MFree(S).
\end{align*}
\item
Assume $\Sigma\subset \innt[S]$:\: We choose segments $\Sigma\subset \UU\subset \LL\subset\innt[S]$ as in Remark \ref{cnmnmcviufeiureiure}.\ref{cnmnmcviufeiureiure4} (with $\KK\equiv \Sigma$ there), and assume that the claim is wrong, i.e., that $\Sigma'$ is not maximal. Then, there exists some $\LL'\in \Free(S)$ with $\Sigma'\subset \LL'$. By Lemma \ref{kjkjdskjkjdsaassaa}, we can assume that $\LL'$ is closed in $S$; in particular:
\begingroup
\setlength{\leftmarginii}{17pt}
\begin{itemize} 
\item[($\star$)]
If  $S\cong \UE$ holds, then $\LL'$ is compact (as $\LL'$ in closed in $S$ compact), with $\LL'\subset S=\innt[S]$ (since elsewise  $S=\LL'\in \Free(S)$   
holds, 
which contradicts $\MFree(S)\ni \Sigma \subset \innt[S]= S$).
\end{itemize}
\endgroup
\noindent
We observe the following:
\vspace{-2pt}
\begingroup
\setlength{\leftmarginii}{12pt}
\begin{itemize} 
\item
Since $\Sigma'\subset \LL'$, Remark \ref{cnmnmcviufeiureiure}.\ref{cnmnmcviufeiureiure3c} (with $\KK\equiv \Sigma'$ there) yields some 
$x'\in \partial_{\Sigma'}\cap\innt[\LL']\subseteq \innt[S]$  such that  
\begin{align}
\label{nmdsnmnmddsnmnmdsdsnm}
	\UU'\cap (\Sigma'\setm\{x'\})\neq \emptyset\neq  
	\UU'\cap  (\LL'\setm \Sigma')\:\:\:\text{holds for each neighbourhood}\:\:\:\UU'\:\:\text{of}\:\:\: x'.
\end{align}
Then, $x:=\varrho^{-1}(x')\in \partial_\Sigma\cap \innt[S]$ holds; because, (by definition) $\varrho$ extends to an analytic diffeomorphism (in particular, a homeomorphism) defined on an open subset that contains (the compact segment) $\Sigma$. 
\item
Let $(U,\psi),(U',\psi')$ be charts around $x,x'$. 
Since we have $g\cdot \iota|_\Sigma=\iota|_{\Sigma'}\cp\varrho$, $x\in \partial_\Sigma$, $x'\in \partial_{\Sigma'}$,  there exist sequences that fulfill the assumptions in Lemma \ref{lemma:BasicAnalytt1}. Since $x,x'\in \innt[S]$ holds, Lemma \ref{lemma:BasicAnalytt1}.\ref{lemma:BasicAnalytt123} provides connected compact neighbourhoods $\KK,\KK'$ of $x,x'$, respectively    
such that  $g\cdot \iota(\KK)=\iota(\KK')$ holds. Hence,   
$\alpha_2:=(\iota|_{\KK'})^{-1}\cp(g\cdot \iota)|_\KK\colon \KK\rightarrow \KK'$ is a homeomorphism (by compactness of $\KK,\KK'$). 
\begingroup
\setlength{\leftmarginiii}{12pt}
\begin{itemize} 
\item
Since $x\in \Sigma\subset \UU\subset \LL$ holds with $\UU$ open, we can shrink $\KK$ around $x$ (and $\KK'$ around $x'$) such that additionally $\KK\subseteq \LL$ holds, hence $\hat{\Sigma}:=\KK\cup\Sigma\subseteq \LL$. 
\item
Since $x'\in \innt[\LL']$ holds, we can shrink $\KK'$ around $x'$ (and $\KK$ around $x$) such that additionally $\KK'\subseteq \LL'$ holds, hence $\hat{\Sigma}':=\KK'\cup\Sigma'\subseteq \LL'$.
\end{itemize}
\endgroup
\item
Set $\alpha_1:=\varrho$. Then,  Lemma \ref{fdggfg} shows that \begin{align*}
	\tilde{\varrho}:=(\iota|_{\hat{\Sigma}'})^{-1}\cp (g\cdot \iota)|_{\hat{\Sigma}}\colon \hat{\Sigma}\rightarrow \hat{\Sigma}'
\end{align*}
is an analytic diffeomorphism; because, in the case $S\cong\UE$ the condition in Lemma \ref{fdggfg}.\ref{fdggfg2} is fulfilled by construction (recall $\LL,\LL'\subset\innt[S]\subseteq S$). Notably, the definitions imply $\varrho=\tilde{\varrho}|_{\Sigma}$. 
\end{itemize}
\endgroup
\noindent
The right side of \eqref{nmdsnmnmddsnmnmdsdsnm} (with $\UU'\equiv \KK'$ there) shows     
 $\Sigma'\subset\hat{\Sigma}'$. Moreover, since $\hat{\Sigma}'\subseteq \LL'\in \Free(S)$ holds, Lemma \ref{jdkjfdkjfd} shows  
$\hat{\Sigma}'\in \Free(S)$.
Then, applying the first part to $\tilde{\varrho}^{-1}\colon \hat{\Sigma}'\rightarrow \hat{\Sigma}$, we obtain
\begin{align*}
	\hat{\Sigma}=\tilde{\varrho}^{-1}(\hat{\Sigma}')\in \Free(S)\qquad\text{with}\qquad \Sigma=\varrho^{-1}(\Sigma')=\tilde{\varrho}^{-1}(\Sigma')\subset \tilde{\varrho}^{-1}(\hat{\Sigma}')=\hat{\Sigma}\qquad\text{as}\qquad \Sigma'\subset\hat{\Sigma}',
\end{align*} 
which contradicts that $\Sigma$ is maximal.\qedhere
\end{enumerate}}
\endgroup
\end{proof}
\noindent
We now finally apply Lemma 37 in \cite{Deco} to prove the following statement:
\begin{lemma}
\label{dsdssddsds}
Let $\Free(S)\ni\Sigma\subset S$ be given; and let $z\in \partial_\Sigma\cap\innt[S]$ be a boundary point of $\Sigma$.   
Then, the following assertions hold:
\begingroup
\setlength{\leftmargini}{15pt}
{
\renewcommand{\theenumi}{{\alph{enumi}})} 
\renewcommand{\labelenumi}{\theenumi}
\begin{enumerate}
\item
\label{dsdssddsds1}
There exists a chart $(V,\chi)$ of $S$ around $z$ with\hspace*{\fill}(\:$\RR\ni i'<i\in \RR$\he)
\begin{align}
\label{iudiufdiufdifdopgfpogpogfg}
\begin{split}	
&\chi(z)=0,\quad\:\: 
 \chi(V)\equiv(i',i),\quad\:\:     
 \chi(V\cap \Sigma)=(i',0],\quad\:\: \chi(V\setm \Sigma)=(0,i), \quad\:\: \tilde{S}:=\Sigma\cup V\subset S,
\end{split}
\end{align} 
as well as a a strictly decreasing sequence $(0,i)\supseteq \{\tau_n\}_{n\in \NN}\rightarrow 0$ with 
\begin{align*}
\Sigma'_n:=\chi^{-1}([0,\tau_n])\in \Free(S)\qquad\quad\forall\: n\in \NN.
\end{align*}
\vspace{-17pt}
\item
\label{dsdssddsds2}
For each $g\in G\setm G_S$, we have the  implication:
\begin{align}
\label{pkdslkdslkdslklkdslkdskldssddsds454545}
	g\cdot \iota|_{\Sigma\:\cup\: \Sigma'_n}\cpsim  \iota|_{\Sigma\:\cup\: \Sigma'_n}\qquad\quad\Longrightarrow\qquad\quad g\cdot \iota|_{\Sigma}\cpsim  \iota|_{\Sigma'_n}\qquad\vee\qquad g\cdot \iota|_{\Sigma'_n}\cpsim  \iota|_{\Sigma}.
\end{align} 
\end{enumerate}}
\endgroup
\end{lemma}
\noindent
For the proof of Lemma \ref{dsdssddsds}, we need some facts and definitions from \cite{Deco}:
\begingroup
\setlength{\leftmargini}{15pt}
{
\renewcommand{\theenumi}{\small{\bf\alph{enumi})}} 
\renewcommand{\labelenumi}{\theenumi}
\begin{enumerate}
\item
\label{nvdnmorpore1}
We recall the following definitions:\hspace*{\fill}(Sect.\ 2.3.2, Definition 2, and Definition 5  in \cite{Deco})
\begingroup
\setlength{\leftmarginii}{11pt}
\begin{itemize} 
\item 
We write $\gamma\cpsim \gamma'$ for analytic immersions $\gamma\colon D\rightarrow M$ and $\gamma'\colon D'\rightarrow M$ \defff $\gamma(J)=\gamma'(J')$ holds for open intervals $J\subseteq D$ and $J'\subseteq D'$ on which $\gamma$ and $\gamma'$ are embeddings, respectively. 
\item
An analytic immersion $\gamma\colon D\rightarrow M$ is said to be a free segment \defff the following equivalence holds:
	\begin{align*}
		g\cdot\gamma\cpsim \gamma\quad\text{for}\quad g\in G\qquad\quad\Longleftrightarrow\qquad\quad g\in G_{\gamma}:=\{h\in G\:|\: h\cdot \gamma=\gamma\}.
	\end{align*}
	We recall that $G_\gamma=G_{\gamma|_{D'}}$ holds for each interval $D'\subseteq D$ (see Lemma 14 in \cite{Deco}).
\end{itemize}
\endgroup
\item
\label{nvdnmorpore2}
	Let $(U,\psi)$ be a chart of $S$ around some $z\in \innt[S]$ such that $\gamma\equiv \gamma_\psi\colon I_\psi\rightarrow M$ ($I_\psi\subseteq \RR$ an open interval) is an analytic embedding (Corollary \ref{dfdsasasasassa}):
\begingroup
\setlength{\leftmarginii}{11pt}
\begin{itemize} 
\item 
	 $\gamma_\psi|_J$ is an analytic embedding for each open interval $J\subseteq I$, hence $\iota|_C=\gamma_\psi\cp\psi|_C$ is an analytic embedding for each $U\supseteq C\in \Seg(S)$. 
\item	 
	 Let $D\subseteq I_\psi$ and $C:=\psi^{-1}(D)$. By the previous point, we have the equivalence
\begin{align*}
	g\cdot \gamma_\psi|_D\cpsim \gamma_\psi|_D\qquad\quad\Longleftrightarrow\qquad\quad  g\cdot \iota|_C\cpsim \iota|_C
\end{align*}
for each $g\in G$. 
Since $G_S=G_C=G_{\gamma_\psi|_D}=G_{\gamma_\psi}$ holds by Corollary \ref{dfdfdfdfd} and \ref{nvdnmorpore1}, we see that $C\in \Free(S)$ is equivalent to that ${\gamma_\psi}|_D$ is a free segment (in the sense defined in \ref{nvdnmorpore1}). In particular, if $\gamma|_D$ is a free segment, then $\gamma|_{D'}$ is a free segment for each interval $D'\subseteq D$ (by Lemma \ref{jdkjfdkjfd}).
\end{itemize}
\endgroup
\end{enumerate}}
\endgroup
\noindent
\begin{proof}[Proof of Lemma \ref{dsdssddsds}]
\begingroup
\setlength{\leftmargini}{15pt}
{
\renewcommand{\theenumi}{{\alph{enumi}})} 
\renewcommand{\labelenumi}{\theenumi}
\begin{enumerate}
\item
Since $\partial_\Sigma\ni z\in \innt[S]$ holds, there exists a submanifold chart $(V,\chi)$ of $\Sigma$ that is centered at $z$ such that  $\gamma\equiv \gamma_\chi\colon I_\chi\equiv (i',i)\rightarrow M$ is an analytic embedding. Replacing $\chi$ by $-\chi$, and shrinking $I_\chi$, we can assume that  \eqref{iudiufdiufdifdopgfpogpogfg} holds. Let now $i'<\ell'<0<\ell<i$ be fixed:
\vspace{-3pt}
\begingroup
\setlength{\leftmarginii}{11pt}
\begin{itemize}
\item 
Since $\chi^{-1}([\ell',0])\subseteq \Sigma\in \Free(S)$ holds, $\gamma_\chi|_{[\ell',0]}$ is a free segment by \ref{nvdnmorpore2}. 
\vspace{1pt}
\item
Lemma 37 in \cite{Deco} provides\footnote{For the assumptions made in Lemma 37 in \cite{Deco}, see Definition 1, Convention 3 and Definition 10 in \cite{Deco}.} some $0<k\leq \ell$ such that $\gamma_\chi|_{[0,k]}$ is a free segment; hence, that $\Sigma':=\chi^{-1}([0,k])\in \Free(S)$ holds by  \ref{nvdnmorpore2}. We set $\tau_n:=k/(n+1)$ for all $n\in \NN$. 
\end{itemize}
\endgroup
Then, Lemma \ref{jdkjfdkjfd} shows   that $\Sigma'_n=\chi^{-1}([0,\tau_n])\in \Free(S)$  holds  (observe $\Sigma'_n\subseteq\Sigma'$) for each $n\in \NN$. 
\item
Let $g\in G\setm G_S$ and $n\in \NN$ be such that the left side of \eqref{pkdslkdslkdslklkdslkdskldssddsds454545} holds, i.e., there exist open segments $\Seg(S)\ni \OO,\OO'\subseteq \Sigma_n$ for $\Sigma_n:=\Sigma\cup\Sigma'_n\in \Seg(S)$ such that $g\cdot \iota(\OO)=\iota(\OO')$ holds and  $g\cdot \iota|_{\OO}$, $\iota|_{\OO'}$ are embeddings. Since $\OO\subseteq \Sigma_n$ holds, we have $\OO\cap \Sigma
\neq \emptyset$ or $\OO\cap \Sigma_n'\neq \emptyset$:  
\begingroup
\setlength{\leftmarginii}{12pt}
\begin{itemize}
\item
	Assume $\OO\cap \Sigma\neq \emptyset$.  Since $g\cdot \iota|_{\OO}$, $\iota|_{\OO'}$ are embeddings, by Remark \ref{cnmnmcviufeiureiure}.\ref{cnmnmcviufeiureiure1} we can shrink $\OO,\OO'$ such that $\OO\subseteq \Sigma$ holds. If $\OO'\cap \Sigma\neq \emptyset$ holds, then Remark \ref{cnmnmcviufeiureiure}.\ref{cnmnmcviufeiureiure1} implies  $g\cdot \iota|_\Sigma\cpsim \iota|_\Sigma$. This,   however,  contradicts $g\in G\setm G_S$, as we have $\Sigma\in \Free(S)$. Consequently, $\OO'\subseteq \Sigma'_n$ holds, hence $g\cdot \iota|_\Sigma\cpsim \iota|_{\Sigma'_n}$.
\item
	Assume $\OO\cap \Sigma'_n\neq \emptyset$. Since $\Sigma'_n\in \Free(S)$ holds, 
	 an analogous argumentation as in the previous point shows $g\cdot \iota|_{\Sigma'_n}\cpsim \iota|_{\Sigma}$.\qedhere
\end{itemize}
\endgroup
\end{enumerate}}
\endgroup
\end{proof}

\section{Decompositions}
\label{oisdffiodusfdiu}
In this section, $\wm\colon G\times M\rightarrow M$ denotes a non-contractive Lie group action, and 
$(S,\iota)$ denotes a free analytic 1-submanifold of $M$ with $S\notin \Free(S)$ ($S$ admits a free segment but is not a free segment by itself). 
In this section, we show that $S$ naturally decomposes into maximal free segments that are mutually and uniquely related by the Lie group action. The explicit form of the decomposition depends on whether $S$ admits a compact maximal segment or not. In the first case, one additionally has to distinguish between the positive and the negative case. Simply put:
\begingroup
\setlength{\leftmargini}{11pt}
\begin{itemize}
\item
If $S\cong\UE$ holds, then $S$ admits a compact maximal segment, just because    
each $\Sigma\in \MFree(S)$ is necessarily compact as closed in $S$ by Lemma \ref{kjkjdskjkjdsaassaa} (observe that $\MFree(S)\neq \emptyset$ holds by Lemma \ref{fhdhh}, as $\Free(S)\neq \emptyset$ holds by assumption). Then, $S$ is build up finitely many translates of $\Sigma$ (positive/negative case can occur).
 \item
Assume that $S\cong \ID$ holds. If $S$ admits a compact maximal segment, then $S$ decomposes into countably many translates of $\Sigma$ (positive/negative case can occur). In the other case ($S$ admits no compact maximal segment), $S$ decomposes into two maximal segments (the only existing ones in this case) that are pinned together at their common boundary point (and are related to each other via the group action).
\end{itemize}
\endgroup
\subsection{Compact Maximal Segments (elementary Facts and Definitions)} 
\label{nmnbvccnnvcncvnmcncv}
In this section, we discuss the elementary properties of compact maximal segments. 
Sect.\ \ref{sdssddsdsddsdscxcxxs} contains some technical statements, and in Sect.\ \ref{lkdslkdslkdslkdslkdskldsds8s98d98dsdsds} we introduce the notion of a positive\slash negative segment. 
\subsubsection{Some Technical Preparation}
\label{sdssddsdsddsdscxcxxs}

\begin{lemma}
\label{mfdkjdkjdfjkf}
Let $\Sigma\in \Free(S)$, $C,C'\in \Seg(S)$ be given with $C\subseteq \Sigma$. Assume that $g\cdot \iota|_C=\iota\cp\varrho$ holds for some $g\in G\setm G_S$ and an analytic diffeomorphism $\varrho\colon C\rightarrow C'$. Then,   
$\Sigma\cap \varrho(C)= \partial_\Sigma\cap \partial_{\varrho(C)}$ holds, hence $\varrho(C)\subset S$.
\end{lemma}
\begin{proof}
If the claim is wrong, then Remark \ref{cnmnmcviufeiureiure}.\ref{cnmnmcviufeiureiure5} provides $ \OO'\in \Seg(S)$ open with $\OO'\subseteq \Sigma\cap \varrho(C)$. Then, $\Sigma\supseteq \OO:=\varrho^{-1}(\OO')$ holds; and we obtain 
\vspace{-6pt}
\begin{align*}
	g\cdot \iota|_{\OO}=\iota\cp\varrho|_\OO\qquad\Longrightarrow\qquad g\cdot \iota|_\Sigma\cpsim \iota|_\Sigma\qquad\stackrel{\Sigma\he\in\he \Free(S)}{\Longrightarrow}\qquad g\in G_S,
\end{align*}
which contradicts the choices. 
\end{proof}
\begin{remark}
\label{oidsoidsoidsoidsoidsoidsdsdsds454545remk}
Let $\Sigma  \in \Free(S)$, 
$\LL,\LL'\in \Seg(S)$ be given with $\LL\subseteq \Sigma$. Assume that $g\cdot \iota|_\LL=\iota\cp\varrho$ holds for some $g\in G\setm G_S$ and an analytic diffeomorphism $\varrho\colon \LL\rightarrow \LL'$. 
\begingroup
\setlength{\leftmargini}{15pt}
{
\renewcommand{\theenumi}{{\small\sf\Alph{enumi})}} 
\renewcommand{\labelenumi}{\theenumi}
\begin{enumerate}
\item
\label{nmdsnmdsnmdsnmh2}
Let $S\cong\UE$:\: Lemma \ref{compnobound} yields $g\cdot \iota|_\Sigma=\iota\cp  \ovl{\varrho}|_\Sigma$, and we set $\Sigma':=\ovl{\varrho}(\Sigma)$:\hspace*{\fill}(\he$g\cdot\iota(\Sigma)=\iota(\Sigma')$)
\vspace{-3pt} 
\begingroup
\setlength{\leftmarginii}{14pt}
{
\renewcommand{\theenumii}{{\small\sf\alph{enumii})}} 
\renewcommand{\labelenumii}{\theenumii}
\begin{enumerate}
\item
 \label{kldslkdslksdkldslkdslkdsdsdsds9898ew98ewew2}
If $\Sigma\in \MFree(S)$ holds, then $\Sigma$ is compact  (as closed in $S\cong \UE$ by Lemma \ref{kjkjdskjkjdsaassaa}).
 \item
 \label{kldslkdslksdkldslkdslkdsdsdsds9898ew98ewew1}
If $\Sigma$ is compact, then $\Sigma'=\ovl{\varrho}(\Sigma)$ is compact.
\item
 \label{kldslkdslksdkldslkdslkdsdsdsds9898ew98ewew3}
$\Sigma\cap\Sigma'= \partial_\Sigma\cap \partial_{\Sigma'}$ holds by Lemma \ref{mfdkjdkjdfjkf} (with $C\equiv \Sigma$ there).
  \item  
   \label{kldslkdslksdkldslkdslkdsdsdsds9898ew98ewew4}
 $\Sigma'\in \Free(S)$ holds by Lemma \ref{ghfhh}, whereby $\Sigma\in \MFree(S)$ implies $\Sigma'\in \MFree(S)$ (\he\he$\Sigma\subseteq S=\innt[S]$ compact\he).
\end{enumerate}}
\endgroup
\item
\label{nmdsnmdsnmdsnmh1}
Let $S\cong \ID$:\: Then,  
$g\cdot \iota|_C=\iota\cp\ovl{\varrho}|_C$ holds for $C:=\dom[\ovl{\varrho}]\cap \Sigma$, and $\Sigma\setm C$ is connected (hence, $C$ is a\\ 
\phantom{Let $S\cong \ID$:\: }boundary segment of $\Sigma$). 
\vspace{1pt}

{\it Proof.}
The first statement is clear from Proposition \ref{nobound}. For the second statement, observe that 
if $\Sigma\setm C$ is not connected, then $S=\ovl{\varrho}(C)$ holds by Proposition \ref{nobound}.\ref{nobound2},   which contradicts Lemma \ref{mfdkjdkjdfjkf}. \hspace*{\fill}$\ddagger$

We observe the following:
 \vspace{-3pt} 
\begingroup
\setlength{\leftmarginii}{14pt}
{
\renewcommand{\theenumii}{.{\small\sf\alph{enumii})}} 
\renewcommand{\labelenumii}{{\small\sf\alph{enumii})}}
\begin{enumerate} 
\item
\label{nmdsnmdsnmdsnmh1a}
If $\Sigma$ is compact, then $\iota|_{\ovl{\varrho}(C)}=g\cdot \iota|_C\cp\ovl{\varrho}^{-1}|_{\ovl{\varrho}(C)}$ is an embedding ($g\cdot \iota|_\Sigma$ is an embedding with $C\subseteq \Sigma$).
\item
\label{nmdsnmdsnmdsnmh1b}
One of the following two situations hold:
\begingroup
\setlength{\leftmarginiii}{11pt}
\begin{itemize}
\item
$C=\Sigma$: Lemma \ref{ghfhh} yields $\ovl{\varrho}(C)\in \Free(S)$, as well as  $\ovl{\varrho}(C)\in \MFree(S)$ if $\Sigma\subseteq \innt[S]$ is compact maximal.
\vspace{2pt}
\item
$C\subset \Sigma$ is a (proper) boundary segment of $\Sigma$:  
\vspace{3pt}

Proposition \ref{nobound} shows that $S\setm\ovl{\varrho}(C)$
 is connected, hence    
$\ovl{\varrho}(C)$ is a boundary segment of $S$. 
\end{itemize}
\endgroup
\item
\label{nmdsnmdsnmdsnmh1c}
Lemma \ref{mfdkjdkjdfjkf} shows   
$\Sigma\cap \ovl{\varrho}(C)= \partial_\Sigma\cap \partial_{\ovl{\varrho}(C)}
$, which  
must be empty or  singleton as $S\cong \ID$ holds. 
\item
\label{nmdsnmdsnmdsnmh1d}
If $C$ is compact, then $\ovl{\varrho}(C)$ is compact.
\end{enumerate}}
\endgroup
\end{enumerate}}
\endgroup
\end{remark}
\noindent
We recall from Sect.\ \ref{kdskjsjkdcxccx} (Conventions)  that $[g]$ denotes the class of $g\in G$ in the quotient $\GES\equiv G\slash G_S$. 
We have the following analogue to  
Proposition 4 in \cite{Deco}.
\begin{proposition}
\label{prop:shifttrans}
Let $\MFree(S)\ni \Sigma\subset S$ be compact   
and $z\in \partial_\Sigma\cap \innt[S]$. 
Then, there exists $\GES\ni [g]\neq [e]$ uniquely determined by the following condition:
\begingroup
\setlength{\leftmargini}{18pt}
{
\renewcommand{\theenumi}{{$(\star)$}} 
\renewcommand{\labelenumi}{$(\star)$}
\begin{enumerate}
\item
\label{starlkdslkdslkdslksdlkdslklkdsdsdsds}
There exists 
$\Seg(S)\ni \Sigma_b\subseteq \Sigma$ compact with $b\in\partial_\Sigma\cap \partial_{\Sigma_b}$ as well as $\Sigma_z\in \Free(S)$ compact with $\{z\}=\partial_{\Sigma}\cap \partial_{\Sigma_z}=\Sigma\cap \Sigma_z$, such that $g\cdot\iota(\Sigma_b)=\iota(\Sigma_z)$ and $g\cdot \iota(b)=\iota(z)$ holds.\footnote{Notably, $\Sigma_b\in \Free(S)$ holds by Lemma \ref{jdkjfdkjfd}; and, we have $\Sigma\cup \Sigma_z\in \Seg(S)$  with $z\in \innt[\Sigma\cup \Sigma_z]$ (fix a chart around $z$ and apply Remark \ref{cnmnmcviufeiureiure}.\ref{cnmnmcviufeiureiure6cxcxcxcx}).}
\end{enumerate}}
\endgroup
\end{proposition}
\begin{convention}
Henceforth, we use the following notations for (compact)  segments:
\vspace{-2pt}
\begingroup
\setlength{\leftmargini}{11pt}
\begin{itemize}
\item[$*$]
Given a segment    
$\Sigma_z\in \Seg(S)$, then the index indicates that $z$ is a boundary point of $\Sigma_z$ (\he i.e., $z\in \partial_{\Sigma_z}$). 
\item[$*$]
Given a boundary segment $\Sigma_b\in \Seg(S)$ of $\Sigma\in \Seg(S)$, then the index indicates that  
$b\in \partial_\Sigma\cap\partial_{\Sigma_b}$ holds. By Remark \ref{cnmnmcviufeiureiure}.\ref{bdfoudnnddhhdhdhdseg34930943094309433434434343}, this is equivalent to require $\Seg(S)\ni \Sigma_b\subseteq \Sigma\in \Seg(S)$ with $b\in \partial_\Sigma\cap\partial_{\Sigma_b}$ (\he see e.g.\ \ref{starlkdslkdslkdslksdlkdslklkdsdsdsds}).
\end{itemize}
\endgroup
\end{convention}
\begin{proof} 
{\sf Uniqueness:}  
Assume 
that \ref{starlkdslkdslkdslksdlkdslklkdsdsdsds} holds for $[g]$, $\Sigma_b$, $\Sigma_{z}$ as well as for $[g']$, $\Sigma_{b'}$, $\Sigma'_{z}$.  
Then, $g\cdot\iota|_{\Sigma_b}$, $g'\cdot\iota|_{\Sigma_{b'}}$, $\iota|_{\Sigma_z}$, $\iota|_{\Sigma'_z}$ are embeddings by compactness, so that 
$$
\alpha:=(g\cdot \iota|_{\Sigma_b})^{-1}\cp\iota|_{\Sigma_z}\colon \Sigma_z\rightarrow \Sigma_b\qquad\quad \text{and}\qquad\quad \beta:=(g'\cdot \iota|_{\Sigma_{b'}})^{-1}\cp\iota|_{\Sigma'_{z}}\colon \Sigma_{z'}\rightarrow \Sigma_{b'}
$$ 
are homeomorphisms. Since $\Sigma\cap \Sigma_z=\{z\}=\Sigma\cap \Sigma'_z$, there exists  
an open segment $\Seg(S)\ni\UU\subseteq \Sigma_z\cap \Sigma'_{z}$.\footnote{Consider a chart $(U,\psi)$ around $z$,  as well as intervals (Remark \ref{cnmnmcviufeiureiure}.\ref{cnmnmcviufeiureiure6cxcxcxcx}) $B_z,D_z,D'_z$ with $\psi(z)\in B_z\subseteq \psi(\Sigma\cap U)$, $\psi(z)\in D_z\subseteq \psi(\Sigma_z\cap U)$, $\psi(z)\in D'_z\subseteq \psi(\Sigma'_z\cap U)$.} Then, $g\cdot \iota(\OO)=\iota(\UU)=g'\cdot \iota(\OO')$ holds for $\OO:= \alpha(\UU)\subseteq \Sigma_b\subseteq \Sigma$ and $\OO' := \beta(\UU)\subseteq \Sigma_{b'}\subseteq \Sigma$, whereby  $g\cdot \iota|_\OO$, $g'\cdot \iota|_{\OO'}$ are embeddings. Hence, we have
\begin{align*}
	 g\cdot \iota|_{\Sigma_{b}}\cpsim g'\cdot \iota|_{\Sigma_{b'}}\qquad\Longrightarrow\qquad (g^{-1}\cdot g')\cdot \iota|_\Sigma\cpsim \iota|_\Sigma\qquad\Longrightarrow\qquad [g]=[g'].
\end{align*}
\vspace{-12pt}

\noindent
{\sf Existence:} 
We choose  
$(V,\chi)$, $(0,i)\supseteq \{\tau_n\}_{n\in \NN}\rightarrow 0$,  $\{\Sigma'_n\}_{n\in \NN}\subseteq \Free(S)$,   
and $\tilde{S}$ as in Lemma \ref{dsdssddsds}; hence, 
\begin{align}
\label{dskjdkjdskjdskjsdkjdsds98sd09dsdsdsdsdsdsds}
\begin{split}
&\chi(z)=0,\quad\:\: 
 \chi(V)\equiv(i',i),\quad\:\:     
 \chi(V\cap \Sigma)=(i',0],\quad\:\: \chi(V\setm \Sigma)=(0,i), \quad\:\: \tilde{S}=\Sigma\cup V\subset S\\[6pt]
&\hspace{101pt}\Sigma'_n=\chi^{-1}([0,\tau_n])\in \Free(S)\quad\:\:\:\:\forall\: n\in \NN.
\end{split}
\end{align}
Then, $\tilde{S}$ is homeomorphic to an interval by Remark \ref{cnmnmcviufeiureiure}.\ref{cnmnmcviufeiureiure2}:
\vspace{-2pt}
\begingroup
\setlength{\leftmargini}{11pt}
\begin{itemize}
\item
According to \eqref{jdkjfdkjfddf}, 
it suffices to show that there exists $[g]\in \GGG(\tilde{S})$ such that \ref{starlkdslkdslkdslksdlkdslklkdsdsdsds} holds for $\tilde{S}$ instead of $S$, i.e., with $\Seg(\tilde{S})\ni \Sigma_b\subseteq \Sigma$ compact and $\Sigma_z\in \Free(\tilde{S})$ compact.  
\item
Since $\Sigma\in \MFree(\tilde{S})$ holds by \eqref{hggffdfdxcxfdtrtrtrtrzt7678878787878787878787}, in the following we can assume that $S=\tilde{S}$ holds, i.e., that $S=\Sigma\cup V$ is homeomorphic to an interval via $\homeo\colon \ID\rightarrow S$.   
\end{itemize}
\endgroup
\noindent
We fix $n\in \NN$ and observe the following: 
\begingroup
\setlength{\leftmargini}{11pt}
\begin{itemize}
\item
Since $\Sigma$ is maximal, we have    
$\Seg(S)\ni \Sigma_n:=\Sigma\cup \Sigma'_n\notin \Free(S)$.  
Hence, there exists some $g_n\in G\setm G_S$ with 
$g_n\cdot \iota|_{\Sigma_n}\cpsim\iota|_{\Sigma_n}$, thus
\vspace{-5pt}
\begin{align*}
	g\cdot \iota|_{\Sigma\:\cup\: \Sigma'_n}\cpsim  \iota|_{\Sigma\:\cup\: \Sigma'_n}\qquad\stackrel{\text{Lemma  }\ref{dsdssddsds}.\ref{dsdssddsds2}}{\Longrightarrow}\qquad g\cdot \iota|_{\Sigma}\cpsim  \iota|_{\Sigma'_n}\qquad\vee\qquad g\cdot \iota|_{\Sigma'_n}\cpsim  \iota|_{\Sigma}.
\end{align*}
\vspace{-15pt} 
\item
Then, replacing $g_n$ by $g_n^{-1}$ if necessary, we  can assume that $g_n\cdot\iota|_{\Sigma}(\OO_n)= \iota|_{\Sigma_n}(\OO_n')$ holds for certain open segments $\OO_n\subseteq\Sigma$ and $\OO_n'\subseteq \Sigma_n'$  such that $\iota|_{\OO_n}$, $\iota|_{\OO'_n}$ are embeddings.  
Corollary \ref{fddsfd} shows that there exists 
an analytic diffeomorphism $\varrho_n\colon \Sigma\supseteq \OO_n\rightarrow \OO'_n\subseteq \Sigma_n'$ with 
\begin{align}
\label{fgghfdss}
 g_n\cdot (\iota|_{\Sigma})|_{\OO_n} = \iota|_{\Sigma_n'}\cp \varrho_n.
\end{align}
\end{itemize}
\endgroup
\noindent 
Now, there are two different cases that can occur:
\begingroup
\setlength{\leftmargini}{13pt}
\begin{itemize}
\item[$\blacktriangleright$]
Let $\bigcup_{n\in \NN}\{[g_n]\}\subseteq \GES\setm \{[e]\}$ be finite: Passing to a subsequence, we can assume that $[g_n]=[g]$ holds for all $n\in \NN$, for some $[e]\neq [g]\in \GES$. Then, \eqref{fgghfdss} yields sequences $\{x_n\}_{n\in \NN}\subseteq \Sigma$ and $\{x'_n\}_{n\in \NN}\subseteq \Sigma'_0\setm\{z\}$ such that 
\begin{align}
\label{iudsiudsuiuds}
	x'_n\in \Sigma'_n\qquad\text{as well as}\qquad g\cdot \iota(x_n)=g_n\cdot \iota(x_n)=\iota(x'_n)\qquad\text{holds for all}\qquad n\in\NN.
\end{align}
\begingroup
\setlength{\leftmarginii}{14pt}
\begin{itemize}
\item[a)]
By \eqref{dskjdkjdskjdskjsdkjdsds98sd09dsdsdsdsdsdsds}  (definition of $\Sigma_n'$), we necessarily have $\lim_n x'_n =z$; and, by compactness of $\Sigma$, we can pass to subsequences to achieve that $\lim_n x_n = b\in \Sigma$ exists. Since $\iota$, $g\cdot \iota$ are continuous, \eqref{iudsiudsuiuds} yields 
\begin{align}
\label{oiudsoiudsoiuidsiuds}
	g\cdot\iota(b)=\iota(z).
\end{align}
Then, $\{x_n\}_{n\in \NN}\subseteq \Sigma\setm\{b\}$ holds by injectivity of $g\cdot \iota$, $\iota$ as well as by $\{x'_n\}_{n\in \NN}\subseteq \Sigma'_0\setm\{z\}$,  \eqref{iudsiudsuiuds}, \eqref{oiudsoiudsoiuidsiuds}. 
\item[b)]
Let  $(U,\psi)$ and $(U',\psi')$  be submanifold charts of $\Sigma$ that are centered at $b$ and $z$, respectively. Passing to subsequences if necessary, we can assume that $\{x_n\}_{n\in \NN}\subseteq U$ as well as $\{x_n'\}_{n\in \NN}\subseteq U'$ holds.  
\item[c)]
We have $b\in \partial_\Sigma$:
\vspace{2pt}

{\it Proof.} Assume that $b\in \innt[\Sigma]\subseteq \innt[S]$ holds (i.e.\ that the claim is wrong); hence, $b,z\in \innt[S]$. Then, Point b) together with Lemma \ref{lemma:BasicAnalytt1}.\ref{lemma:BasicAnalytt11} yields open intervals $J,J'$ with $0\in J,J'$ and  an analytic diffeomorphism $\rho\colon J\rightarrow J'$ with $\rho(0)=0$ such that  
$\gamma_\psi|_J$, $\gamma_{\psi'}|_{J'}$ are embeddings,  with  
$g\cdot \gamma_\psi|_J=\gamma_{\psi'}\cp\rho$. 
Then, $b\in \innt[\Sigma]$ together with $z\in \partial_\Sigma$ implies   
$g\cdot \iota|_\Sigma\cpsim \iota|_\Sigma$, hence $g\in G_S$ as $\Sigma\in \Free(S)$. This, however, contradicts  $[g]\neq [e]$.\hspace*{\fill}$\ddagger$  
\item[d)]
According to Point c), the following two situations can occur:
\begingroup
\setlength{\leftmarginiii}{12pt}
\begin{itemize}
\item[$\triangleright$]
$b\in \partial_\Sigma\cap\partial_S$:\hspace{12.7pt} Then, $b\in U\subseteq \Sigma$ holds, with $\psi(b)=0$ and $\Psi(U\cap \Sigma)=[0,i)$ for some $i>0$. 
\vspace{3pt}
\item[$\triangleright$]
$b\in \partial_\Sigma\cap\innt[S]$:\hspace{4pt}Then, $b\in U$ holds, with $\psi(b)=0$ as well as \hspace*{\fill}(see \eqref{dskjkjdskjdsdsiudsiuiudsds8787ds87ds87dsdsdsdsdsds})
\begin{align*}
\psi(U\cap\Sigma)=[0,i)\subseteq \mathbb{H}^1\qquad \text{and}\qquad  \psi(U\setminus \Sigma)=(i',0)\qquad\text{for certain}\qquad i'<0<i.
\end{align*} 
\end{itemize}
\endgroup
\item[e)]
Since $z\in \innt[S]$ holds, Point b) together with Lemma \ref{lemma:BasicAnalytt1}.\ref{lemma:BasicAnalytt11} yields open intervals $J,J'$ with $0\in J,J'$ and an analytic diffeomorphism $\rho\colon J\rightarrow J'$ with $\rho(0)=0$ such that $\wt{\gamma}_\psi|_{J}$, $\gamma_{\psi'}|_{J'}$ are embeddings ($J'\subseteq \dom[\gamma_{\psi'}]$), with $g\cdot \wt{\gamma}_\psi(J)=\gamma_{\psi'}(J')$. 
We fix $k\in J\cap (0,i)$, and set \hspace*{\fill}(defined by Point d))
\begin{align*}
K_b:=[0,k]\subseteq \dom[\gamma_\psi],\qquad\Sigma_b:=\psi^{-1}(K_b),\qquad K_z:= \rho(K_b)\subseteq \dom[\gamma_{\psi'}],\qquad \Sigma_z:=\psi'^{-1}(K_z).
\end{align*}  
\begingroup
\setlength{\leftmarginiii}{10pt}
\begin{itemize}
\item[$\bullet$]
Since $\rho(0)=0$ holds, $K_z$ is either of the form 
\begin{align*}
[s',0]&\quad \text{for some}\quad s'<0\qquad\quad\Longrightarrow\qquad\quad \partial_{\Sigma_z}=\{\psi'^{-1}(0),\psi'^{-1}(s')\}=\{z,\psi'^{-1}(s')\}
\\
\text{or}\qquad\quad\:\: [0,s]&\quad \text{for some}\quad \hspace{3pt} s>0
\qquad\quad\Longrightarrow\qquad\quad \partial_{\Sigma_z}=\{\psi'^{-1}(0),\psi'^{-1}(s)\}\hspace{2.6pt}=\{z,\psi'^{-1}(s)\}.
\end{align*} 
In both cases, we can achieve $\partial_\Sigma\cap \partial_{\Sigma_z}=\{z\}$, just by shrinking $\Sigma_b$ (decreasing $0<k$) if necessary. 
\item[$\bullet$]
Then, $\Sigma_b,\Sigma_z$ are compact segments, with
\begin{align*}
&\Sigma_b=\psi^{-1}(K_b)\subseteq \psi^{-1}([0,i))\subseteq \Sigma\quad\:\:\:\wedge\quad\:\:\: g\cdot \iota(\Sigma_b)= g\cdot \gamma_\psi(K_b)= \gamma_{\psi'}(\rho(K_b))=\gamma_{\psi'}(K_z)=\iota(\Sigma_z)\\[2pt]
&\hspace{190pt}\text{as well as}\\[-2pt]
&\hspace{88pt}\partial_\Sigma\cap \partial_{\Sigma_b}\ni \psi^{-1}(0)=b\qquad\wedge\qquad \partial_{\Sigma}\cap \partial_{\Sigma_z}=\{z\}.
\end{align*}
In particular, $\Sigma_b$ is a compact boundary segment of $\Sigma$. 
\end{itemize}
\endgroup  
\item[f)]
Since $g\cdot \iota|_{\Sigma_b}$, $\iota|_{\Sigma_z}$ are embeddings ($\Sigma_b$, $\Sigma_z$ compact), the map  
$$
\varrho:=(\iota|_{\Sigma_z})^{-1}\cp(g\cdot \iota|_{\Sigma_b})\colon \Sigma_b\rightarrow \Sigma_z
$$
is a homeomorphism, hence an analytic diffeomorphism by  Corollary \ref{fddsfd}: 
\begingroup
\setlength{\leftmarginiii}{10pt}
\begin{itemize}
\item[$\bullet$]
Since $\Sigma_b\subseteq \Sigma \in \Free(S)$, Lemma \ref{jdkjfdkjfd} shows  
 $\Sigma_b\in \Free(S)$. Then, $\Sigma_z=\varrho(\Sigma_b)\in \Free(S)$ holds by Lemma \ref{ghfhh}.
 \vspace{2pt} 
\item[$\bullet$]
Lemma \ref{mfdkjdkjdfjkf} (with $C\equiv \Sigma_b$, hence $\varrho(C)= \Sigma_z$) together with e) shows $\Sigma\cap \Sigma_z= \partial_{\Sigma}\cap\partial_{\Sigma_z}=\{z\}$. 
\end{itemize}
\endgroup     
\end{itemize}
\endgroup 
\noindent
Altogether, this establishes  \ref{starlkdslkdslkdslksdlkdslklkdsdsdsds}.
\item[$\blacktriangleright$]
Let $\bigcup_{n\in \NN}\{[g_n]\}\subseteq \GES\setm\{[e]\}$ be infinite: Passing to a subsequence, we can assume that $[g_m]\neq [g_n]$ holds for all $\NN\ni m\neq n\in \NN$. For each $n\in  \NN$, we define
\begin{align}
\label{fgaASASAghfdss}
\begin{split}
	\CM_n:= \Sigma\cap \dom[\ovl{\varrho}_n]\qquad\quad&\text{as well as}\qquad\quad C'_n:=\ovl{\varrho}_n(C_n)\subseteq S\\[1pt]
	\text{so }& \text{that we have}\\[1pt]
g_n\cdot \iota(C_n)=\iota(C'_n)\qquad\quad &\text{as well as}\qquad\quad  g_n\cdot (\iota|_{\Sigma})|_{C_n}= \iota\cp \ovl{\varrho}_n|_{C_n},
\end{split}
\end{align}
whereby the third line holds by Proposition \ref{nobound} (recall that above, we have replaced $S$ by $\tilde{S}=\Sigma\cup V\cong \ID$).

We observe the following:
\begingroup
\setlength{\leftmarginii}{15pt}
\begin{itemize}
\item[a)]
For each $n\in \NN$, we have 
$\innt[\Sigma\cap C'_n]=\emptyset$; because,   
\begin{align*}
\innt[\Sigma\cap C'_n]\neq \emptyset\qquad&\Longrightarrow\qquad\exists\: \OO'\in \Seg(S)\:\:\text{open with}\:\:\OO'\subseteq \Sigma\cap C'_n\quad
\\[1pt]
&\Longrightarrow \qquad\qquad\quad g_n\cdot \iota(\ovl{\varrho}^{-1}_n(\OO'))=\iota(\OO')\\[1pt]
&\Longrightarrow \qquad\quad\hspace{53pt} g_n\cdot \iota|_\Sigma\cpsim  \iota|_\Sigma\\[1pt]
&\Longrightarrow \qquad\quad\hspace{67pt} [g_n]=[e],
\end{align*}
\vspace{-25pt}

\noindent
which contradicts $[g_n]\neq [e]$. 
\item[b)]
For $\NN\ni m\neq n\in \NN$, we have 
$\innt[C'_m\cap C'_n]= \emptyset$; because, 
\begin{align*}
\innt[C'_m\cap C'_n]\neq \emptyset\qquad&\Longrightarrow\qquad\exists\: \OO'\in \Seg(S)\:\:\text{open with}\:\:\OO'\subseteq C'_m\cap C'_n\\[1pt]
&\Longrightarrow \qquad g_m\cdot \iota(\ovl{\varrho}^{-1}_m(\OO'))=\iota(\OO') = g_n\cdot \iota(\ovl{\varrho}^{-1}_n(\OO'))\\[1pt]
&\Longrightarrow \qquad\hspace{47pt} g_m\cdot \iota|_\Sigma\cpsim g_n\cdot \iota|_\Sigma\\[1pt]
&\Longrightarrow \qquad\hspace{63pt} [g_m]=[g_n],
\end{align*}
\vspace{-25pt}

\noindent
which contradicts the choices. 
\end{itemize}
\endgroup
By definition, $J'_n:=\chi(\OO'_n)\subseteq \chi(\Sigma'_n) \subseteq  [0,\tau_n]$ holds ($J'_n$ an open interval) for all $n\in \NN$:
\begingroup
\setlength{\leftmarginii}{13pt}
{
\renewcommand{\theenumi}{\small{\sf\arabic{enumi})}} 
\renewcommand{\labelenumi}{\theenumi}
\begin{enumerate}
\item
\label{nmnmcnmcxnmcxnm1}
For each $n\in \NN$, there exists some $0<\varepsilon_n<\tau_n$ with $J'_n\subseteq [\varepsilon_n,\tau_n]$. 
\vspace{4pt}

{\it Proof.}
If the claim is wrong, then there exist  $\ell\in \NN$ and  $0<j\leq \tau_\ell<i$ with $J'_\ell=(0,j)$. Let $\NN\ni p>\ell$ be such large that $\tau_p< j$ holds. Then, $J'_p\subseteq J'_\ell$ holds, which contradicts $\innt[C'_\ell\cap C'_p]=\emptyset$ (see b)).\hspace*{\fill}$\ddagger$
\vspace{4pt}

Then, since $\lim_n \tau_n=0$ holds, shrinking $V$ around $z$ and passing to a subsequence if necessary, we can assume that for all $\NN\ni n\geq 1$, we have\hspace*{\fill}(\he with\: $\chi(V\cap \Sigma)=(i',0]$\: and\: $\chi(V\setm \Sigma)=(0,i)$\he)
\begin{align}
\label{dsoioidssoioidsoidsoidsoidsoids}
	0< \varepsilon_{n+1}<\tau_{n+1}<\varepsilon_{n}<\tau_{n}<{\dots}< \varepsilon_1<\tau_1<j_0<i \qquad\text{with}\qquad J'_0=(j_0,i).
\end{align}
\item
\label{nmdsnmdskjdskjdsiudsoiewoiewioiewew98we98ew98ew}
We have $C'_n\subseteq V$ for all $n\geq 1$.
\vspace{4pt}

{\it Proof.} We set $K:=[0,j_0]\subseteq (i',i)=\chi(V)$;  and let $\NN\ni n\geq 1$ be fixed: 
\begingroup
\setlength{\leftmarginiii}{12pt}
\begin{itemize}
\item
We have $\chi(\OO_n')=J_n'\subseteq K$ by \eqref{dsoioidssoioidsoidsoidsoidsoids}, hence $J_n'\subseteq \chi(C_n'\cap V)\neq \emptyset$. 
\vspace{2pt}
\item
Hence, Remark \ref{dsoidsoidsoids8ds98ds98ds98dsdsdsdsds}.\ref{sd0909dssoididsoidsoidsoidsoidsdsdsdsdsewew2} yields $C_n'\subseteq V$ once we have shown $\chi(C_n'\cap V)\subseteq K$. 
\end{itemize}
\endgroup
Assume thus that $\chi(C_n'\cap V)\not\subseteq K$ holds, i.e., that there exists $x\in C_n'\cap V$ with 
$$i'<\chi(x)<0\qquad\quad \text{or}\qquad\quad j_0<\chi(x)<i.$$ 
In both cases,  Remark \ref{cnmnmcviufeiureiure}.\ref{cnmnmcviufeiureiure6cxcxcxcx} provides an interval $D_x$ with $\chi(x)\in D_x\subseteq \chi(C_n' \cap V)$; so that  
\begingroup
\setlength{\leftmarginiii}{11pt}
\begin{itemize}
\item[$*$] 
\hspace{2.4pt}$i'<\chi(x)<0$  
implies\hspace{4.4pt} $\innt[\Sigma\cap C'_n]\neq \emptyset$, which contradicts a).
\vspace{2pt}
\item[$*$]
$j_0<\chi(x)<i$\hspace{1.6pt}  
implies   
$\innt[C_0'\cap C'_n]\neq \emptyset$, which contradicts b).\hspace*{\fill}$\ddagger$
\end{itemize}
\endgroup
\item
\label{spdopodspodsposddsds0sd98ds98s98dds9898ds98sd98sdsdds}
For $O\subseteq S$ open with $z\in O\subseteq V$, there exists $\NN\ne m_O\geq 1$ with $C'_n\subseteq O$ for all $n\geq m_O$.
\vspace{3pt}

{\it Proof.} We fix $\NN\ni m_O\geq 1$ with $K:=[0,\tau]\subseteq \chi(O)$ for $\varepsilon_{m_O-1}\leq \tau:=\inf(J'_{m_O-1})\leq \tau_{m_O-1}$, hence 
\begin{align*}
J_n'\subseteq [0,\tau_n]\subseteq [0,\tau]\subseteq K\qquad\quad\forall\: n\geq m_O.
\end{align*} 
Let now $n\geq m_O\geq 1$ be fixed. Then,  $C_n'\subseteq V$ holds by Point \ref{nmdsnmdskjdskjdsiudsoiewoiewioiewew98we98ew98ew}.  Hence, $D_n':=\chi(C_n')\subseteq (i',i)$ is an interval (as $C_n'$ is a segment)   with $J'_n\subseteq D_n'\cap K\neq \emptyset$. Then, we must have $D_n'\subseteq K=[0,\tau]$, because:
\begingroup
\setlength{\leftmarginiii}{11pt}
\begin{itemize}
\item[$*$]
$D_n'\cap (i',0)\neq \emptyset$ implies\hspace{23.6pt}  $\innt[\Sigma\cap C'_n]\neq \emptyset$, which contradicts a).
\vspace{2pt}
\item[$*$]
$D_n'\cap (\tau,i)\hspace{2.8pt}\neq \emptyset$ implies  $\innt[ C'_{m_O-1}\cap C_n']\neq \emptyset$, which contradicts b).\hspace*{\fill}$\ddagger$
\end{itemize}
\endgroup
\item
\label{nmnmcnmcxnmcxnm2767676}
There exists $m\geq m_V\geq 1$ such that $S \setminus C'_n$ is not connected for all $n\geq m$.
\vspace{3pt}

{\it Proof.}    
We fix  $i'<o'<o<i$, and set $O:=\chi^{-1}((o',o))\subseteq V$ as well as $m:=m_O\geq 1$: 
\begingroup
\setlength{\leftmarginiii}{11pt}
\begin{itemize} 
\vspace{2pt}
\item
We have $\tilde{O}:=\homeo^{-1}(O)=(\tilde{o}',\tilde{o})$  for certain $\inf(\ID)<\tilde{o}'<\tilde{o}<\sup(\ID)$.
\vspace{2pt}
\item
By Point \ref{spdopodspodsposddsds0sd98ds98s98dds9898ds98sd98sdsdds}, for $n\geq m=m_O$, we have 
\vspace{-3pt}
\[
C_n'\subseteq O\quad\text{hence}\quad
\tilde{B}_n:=\homeo^{-1}(C_n')\subseteq \homeo^{-1}(O)=\tilde{O}\quad\text{hence}\quad    
i'<\tilde{o}'\leq \inf(\tilde{B}_n)<\sup(\tilde{B}_n)\leq \tilde{o}<i.   
\]
Thus, $\homeo^{-1}(S\setminus C_n')=\ID\setminus \tilde{B}_n$ is not connected, hence $S\setminus C_n'$  
is not connected. $\hspace*{\fill}\ddagger$ 
\end{itemize}
\endgroup
\item
\label{nmnmcnmcxnmcxnm2}
We have $C_n=\Sigma$ for all $n\geq m$.
\vspace{3pt}

{\it Proof.} 
If $C_n\subset \Sigma$ holds for some $n\geq m$, then Proposition \ref{nobound}.\ref{nobound1}  
shows that $S\setm C'_n$ is connected   (observe $C_n\subseteq \Sigma$ by definition); which contradicts Point \ref{nmnmcnmcxnmcxnm2767676}. 
\hspace*{\fill}$\ddagger$  
\end{enumerate}}
\endgroup
\noindent
Let now $W$ be a neighbourhood of $\iota(z)$, and $O\subseteq V$ open with $\iota(O)\subseteq W$ ($\iota$ is continuous). By Point \ref{spdopodspodsposddsds0sd98ds98s98dds9898ds98sd98sdsdds}, there exists $n\geq m$ with $C_n'\subseteq O$, hence $\iota(C_n')\subseteq W$. We obtain
\begin{align*}
g_n\cdot \iota(\Sigma)\stackrel{\text{Point }\ref{nmnmcnmcxnmcxnm2}}{=}g_n\cdot \iota(C_n)\stackrel{\eqref{fgaASASAghfdss}}{=}\iota(C'_n)\subseteq W. 
\end{align*}
This contradicts that $\wm$ is non-contractive (as $|\Sigma|\geq 2$).\qedhere
\end{itemize}
\endgroup
\end{proof}

\subsubsection{Positive and Negative Segments}
\label{lkdslkdslkdslkdslkdskldsds8s98d98dsdsds}
\begin{definition}
\label{dslkdslkdskldefkjdsodsoidsiidsodsdsdssd}
Let $\MFree(S)\ni \Sigma\subset S$ be compact, with boundary points $\partial_\Sigma=\{z_+,z_-\}$. 
Moreover, let $z\in \{z_+,z_-\}\cap \innt[S]$ be given (recall Remark \ref{cnmnmcviufeiureiure}.\ref{bdfoudnnddhhdhdhdseg34930943094309433434434343dsdsds}), and choose $[g]$, $\Sigma_b$, $\Sigma_z$ as in \ref{starlkdslkdslkdslksdlkdslklkdsdsdsds} in Proposition \ref{prop:shifttrans}: 
\begingroup
\setlength{\leftmargini}{12pt}
\begin{itemize}
\item
$\Sigma$ is said to be\hspace{2.8pt} {\bf positive}\: {\defff}\hspace{0.1pt} $g\notin G_z$ holds (hence, $[g]\subseteq G\setm G_z$). 

Notably, the following holds true:
\vspace{-3pt}
\begingroup
\setlength{\leftmarginii}{12pt}
\begin{itemize}
\item
$\iota(z)\neq \iota(b)$,\: hence\: $z\neq b$\hspace*{\fill} (\he$\iota(z)=\iota(b)\:\:\wedge\:\: g\cdot \iota(b)=\iota(z) \:\:\Rightarrow\:\: g\in  G_z$\he)
\vspace{1pt}
\item
$z=z_\pmm\:\:\:\Leftrightarrow\:\:\: b=z_\mmp$\hspace{147.1pt}(\he since $z\neq b\in \partial_\Sigma$\he)
\end{itemize}
\endgroup
\vspace{-3pt}
\item
$\Sigma$ is said to be {\bf negative}\: \defff\hspace{0.1pt}  
 $g\in G_z$ holds (hence, $[g]\subseteq G_z$). 
 
 Notably, the following holds true:
 \vspace{-3pt}
\begingroup
\setlength{\leftmarginii}{12pt}
\begin{itemize}
\item
$\iota(b)=\iota(z)$,\: hence\: $b=z$\hspace*{\fill} (\he$g\in G_z\:\:\Rightarrow\:\: g^{-1}\in G_z\:\:\Rightarrow\:\:\iota(b)=g^{-1}\cdot \iota(z)=\iota(z)$\he)
\item
$[g]=[g^{-1}]$\hspace{172.7pt}(see Lemma \ref{dspodsopodssdklkdss98ds98ds98ds})
\end{itemize}
\endgroup
\end{itemize}
\endgroup
\end{definition}
\begin{remark}
\label{dsdsdsdskjdsjkdskjdskjdsds09w0909ew09we09ewewewewewewewcx}
Figuratively speaking,
\vspace{-4pt}
\begingroup
\setlength{\leftmargini}{12pt}
\begin{itemize}
\item
\hspace{2.5pt}positive means that $g$ shifts $\iota(\Sigma_{b})$ such that $\iota(b)\neq \iota(z)$ is mapped to $\iota(z)$.
\item
negative means that $g$ flips\hspace{5.6pt} $\iota(\Sigma_{b})$ at $\iota(b)=\iota(z)$.
\end{itemize}
\endgroup
\noindent
Evidently, $\Sigma$ is either positive or negative if only one of its boundary points is contained in $\innt[S]$. The next lemma (Lemma \ref{asklddd}) shows that the same holds true if both boundary points of $\Sigma$ are contained in $\innt[S]$.
\end{remark}
\begin{lemma}
\label{asklddd}
Let $\MFree(S)\ni \Sigma\subset \innt[S]$ be compact. Then, $\Sigma$ is either positive or negative. Specifically, if $\partial_\Sigma=\{z_+,z_-\}$ denote the two boundary points of $\Sigma$ and $[g_\pm]$ the corresponding unique classes from Proposition \ref{prop:shifttrans}, then we have the implication:
\vspace{-4pt} 
\begin{align}
\label{dskjdskjnmdsnmdsiucsiuszudsd76ds87d87ds98798dsdsdsdsds}
g_\pmm\notin G_{z_\pmm}\qquad\:\:\Longrightarrow\qquad\:\: 
g_\mmp\notin G_{z_\mmp}\quad\:\:\wedge\quad\:\: [g_\mmp]=[(g_\pmm)^{-1}] 
\end{align}
\end{lemma}
\begin{proof}
Since $\{z_+,z_-\}=\partial_\Sigma\subseteq \innt[S]$ holds, Proposition \ref{prop:shifttrans} provides  
classes $[g_\pm]$,  compact free segments $\Sigma_{z_\pm}\in \Free(S)$ and compact (boundary) segments  
$\Sigma_{b_\pm}$ of $\Sigma$,    
with
\begin{align}
\label{kfdkjjfdkkjdfd}
	g_\pm\cdot \iota(\Sigma_{b_\pm})=\iota(\Sigma_{z_\pm}),\quad\:\: g_\pm\cdot \iota(b_\pm)=\iota(z_\pm),\quad\:\:  \Sigma\cap  \Sigma_{z_\pm}=\{z_{\pm}\},\quad\:\: b_\pm\in \partial_\Sigma\cap \partial_{\Sigma_{b_\pm}}\subseteq \{z_+,z_-\}.
\end{align} 
\vspace{-19pt}
\begingroup
\setlength{\leftmargini}{15pt}
{
\renewcommand{\theenumi}{{\sf\alph{enumi})}} 
\renewcommand{\labelenumi}{\theenumi}
\begin{enumerate}
\item
\label{nmdsnmdsnmdsnmfdfdfdfcxccxdh1}
If $g_{\pmm}\notin G_{z_\pmm}$, then  $b_{\pmm}=z_{\mmp}$.
\vspace{2pt}

{\it Proof.} We have $b_{\pmm}\in \{z_+,z_-\}$ by  \eqref{kfdkjjfdkkjdfd}, whereby 
\vspace{-4pt}
\[ 
	\quad\:\: b_{\pmm}=z_{\pmm}\quad\:\Longrightarrow\quad\:\: g_{\pmm}\cdot \iota(z_\pmm)=g_{\pmm}\cdot \iota(b_\pmm)\stackrel{\eqref{kfdkjjfdkkjdfd}}{=}\iota(z_\pmm)\quad\:\:\Longrightarrow\quad\:\: g_\pmm\in G_{z_\pmm}.\:\:\qquad\ddagger
\]
\item
\label{nmdsnmdsnmdsnmfdfdfdfcxccxdh2}
Assume that the following implications hold:
\begin{align}
\label{ajaahhdsads1}
& g_+\notin G_{z_+}\qquad \Longrightarrow\qquad\:  [g_-]=[g_+^{-1}]\\
\label{ajaahhdsads2}
& g_-\notin G_{z_-}\qquad \Longrightarrow\qquad\:  [g_+]=[g_-^{-1}]
\end{align} 
\vspace{-27pt}

\noindent
Then, we additionally have 
\vspace{-5pt}
\begin{align*}
	g_{\pmm}\notin G_{z_\pmm}\quad\:&\stackrel{\ref{nmdsnmdsnmdsnmfdfdfdfcxccxdh1}}{\Longrightarrow}\qquad b_\pmm=z_\mmp\qquad\:\:\: \Longrightarrow\qquad  g_\pmm\cdot \iota(z_\mmp)\stackrel{\eqref{kfdkjjfdkkjdfd}}{=}\iota(z_\pmm)\\[5pt]
	\qquad&\Longrightarrow\qquad  g_\pmm\notin G_{z_\mmp}
	\qquad \Longleftrightarrow\qquad  g_\pmm^{-1}\notin G_{z_\mmp}\\
	&\hspace{-8.5pt} \stackrel{\eqref{ajaahhdsads1}\slash \eqref{ajaahhdsads2}}{\Longrightarrow}\quad\:  g_\mmp\notin G_{z_\mmp}.
\end{align*} 
\end{enumerate}}
\endgroup
\noindent
By Point \ref{nmdsnmdsnmdsnmfdfdfdfcxccxdh2}, the claim  follows once we have shown that \eqref{ajaahhdsads1}  and \eqref{ajaahhdsads2} hold.  
Assume thus that $g_+\notin G_{z_+}$ holds (we only prove \eqref{ajaahhdsads1}, because \eqref{ajaahhdsads2} follows analogously). 
Then,   
 $b_+=z_-$ holds by \ref{nmdsnmdsnmdsnmfdfdfdfcxccxdh1}:
\begingroup
\setlength{\leftmargini}{12pt}
\begin{itemize}
\item
We choose charts $(U_\pm,\psi_\pm)$ around $z_\pm$ with  
$U_+\subseteq \Sigma\cup \Sigma_{z_+}\in \Seg(S)$ and $U_-\subseteq \Sigma_{b_+}\cup \Sigma_{z_-}\in \Seg(S)$ (observe $b_+=z_-$).  
The first two identities in \eqref{kfdkjjfdkkjdfd} yield sequences as in Lemma \ref{lemma:BasicAnalytt1} with $g\equiv g_+$, $x\equiv b_+$, and $x'\equiv z_+$ there.
\item
Since $z_\pm\in \innt[S]$ holds (with $z_-=b_+$), Lemma \ref{lemma:BasicAnalytt1}.\ref{lemma:BasicAnalytt123} yields  
compact connected neighbourhoods $\KK_\pm\subseteq U_\pm$ (thus, $\KK_+\subseteq \Sigma\cup \Sigma_{z_+}$ and $\KK_-\subseteq \Sigma_{b_+}\cup \Sigma_{z_-}$) of $z_\pm$  
such that $g_+\cdot \iota(\KK_-)=\iota(\KK_+)$ holds; hence,
    \begin{align*}
\KK_+\cap \Sigma=\{z_+\}\:\dot\cup\: (\KK_+\setm \Sigma_{z_+})\qquad\quad\text{and}\qquad\quad \KK_-\cap \Sigma_{z_-}=\{z_-\}\:\dot\cup\: (\KK_-\setm \Sigma_{b_+}).
\end{align*}
\vspace{-15pt}
\item
Set $\tilde{b}_-:=z_+$. 
Since $\iota$, $g\cdot\iota$ are injective, we obtain
 \hspace*{\fill}($b_+=z_-$) 
\begin{align*}
	g_+^{-1}\cdot \iota(\underbrace{\KK_+\cap \Sigma}_{\displaystyle =: \tilde{\Sigma}_{\tilde{b}_-}})&= \{g_+^{-1}\cdot \iota(z_+)\}\:\dot\cup\: (g_+^{-1}\cdot\iota(\KK_+\setm \Sigma_{z_+}))\\[-23pt]
	&=\{\iota(b_+)\}\:\dot\cup\: ((g_+^{-1}\cdot\iota(\KK_+))\setm (g_+^{-1}\cdot \iota(\Sigma_{z_+})))\\
	&=\{\iota(z_-)\}\:\dot\cup\: (\iota(\KK_-)\setm \iota(\Sigma_{b_+}))\\
	&=\iota(\underbrace{\KK_-\cap\Sigma_{z_-}}_{\displaystyle =: \tilde{\Sigma}_{z_-}}).
\end{align*}
\end{itemize}
\endgroup
\vspace{-7pt}
\noindent
Thus, $g_+^{-1}\cdot \iota(\tilde{\Sigma}_{\tilde{b}_-}) = \iota(\tilde{\Sigma}_{z_-})$ holds for $\tilde{\Sigma}_{\tilde{b}_-},\tilde{\Sigma}_{z_-}\in \Seg(S)$ compact,  with  
\vspace{-15pt}
$$\Free(S)\ni\Sigma_{z_-}\supseteq\tilde{\Sigma}_{z_-}\in \Free(S)\quad\text{(Lemma \ref{jdkjfdkjfd})}\quad\:\:\:\text{as well as}\quad\:\:\: \tilde{b}_-\in\partial_\Sigma\cap \partial_{\tilde{\Sigma}_{\tilde{b}_-}}\quad\:\text{and} \overbrace{\underbrace{\Sigma\cap \tilde{\Sigma}_{z_-}}_{z_-\he \in }}^{\subseteq \: \Sigma\:\cap\: \Sigma_{z_-}=\:\{z_-\}}\hspace{-15pt}=\{z_-\};$$ 
\vspace{-10pt}

\noindent
so that $\tilde{\Sigma}_{\tilde{b}_-},\tilde{\Sigma}_{z_-}$ fulfill the requirements in \ref{starlkdslkdslkdslksdlkdslklkdsdsdsds}. The uniqueness statement in  
Proposition \ref{prop:shifttrans} thus yields $[g_+^{-1}]=[g_-]$, which establishes \eqref{ajaahhdsads1}.
\end{proof} 
\begin{corollary}
\label{dssddsds}
Let $\MFree(S)\ni \Sigma\subset \innt[S]$ be positive with boundary points $\partial_\Sigma=\{z_+,z_-\}\subset \innt[S]$, and let $[g_\pm]$ denote the corresponding (unique) classes from Proposition \ref{prop:shifttrans}. Then, we have 
	 $[g_\pm]=[g_\mp^{-1}]$. 
\end{corollary}
\begin{proof}
Clear from Lemma \ref{asklddd}.
\end{proof}
\begin{lemma}
\label{dspodsopodssdklkdss98ds98ds98ds}
Let $\MFree(S)\ni \Sigma\subset S$ be compact with $z\in \partial_\Sigma\cap \innt[S]$, and let $[g]$ denote the unique class from Proposition  \ref{prop:shifttrans}.  
Then, we have the implication 
\begin{align}
\label{ajaahhdsadfdgdgfs}
g \in G_{z}\qquad \Longrightarrow\qquad  [g]=[g^{-1}].
\end{align}
\end{lemma}
\begin{proof} 
Let $\Sigma_b,\Sigma_z\in \Seg(S)$ be as in  \ref{starlkdslkdslkdslksdlkdslklkdsdsdsds}. Since $g\in G_z$ holds by assumption,  $g\cdot \iota(b)=\iota(z)$ together with  injectivity of $\iota$ yields $b=z$, hence\hspace*{\fill}($b\in \Sigma_b\subseteq \Sigma$ and $z\in \Sigma_z$) 
\vspace{-6pt}
\begin{align*}
	z\ni \Sigma_b\cap\Sigma_z\subseteq \Sigma\cap\Sigma_z\stackrel{\text{\ref{starlkdslkdslkdslksdlkdslklkdsdsdsds}}}{=}\{z\}\qquad\Longrightarrow\qquad \Sigma_b\cap\Sigma_z=\{z\}.
\end{align*}
Since $g\cdot \iota(\Sigma_{b})=\iota(\Sigma_{z})$ holds with $\Sigma_b,\Sigma_z$ compact, we have $g\cdot \iota|_{\Sigma_{b}}=\iota\cp \varrho$ for the analytic diffeomorphism $\varrho:=\iota^{-1}\cp (g\cdot \iota|_{\Sigma_{b}})\colon \Sigma_b\rightarrow \Sigma_z$ (Corollary \ref{fddsfd}). 
Thus, fixing a chart $(U,\psi)$ around $b=z\in \innt[\Sigma_b\cup\Sigma_z] \subseteq\innt[S]$ with $U\subseteq \Sigma_b\cup\Sigma_z$, Lemma \ref{lemma:BasicAnalytt1}.\ref{lemma:BasicAnalytt123} yields compact connected neighbourhoods $\KK,\KK'\subseteq U\subseteq \Sigma_b\cup\Sigma_z$ of $b=z$ with $g\cdot \iota(\KK)=\iota(\KK')$. Then, we have
\begin{align*}
g\in G_z,\qquad g\cdot \iota(\Sigma_{b})=\iota(\Sigma_{z}),\qquad
	\KK\cap \Sigma_z=\{b\}\:\dot\cup\: (\KK\setm \Sigma_b),\qquad\KK'\cap \Sigma_b=\{z\}\:\dot\cup\: (\KK'\setm \Sigma_z).
\end{align*}
Together with injectivity of $\iota$, $g\cdot \iota$, 
this yields
\begin{align*}
	g\cdot \iota(\underbrace{\KK\cap \Sigma_{z}}_{\displaystyle=:\tilde{\Sigma}_z})&= \{g\cdot \iota(b)\}\:\dot\cup \:(g\cdot \iota(\KK\setm \Sigma_b))\\[-19pt]
	&=\{\iota(z)\}\:\dot\cup \: ((g\cdot \iota(\KK))\setm (g\cdot \iota(\Sigma_b)))\\
	&= \{\iota(z)\}\:\dot\cup \: (\iota(\KK')\setm \iota(\Sigma_z))\\
	&=\iota(\underbrace{\KK'\cap\Sigma_b}_{\displaystyle =: \tilde{\Sigma}_b}).
\end{align*}
\vspace{-7pt}

\noindent
Thus, $g^{-1}\cdot \iota(\tilde{\Sigma}_{b}) = \iota(\tilde{\Sigma}_{z})$ holds for $\tilde{\Sigma}_b,\tilde{\Sigma}_z\in \Seg(S)$ compact, with  
$\Free(S)\ni \Sigma_z\supseteq\tilde{\Sigma}_z\in \Free(S)$ 
(Lemma \ref{jdkjfdkjfd}) 
as well as $b\in\partial_\Sigma\cap \partial_{\tilde{\Sigma}_b}$ 
and $\Sigma\cap \tilde{\Sigma}_z=\{z\}$. Hence, $\tilde{\Sigma}_{b},\tilde{\Sigma}_{z}$ fulfill the requirements in \ref{starlkdslkdslkdslksdlkdslklkdsdsdsds}. The uniqueness statement in  
Proposition \ref{prop:shifttrans} thus yields $[g^{-1}]=[g]$. 
\end{proof}
\begin{lemma}
\label{qasggpapd}
Let $S'\in \Seg(S)$ be given, and let $\MFree(S')\ni \Sigma\subset S'$ be  positive w.r.t.\ $(S',\iota|_{S'})$. Then, $\Sigma$ is (compact) maximal w.r.t.\ $(S,\iota)$.  
\end{lemma}
\begin{proof}  
$\Sigma\in \Free(S)$ holds by \eqref{jdkjfdkjfddf}; and we  choose $\Sigma\subseteq \Sigma'\in \MFree(S)$ as in Lemma \ref{fhdhh}. 
Since $\Sigma$ is compact by definition, we only have to show that $\Sigma=\Sigma'$ holds. According to Remark \ref{cnmnmcviufeiureiure}.\ref{bdfoudnnddhhdhdhdseg3493094309430943343443434398ds98ds98ds98dsds},  this follows once we have shown $\partial_{\Sigma}=\partial_\Sigma'$. Write  
$\partial_\Sigma=\{z_+,z_-\}$.  
By definition, at least one boundary  point $z_\pm$ of $\Sigma$ is contained in $\innt[S']$. W.l.o.g., we can assume that $z_+\in \partial_\Sigma\cap \innt[S']$ holds, and choose $[g_+]\in \GGG(S')$, $\Sigma_{b_+}\subseteq \Sigma$, $\Sigma_{z_+}\subseteq S'$ as in Proposition \ref{prop:shifttrans} (applied to $(S',\iota|_{S'})$ and $\Sigma\in \MFree(S')$). Then, we have 
\begin{align}
\label{oidsoioidsdoidoiewweewewew}
	g_+\cdot \iota(\Sigma_{b_+})=\iota(\Sigma_{z_+}),\qquad g_+\cdot \iota(b_+)=\iota(z_+),\qquad \Sigma\cap  \Sigma_{z_+}=\{z_{+}\},\qquad b_+\in \partial_\Sigma\cap \partial_{\Sigma_{b_+}},
\end{align}
whereby  
$b_+ = z_-$ holds as $\Sigma$ is positive w.r.t.\ $(S',\iota|_{S'})$:
\begingroup
\setlength{\leftmargini}{11pt}
\begin{itemize}
\item
We have $z_+\in \partial_{\Sigma'}$:   
\vspace{1pt}

{\it Proof.}
If $z_+\in \innt[\Sigma']$ holds, then  
  \eqref{oidsoioidsdoidoiewweewewew} implies $g_+\cdot \iota|_{\Sigma'}\cpsim \iota|_{\Sigma'}$. Since $\Sigma'\in \MFree(S)\subseteq \Free(S)$ holds,  this contradicts $g_+\notin G_{S'}=G_S$ (Corollary \ref{dfdfdfdfd}).
  \hspace*{\fill}$\ddagger$ 
\item
We have $z_-\in \partial_{\Sigma'}$:\:    
\vspace{1pt}

{\it Proof.}
Assume that $b_+=z_-\in \innt[\Sigma']\subseteq\innt[S]$ holds, hence $z_\pm\in \innt[S]$. Let $(U_\pm,\psi_\pm)$ be fixed charts around $z_\pm$ with $U_+\subseteq \Sigma\cup\Sigma_{z_+}$ and $U_-\subseteq \innt[\Sigma']$. Then, \eqref{oidsoioidsdoidoiewweewewew} and Lemma  \ref{lemma:BasicAnalytt1}.\ref{lemma:BasicAnalytt123}  yield connected compact neighbourhoods $\KK_{\pm}\subseteq U_{\pm}$ of $z_\pm$ such that $g_+\cdot \iota(\KK_-)=\iota(\KK_+)$ holds. Then, 
\begin{align*}
	\KK_+\cap \Sigma=\{z_+\}\:\dot\cup\: (\KK_+\setm \Sigma_{z_+})\qquad\quad\text{as well as}\qquad\quad \emptyset\neq \innt[\KK_-\setminus \Sigma_{b_+}]\subseteq \innt[\Sigma']
\end{align*}
holds, and we obtain\hspace*{\fill}($b_+=z_-$)
\begin{align*}
	g_+^{-1}\cdot\iota(\KK_+\cap \Sigma)&=\{g_+^{-1}\cdot\iota(z_+)\} \:\dot\cup\: g_+^{-1}\cdot\iota(\KK_+\setm \Sigma_{z_+})\\
	&=\{\iota(b_+)\} \:\dot\cup\: ((g_+^{-1}\cdot\iota(\KK_+))  \setm (g_+^{-1}\cdot\iota(\Sigma_{z_+})))\\
	&=\{\iota(z_-)\} \:\dot\cup\: (\iota(\KK_-)  \setm \iota(\Sigma_{b_+}))\\
	&=\{\iota(z_-)\} \:\dot\cup\: \iota(\KK_- \setm\Sigma_{b_+}).
\end{align*}
 This implies $g_+\cdot \iota|_{\Sigma'}\cpsim \iota|_{\Sigma'}$, which  contradicts $g_+\notin G_{S'}=G_S$ (Corollary \ref{dfdfdfdfd}) as $\Sigma'\in \Free(S)$ holds. 
\qedhere
\end{itemize}
\endgroup
\end{proof}
\begin{lemma}
\label{fdfdfdfd}
Let $g_1, \dots, g_n\in \overlap(S)$ and $g_1', \dots, g_n'\in G$ be given. Then, 
\begin{align}
\label{qwepokfdjkhfd}
\:[g_k]=[g'_k]\quad \text{for}\quad k=1,\dots, n\qquad\quad \Longrightarrow\qquad\quad 
[(g_n\cdot {\dots}\cdot g_1)^{\pm}]= [(g'_n\cdot {\dots}\cdot g'_1)^\pm].
\end{align}
\end{lemma}
\begin{proof}
	Let $Q$ denote the set of all finite products of elements in $\overlap(S)$:
\begingroup
\setlength{\leftmargini}{11pt}
\begin{itemize}
\item
	The right side of \eqref{dslkjkjskjds} implies that $Q$ is a subgroup of $G$. 
\item	 
	Corollary \ref{aaaafsdfsfdfs} implies that $G_S\subseteq Q$ is a normal subgroup of $Q$,   
	hence $Q\slash G_S$ is a group.
\item
If $g\in Q$, then the class $[g]$ of $g$ in $\GES$ equals the class $[g]_{*}$ of $g$ in $Q\slash G_S$.
\end{itemize}
\endgroup
\noindent
Now, for each $k\in \{1,\dots,n\}$, we have $g_k'=g_k\cdot h_k$ for some $h_k\in G_S$.  Hence,  $g_k' \in \overlap(S)$ holds for $k=1,\dots,n$, and thus $g_1'\cdot {\dots}\cdot g_n' \in Q$. We obtain
\begin{align}
\begin{split}
[(g'_n\cdot {\dots}\cdot g'_1)^\pm]&=[(g'_n\cdot {\dots}\cdot g'_1)^\pm]_*=([g'_n\cdot {\dots}\cdot g'_1]_*)^\pm\\
&=([g_n']_*\cdot{\dots}\cdot [g_1']_*)^\pm\\
&=([g_n]_*\cdot{\dots}\cdot [g_1]_*)^\pm\\
&=([g_n\cdot {\dots}\cdot g_1]_*)^\pm=[(g_n\cdot {\dots}\cdot g_1)^\pm]_*=[(g_n\cdot{\dots}\cdot g_1)^\pm].
\qedhere
\end{split}
\end{align}
\end{proof}
\subsection{$\Sigma$-Decompositions}
\label{kjdskjsdkjsdoidsoidsds98s98ds98ds9898ds09ds09ds09dssdds} 
In this section, we introduce the notion of a $\Sigma$-decomposition, which is done separately for the case 
 $S\cong \UE$ (Sect.\ \ref{fghgfasa1}) and $S\cong\ID$ (Sect.\ \ref{fghgfasa2}).

\subsubsection{Compact without Boundary ($S\cong \UE$)}
\label{fghgfasa1}
In this subsection, we discuss the case $S\cong\UE$  ($S$ is compact without boundary).
\begin{remark}
\label{kjfdjlkfdjkfd}
Let $S\cong \UE$ be free with $S\notin \Free(S)$.  
 \vspace{-4pt}
 \begingroup
\setlength{\leftmargini}{11pt}
\begin{itemize}
\item 
	 Lemma \ref{fhdhh} yields $S\notin\MFree(S)\neq \emptyset$. 
\item	
	Each $\Sigma\in \MFree(S)$ is necessarily compact (as closed in $S$ by Lemma \ref{kjkjdskjkjdsaassaa}) with $\Sigma\subset S=\innt[S]$.
\end{itemize}
\endgroup
\end{remark} 
\begin{definition}
\label{gfgfgf}
Let $\MFree(S)\ni \Sigma\subset S\cong \UE$.  
A $\Sigma$-decomposition of $S$ is a (finite)  collection of (compact) maximal segments $\Sigma_0,\dots,\Sigma_n\in \MFree(S)$ with $n\geq 1$ as well as classes $[g_0],\dots,[g_n]\in \GES$ such that the following conditions are fulfilled:
\begingroup
\setlength{\leftmargini}{15pt}
{
\renewcommand{\theenumi}{{\bf\small\arabic{enumi})}} 
\renewcommand{\labelenumi}{\theenumi}
\begin{enumerate}
\item
\label{defo00}
$\Sigma_0=\Sigma$,\quad $S=\bigcup_{k=0}^n\Sigma_k$, \quad $g_k\cdot \iota(\Sigma_0)=\iota(\Sigma_k)$\:\: for\:\: $k=0,\dots,n$.
\item
\label{defo1}
\begingroup
\setlength{\leftmarginii}{12pt}
\begin{itemize}
\item
$n=1$:\quad $\Sigma_0\cap\Sigma_1=\partial_{\Sigma_0}\cap \partial_{\Sigma_1}$ consists of the two boundary points of $\Sigma_0$ and $\Sigma_1$.
\item
$n\geq 2$:\quad $\Sigma_p\cap \Sigma_q=\partial_{\Sigma_p}\cap \partial_{\Sigma_q}$ is singleton for $|p-q|\in \{1,n\}$, and empty for $2\leq |p-q|\leq n-1$.
\end{itemize}
\endgroup
\end{enumerate}}
\endgroup
\end{definition}
\begin{remark}
\label{sdffdsfsd}
Assume that we are in the situation of Definition \ref{gfgfgf}.
\begingroup
\setlength{\leftmargini}{15pt}
{
\renewcommand{\theenumi}{{\sf\arabic{enumi})}} 
\renewcommand{\labelenumi}{\theenumi}
\begin{enumerate}
\item
\label{sdffdsfsddsiuiudsiusd}
It follows inductively from Point \ref{defo1} that 
there exist $z_0,\dots,z_{n}\in S$ mutually different such that $\partial_{\Sigma_k}=\{z_k,z_{k+1}\}$ holds for $k=0,\dots,n$ if we set $z_{n+1}:=z_0$. In particular,
\begingroup
\setlength{\leftmarginii}{12pt}
\begin{itemize}
\item
$n=1$:\quad $\Sigma_0\cap\Sigma_1\hspace{11pt}=\partial_{\Sigma_0}\cap \partial_{\Sigma_1}\hspace{9.5pt}=\{z_0,z_1\}$.
\item
$n\geq 2$:\quad $\Sigma_k\cap\Sigma_{k+1}=\partial_{\Sigma_k}\cap \partial_{\Sigma_{k+1}}=\:\{z_{k+1}\}$\quad for\quad $k=0,\dots,n$\quad if we set\quad $\Sigma_{n+1}:=\Sigma_0$.
\end{itemize}
\endgroup
More concretely, it is straightforward from Remark \ref{dslkdslkdslklkdslkdslkslkslkdsdsds} that there exist $0=\alpha_0<{\dots}<\alpha_n=2\pi$ and a homeomorphism $\homeo\colon \UE\rightarrow S$ with 
\begin{align}
\label{dsnmdshjdshjdsiuduids86s76sdds879ds898ds98ds98dsds}
z_k=\homeo(\e^{\I \alpha_k})\qquad\text{and}\qquad\Sigma_k=\homeo(\e^{\I[\alpha_k,\alpha_{k+1}]})
 \qquad\text{for}\qquad k=0,\dots, n-1.\quad
\end{align}  
\item
\label{sdffdsfsd11}
We have $[g_0]=[e]$:
$$
g_0\cdot \iota(\Sigma_0)\stackrel{\ref{defo00}}{=}\iota(\Sigma_0)\qquad\stackrel{\Sigma_0\:{\rm compact}}{\Longrightarrow}\qquad g_0\cdot \iota|_{\Sigma_0}\cpsim\iota|_{\Sigma_0}\qquad\:\:\stackrel{\Sigma\:\in\:\Free(S)}{\Longrightarrow}\qquad\:\: g_0\in G_S.
$$ 
\item
\label{sdffdsfsd1}
The classes $[g_0],\dots,[g_n]$ are mutually different:
\[
[g_p]= [g_q]\quad\text{for}\quad 0\leq p, q\leq n
\qquad\:\:\stackrel{\iota\:{\rm injective}}{\Longrightarrow}\qquad\:\:   
\Sigma_p=\Sigma_q
\qquad\quad\stackrel{\ref{defo1}}{\Longrightarrow}\qquad\quad p=q.
\]
\item
\label{sdffdsfsd199}
If\:  $1\leq p\leq n$, then
\begin{align}
\label{dsnmdsdszucxzuhjcxkjdsiuwiueew433443433443}
h_p\cdot \iota(\Sigma_{p-1})=\iota(\Sigma_p)\qquad\text{holds for}\qquad h_p:=g_{p}\cdot g_{p-1}^{-1}.\qquad\qquad
\end{align}
\item
\label{sdffdsfsd19993}
In Definition \ref{gfgfgf},  
the requirement that the segments $\Sigma_1,\dots,\Sigma_n$ are compact maximal, can be replaced by the requirement that $\iota|_{\Sigma_k}$ is an embedding for $k=1,\dots,n$.
\vspace{3pt}

{\it Proof.} 
The one direction is clear.  
Assume thus that $\iota|_{\Sigma_k}$ is an embedding for some $1\leq k\leq n$. Then, $\varrho:=(\iota|_{\Sigma_k})^{-1}\cp (g\cdot \iota|_{\Sigma_0})$ is defined by \ref{defo00} as well as a homeomorphism by compactness of $\Sigma_0$. Hence, $\Sigma_k$ is compact, and $\varrho$ is an analytic diffeomorphism by Corollary \ref{fddsfd}. Since $\Sigma_0=\Sigma\subseteq S=\innt[S]$ holds (recall $S\cong\UE$), Lemma \ref{ghfhh} shows $\Sigma_k\in \MFree(S)$.\hspace*{\fill}$\ddagger$
\vspace{3pt}

In Sect.\ \ref{fghgfasa2}, we will use the above (alternative)  condition to define $\Sigma$-decompositions in the case   
$S\cong \ID$.  
The reason is that (in the case $S\cong \ID$) a 
$\Sigma$-decomposition can also contain  
 segments  
 (boundary segments) that are not necessarily compact, see Definition \ref{deco}. 
\item
\label{sdffdsfsd3}
If $n=1$, then $[g_1]=[g_1^{-1}]$. 
\vspace{3pt}

{\it Proof.}
If $\Sigma_0$ is negative, then the claim is clear from \eqref{ajaahhdsadfdgdgfs} in Lemma \ref{dspodsopodssdklkdss98ds98ds98ds}. Assume thus  that $\Sigma_0$ is positive, and let $[g_\pm]$ 
be as in Proposition \ref{prop:shifttrans} (applied to $z\equiv z_\pm$ and $\Sigma\equiv\Sigma_0$). According to Definition \ref{gfgfgf}, we have $S=\Sigma_0\cup\Sigma_1$ with $\Sigma_0\cap \Sigma_1=\{z_+,z_-\}$ and $g_1\cdot \iota(\Sigma_0)=\iota(\Sigma_1)$. Then, \ref{starlkdslkdslkdslksdlkdslklkdsdsdsds} in Proposition \ref{prop:shifttrans} yields 
\begin{align}
\label{nmdskjdskjdsiudsiudiudsiudkjdskjds87ds8787dsdsds}
\begin{split}
g_\pm \cdot \iota|_{\Sigma_0}\cpsim \iota|_{\Sigma_1}\qquad\:\:&\Longrightarrow\qquad\:\:
g_\pm \cdot \iota|_{\Sigma_0}\cpsim g_1\cdot  \iota|_{\Sigma_0}\\
\qquad\:\:&\Longrightarrow\qquad\:\:  (g_1^{-1}\cdot g_\pm) \cdot \iota|_{\Sigma_0}\cpsim  \iota|_{\Sigma_0}
 \qquad\stackrel{\Sigma_0\:\in\: \Free(S)}{\Longrightarrow}\qquad [g_\pm]=[g_1].
\end{split}
\end{align}
Corollary \ref{dssddsds} (second step) and Lemma \ref{fdfdfdfd} (third step) yield
\vspace{-3pt}
\[
\hspace{161pt}[g_1]\stackrel{\eqref{nmdskjdskjdsiudsiudiudsiudkjdskjds87ds8787dsdsds}}{=}[g_+]=[g_-^{-1}]\stackrel{\eqref{nmdskjdskjdsiudsiudiudsiudkjdskjds87ds8787dsdsds}}{=}[g_1^{-1}].\hspace{160pt}\ddagger
\]
\item
\label{sdffdsfsd2}
Let $0\leq p\leq n$ be given, and set $\Sigma':=\Sigma_p$.  We define $\Sym_{n+1}\ni \zeta\colon \{0,\dots,n\}\rightarrow \{0,\dots,n\}$ by
\begin{align*}
\zeta(k):=
\begin{cases} 
	p+k &\:\mbox{for }\:\:\:\:\quad\:\:\hspace{21pt} 0\leq k\leq n-p\\ 
	k- (n-p+1) & \:\mbox{for }\:\:\:\: n-p+1\leq k\leq n.
\end{cases} 
\end{align*}
We obtain a $\Sigma'$-decomposition $\Sigma'_0,\dots,\Sigma'_n$, $[g'_0],\dots,[g'_n]$ of $S$, if we set
\begin{align}
\label{fdggf}
	\Sigma'_k:=\Sigma_{\zeta(k)}\qquad\text{and}\qquad [g'_k]:=[g_{\zeta(k)}\cdot g^{-1}_{p}]\qquad\text{for}\qquad k=0,\dots,n.
\end{align} 
In the case $n=p=1$, we obtain from  Lemma \ref{fdfdfdfd} in the second step 
\vspace{-4pt}
\begin{align*}
	[g'_1]\stackrel{\eqref{fdggf}}{=}[g_0\cdot g_1^{-1}]\stackrel{{\rm Point}\:\ref{sdffdsfsd11}}{=}[e\cdot g_1^{-1}]=[g_1^{-1}]\stackrel{{\rm Point}\:\ref{sdffdsfsd3}}{=}[g_1].
\end{align*}
\end{enumerate}}
\endgroup
\end{remark}
\begin{lemma}
\label{asdhlkdsajkd}
Let\: $\MFree(S)\ni \Sigma\subset S\cong \UE$  
be given, with $\Sigma$-decomposition as in Definition \ref{gfgfgf}.  
Then, for each $\Sigma'\in \Seg(S)$ and $g\in G$, we have the  implication:
\begin{align}
\label{fdsdfd}
 g\cdot \iota(\Sigma_0)=\iota(\Sigma')\quad\:\:\Longrightarrow\quad\:\: [g]=[g_k]\:\:\text{ as well as }\:\:\Sigma'=\Sigma_k\:\:\text{ holds for }\:\: k\in \{0,\dots,n\}\:\:\text{ unique.}
\end{align}
In particular, the following assertions hold:
\begingroup
\setlength{\leftmargini}{15pt}
{
\renewcommand{\theenumi}{{\alph{enumi})}} 
\renewcommand{\labelenumi}{\theenumi}
\begin{enumerate}
\item
\label{nmdsnmdnmdssdkjskjkjdskjdsds7ds87ds87ds87dszuiu1}
If $n=1$, then there exists no other $\Sigma$-decomposition of $S$.
\item
\label{nmdsnmdnmdssdkjskjkjdskjdsds7ds87ds87ds87dszuiu2}
If $n\geq 2$, then the only other $\Sigma$-decomposition of $S$ is given by 
\begin{align}
\label{dfddsf}
\ovl{\Sigma}_0=\Sigma,\: [\ovl{g}_0]=[g_0]\quad\:\:\text{as well as}\quad\:\:
	\ovl{\Sigma}_{k}:=\Sigma_{\zeta(k)},\: [\ovl{g}_k]:=[g_{\zeta(k)}]\quad\:\:\text{for}\quad\:\: k=1,\dots,n,
\end{align}
with $\zeta(0):=0$ and $\zeta(k):=n-(k-1)$ for $k=1,\dots,n$.\footnote{Observe that in the case $n=1$, the definitions would yield  $\zeta=\id_{\{0,1\}}$, which is in line with Part \ref{nmdsnmdnmdssdkjskjkjdskjdsds7ds87ds87ds87dszuiu1}.}
\end{enumerate}}
\endgroup 
\end{lemma} 
\begin{proof}
The uniqueness statement in \eqref{fdsdfd} holds, because the classes $[g_0],\dots,[g_n]$  are mutually different by Remark \ref{sdffdsfsd}.\ref{sdffdsfsd1}. For the implication in \eqref{fdsdfd}, assume that the left side of \eqref{fdsdfd} holds:  
\begingroup
\setlength{\leftmargini}{12pt}
\begin{itemize}
\item
There exists $0\leq k\leq n$ with $\innt[\Sigma']\cap \Sigma_k\neq \emptyset$, because $S=\bigcup_{k=0}^n\Sigma_k$ holds by \ref{defo00}. Remark \ref{cnmnmcviufeiureiure}.\ref{cnmnmcviufeiureiure5} implies 
\vspace{-4pt}
\begin{align}
\label{dsdsnnmdsdsjsdiudsiusiudsds87ds8787dsdsdsds}
\iota|_{\Sigma'}\cpsim \iota|_{\Sigma_k}\qquad\quad\stackrel{\ref{defo00}}{\Longrightarrow}\qquad\quad \iota|_{\Sigma'}\cpsim g_k\cdot \iota|_{\Sigma_0}.
\end{align} 
\vspace{-20pt}
\item
Since $\Sigma_0\in \Free(S)$ is free (third step), we obtain from the previous point (first implication):
\vspace{-3pt}
\begin{align*}
g\cdot \iota(\Sigma_0)=\iota(\Sigma')\qquad&\stackrel{\eqref{dsdsnnmdsdsjsdiudsiusiudsds87ds8787dsdsdsds}}{\Longrightarrow}\qquad g\cdot \iota|_{\Sigma_0}\cpsim  g_k\cdot \iota|_{\Sigma_0}\qquad \Longrightarrow\qquad (g_k^{-1}\cdot g)\cdot \iota|_{\Sigma_0}\cpsim  \iota|_{\Sigma_0}\\[2pt]
& \Longrightarrow\qquad\hspace{20pt} [g]=[g_k] \qquad\hspace{21pt}\Longrightarrow\qquad\hspace{51pt} \Sigma'=\Sigma_k
\end{align*}
\vspace{-22pt}
\end{itemize}
\endgroup
\noindent
Finally, let $\Sigma'_0,\dots,\Sigma'_{n'}$, $[g'_0],\dots,[g'_{n'}]$ be  another $\Sigma$-decomposition of $S$: 
\begingroup
\setlength{\leftmargini}{12pt}
\begin{itemize}
\item
By definition, we have $\Sigma_0'=\Sigma_0$, and   
Remark \ref{sdffdsfsd}.\ref{sdffdsfsd11} yields $[g_0']=[e]=[g_0]$. 
\item
Applying \eqref{fdsdfd} to both decompositions, yields $n'=n$, and furthermore that there exists a unique bijection $S_{n+1}\ni \zeta \colon \{0,\dots,n\}\rightarrow \{0,\dots,n\}$ with $\zeta(0)=0$ and (use injectivity of $\iota$)
\begin{align*}
(\Sigma'_k,[g'_k])=(\Sigma_{\zeta(k)},[g_{\zeta(k)}])\qquad\quad\forall\:1\leq k\leq n.
\end{align*}
\vspace{-22pt}
\end{itemize}
\endgroup
\noindent
From this, Point \ref{nmdsnmdnmdssdkjskjkjdskjdsds7ds87ds87ds87dszuiu1} is immediate; so that it remains to prove Point  \ref{nmdsnmdnmdssdkjskjkjdskjdsds7ds87ds87ds87dszuiu2}. Let thus $n\geq 2$. 
Since $\Sigma_0=\Sigma=\Sigma_0'$ holds, 
Point \ref{defo1} in 
Definition \ref{gfgfgf} implies that we either have $\Sigma'_1=\Sigma_1$ or $\Sigma'_1=\Sigma_n$:
\begingroup
\setlength{\leftmargini}{12pt}
\begin{itemize}
\item
If $\Sigma'_1=\Sigma_1$ holds, then Point \ref{defo1} implies $\Sigma_2'=\Sigma_2$, because we have $\Sigma'_2\neq \Sigma_0$ by $\Sigma_0=\Sigma_0'$. 

It follows inductively that $\zeta(k)=k$ holds $k=1,\dots,n$.
\item
If $\Sigma'_1=\Sigma_n$ holds, then Point \ref{defo1} implies $\Sigma_2'=\Sigma_{n-1}$, because we have $\Sigma'_2\neq \Sigma_0$ by $\Sigma_0=\Sigma_0'$.
 
It follows inductively that $\zeta(k)=n-(k-1)$ holds for $k=1,\dots,n$.\qedhere
\end{itemize}
\endgroup	
\end{proof}
\begin{lemma}
\label{dspopoddspopodspodspodspodspodsopdspodspodsopdspodsds}
Let $\MFree(S)\ni  \Sigma,\tilde{\Sigma}\subset S\cong \UE$ be given, with
\begingroup
\setlength{\leftmargini}{12pt}
\begin{itemize}
\item 
$\Sigma$-decomposition $\Sigma_0,\dots,\Sigma_n$ and\: $[g_0],\dots,[g_n]$ of $S$,
\item 
$\tilde{\Sigma}$-decomposition $\tilde{\Sigma}_0,\dots,\tilde{\Sigma}_{\tilde{n}}$ and\: $[\tilde{g}_0],\dots,[\tilde{g}_{\tilde{n}}]$ of $S$. 
\end{itemize}
\endgroup
\noindent
Then,   $\tilde{n}=n$ holds.
\end{lemma}
\begin{proof}
We only prove $\tilde{n}\leq n$, because $n\leq \tilde{n}$ follows analogously:  
\begingroup
\setlength{\leftmargini}{12pt}
\begin{itemize}
\item
By Point \ref{defo00}, we have $S=\bigcup_{k=0}^{\tilde{n}}\tilde{\Sigma}_k$, hence $\innt[\Sigma_0]\cap \tilde{\Sigma}_p\neq \emptyset$ for some $0\leq p\leq \tilde{n}$.   
Remark \ref{cnmnmcviufeiureiure}.\ref{cnmnmcviufeiureiure1} yields 
\begin{align}
\label{kjdskjdsnmdsnmdsnbdszdsdsds98ds98ds98ds98dsdsds}
\begin{split}
\iota|_{\Sigma_0}\cpsim \iota|_{\tilde{\Sigma}_p}\qquad\quad&\Longrightarrow\qquad\quad \hspace{37.5pt}
\tilde{g}_p^{-1}\cdot \iota|_{\Sigma_0}\cpsim \iota|_{\tilde{\Sigma}_0}\qquad\quad\\[2pt]
&\Longrightarrow\qquad\quad
\underbrace{(\tilde{g}_\ell\cdot (\tilde{g}_p)^{-1})}_{\displaystyle =: q_\ell}\:\cdot \:\iota|_{\Sigma_0}\cpsim \iota|_{\tilde{\Sigma}_\ell}\qquad\quad\forall\: 0\leq \ell\leq \tilde{n}.
\end{split}
\end{align}
\vspace{-10pt}
\item
We have $\{[q_0],\dots, [q_{\tilde{n}}]\}\subseteq \{[g_0],\dots,[g_n]\}$, because Lemma \ref{compnobound} together with \eqref{kjdskjdsnmdsnmdsnbdszdsdsds98ds98ds98ds98dsdsds} yields for $\ell=0,\dots, \tilde{n}$:
\begin{align*}
q_\ell\cdot \iota(\Sigma_0)=\iota(\Sigma')\quad\text{for} \quad \Seg(S)\ni \Sigma' :=\iota^{-1}(q_\ell\cdot \iota(\Sigma_0)) 
\qquad\quad
\stackrel{\eqref{fdsdfd}}{\Longrightarrow}\qquad\quad  [q_\ell]\in \{[g_0],\dots,[g_n]\}
\end{align*}
\item
The classes $[q_0],\dots, [q_{\tilde{n}}]$ are mutually different, hence  $\tilde{n}\leq n$ holds by the previous point:
\vspace{1pt}

{\it Proof.} Assume that $[q_\ell]=[q_{\ell'}]$ holds for some  $0\leq \ell,\ell'\leq \tilde{n}$, hence $q_\ell=q_{\ell'}\cdot h$ for some $h\in G_S$.     
We have $\hat{g}:= (\tilde{g}_p)^{-1} \in \overlap(S)$  by \eqref{dslkjkjskjds}, hence $\hat{h}:=\conj_{\hat{g}}(h)\in G_S$ by Corollary \ref{aaaafsdfsfdfs}. We  obtain
$$
\tilde{g}_\ell \cdot \hat{g}=q_\ell=q_{\ell'}\cdot h= (\tilde{g}_{\ell'} \cdot \hat{g})\cdot h
= \tilde{g}_{\ell'} \cdot \hat{h}\cdot \hat{g}
\qquad\Longrightarrow\qquad \tilde{g}_\ell=\tilde{g}_{\ell'} \cdot \hat{h} \qquad\Longrightarrow\qquad [\tilde{g}_{\ell}]=[\tilde{g}_{\ell'}].
$$
The right side implies $\ell=\ell'$, because the classes $\{[\tilde{g}_0],\dots,[\tilde{g}_{\tilde{n}}]\}$ are mutually different by Remark \ref{sdffdsfsd}.\ref{sdffdsfsd1}. 
\qedhere 
\end{itemize}
\endgroup
\end{proof}
\begin{definition}
\label{dskjdslkjdskjdsnmnmdsdsdsoidsoioidsds98ds9898ds0ds09ds}
Let $S\cong \UE$, and assume that there exists some $\MFree(S)\ni  \Sigma\subset S$ that admits a $\Sigma$-decomposition $\Sigma_0,\dots,\Sigma_n$, $[g_0],\dots,[g_n]$ of $S$. In this case, we define 
 the length of $S$ by $\lent{S}:=n\in \NN_{\geq 1}$. According to   Lemma \ref{dspopoddspopodspodspodspodspodsopdspodspodsopdspodsds}, this is well defined (\he{}i.e.\ independent of the explicit choice of $\Sigma$\he). 
\end{definition}
\noindent
We finally have to prove the existence of a $\Sigma$-decomposition for some given (necessarily compact) maximal segment $\MFree(S)\ni \Sigma\subset S\cong \UE$. For this, we first  observe the following:
\begin{remark}
\label{sdoidoidsoidsoidsoioidsoidsoisddsds98s98s98d98ds98}
Let\:  
$\MFree(S)\ni \Sigma_-\subset S\cong\UE$ be given, with  
boundary points $\partial_{\Sigma_-}=\{z_-,z_+\}$. Then, there exist $\GES\ni [h_+]\neq [e]$ and $\Sigma_+\in \MFree(S)$ compact  with
\begin{align}
\label{nmdsnmdsdsiiudsiudsds887ds87ds76ds76ds76s76dsdsds}
z_+\in \Sigma_-\cap\Sigma_+= \partial_{\Sigma_-}\cap \partial_{\Sigma_+}\qquad\text{as well as}\qquad  h_+\cdot \iota(\Sigma_-)=\iota(\Sigma_+).
\end{align} 
{\it Proof.} 
We set $z:=z_+$ and $\Sigma:=\Sigma_-$, and  
choose $[g]$, $\Sigma_b$, $\Sigma_z$ as in \ref{starlkdslkdslkdslksdlkdslklkdsdsdsds} in Proposition \ref{prop:shifttrans}:
\begingroup
\setlength{\leftmargini}{12pt}
\begin{itemize}
\item
We have $\varrho(\Sigma_b)=\Sigma_z$ for the analytic diffeomorphism\footnote{Observe that $\iota|_{\Sigma_b},\iota|_{\Sigma_z}$ are embeddings by compactness of $\Sigma_b,\Sigma_z$. Hence,   $\varrho$ is an analytic diffeomorphism by Corollary \ref{fddsfd}.} $\varrho:= (\iota|_{\Sigma_z})^{-1}\cp (g\cdot \iota|_{\Sigma_b})\colon \Sigma_b\rightarrow \Sigma_z$.
\item
Set $\LL:= \Sigma_b\subseteq \Sigma_-$, $\LL':=\Sigma_z$, and   $g:=h_+$.  Then, Remark   \ref{oidsoidsoidsoidsoidsoidsdsdsds454545remk}.\ref{nmdsnmdsnmdsnmh2}   
shows that 
\begin{align*}
\Sigma_+:=\ovl{\varrho}(\Sigma_-)\quad \text{is compact, with }\quad
\Sigma_+\in \MFree(S), \quad\Sigma_-\cap\Sigma_+= \partial_{\Sigma_-}\cap \partial_{\Sigma_+},\quad  h_+\cdot \iota(\Sigma_-)=\iota(\Sigma_+).
\end{align*}
We additionally 
  have $z_+\in \Sigma_-\cap\Sigma_+$, 
 because 
$
\Sigma_-\ni z_+=z\in \Sigma_z = \varrho(\Sigma_b)\subseteq\ovl{\varrho}(\Sigma_-)=\Sigma_+$ holds. 
\hspace*{\fill}$\ddagger$
\end{itemize}
\endgroup
\end{remark}
\noindent
Now, let $\Sigma$ be as above,    
write $\partial_{\Sigma}=\{z_0,z_1\}$, and fix a homeomorphism $\homeo\colon \UE\rightarrow S$ with\hspace*{\fill}(Remark \ref{dslkdslkdslklkdslkdslkslkslkdsdsds}) 
$$
\homeo(\e^{\I [\varphi_0,\varphi_1]})=\Sigma\quad\wedge\quad
\homeo(\e^{\I \varphi_0})=z_0\quad\wedge\quad \homeo(\e^{\I \varphi_1})=z_1\qquad    
\text{for (unique) angles}\qquad 0=\varphi_0<\varphi_1< 2\pi.$$
We let $\CI$ denote the set of all pairs $\mu\equiv (\{\alpha_k\}_{0\leq k\leq \ci},\{g_k\}_{0\leq k\leq \ci-1})$, with $\len{\mu}:=\ci\in \NN_{\geq 1}\cup\{\infty\}$, $\{\alpha_k\}_{0\leq k\leq \ci}\subseteq [0,2\pi]$ and $\{g_k\}_{0\leq k\leq \ci-1}\subseteq G$, such that the following conditions are fulfilled:
\begingroup
\setlength{\leftmargini}{15pt}
{
\renewcommand{\theenumi}{\small{\bf\alph{enumi})}} 
\renewcommand{\labelenumi}{\theenumi}
\begin{enumerate}
\item
\label{dsdsdskjsdkjdssdnbsdnbdsjdszudszusddzus8d98sd9898ds98dds1}
$\alpha_0=\varphi_0$\quad$\wedge$\quad $\alpha_1=\varphi_1$\qquad\:\:{}as well as\qquad\:\:\hspace{6.8pt}$\alpha_{k}<\alpha_{k+1}$\quad\:{}for\quad $0\leq k\leq\ci$.
\item
\label{dsdsdskjsdkjdssdnbsdnbdsjdszudszusddzus8d98sd9898ds98dds2}
$\Sigma_k:=\homeo(\e^{\I[\alpha_k,\alpha_{k+1}]})\in \MFree(S)$
\quad{}with\quad\:$g_k\cdot \iota(\Sigma_0)=\iota(\Sigma_k)$\quad{}for\quad$0\leq k\leq \ci-1$. 
\end{enumerate}}
\endgroup
\noindent
For $1\leq \hat{\ci}\leq \ci$, we set $\mu|_{\hat{\ci}}:=\{(\alpha_k,g_k)\}_{0\leq k\leq \hat{\ci}}$. 
\vspace{6pt}

\noindent
We let  
 $\CI$ be partially ordered by:
 \vspace{-5pt}
\begin{align*}
	\mu\leq \tilde{\mu}\quad\text{for}\quad \mu,\tilde{\mu}\in \CI \qquad\quad\stackrel{{\rm def.}}{\Longleftrightarrow}\qquad\quad \len{\mu}\leq \len{\tilde{\mu}}\quad\wedge\quad \mu=\tilde{\mu}|_{\len{\mu}}.
\end{align*}
Then, $\CI\neq \emptyset$ holds (consider $\ci=1$), and each chain in $\CI$ admits the obvious upper bound. Hence, Zorn's lemma provides a maximal element $\CI\ni \mu \equiv (\{\alpha_k\}_{0\leq k\leq \ci},\{g_k\}_{0\leq k\leq \ci-1})$:
\begingroup
\setlength{\leftmargini}{18pt}
{
\renewcommand{\theenumi}{\small{\bf\Alph{enumi})}} 
\renewcommand{\labelenumi}{\theenumi}
\begin{enumerate}
\item
\label{wclm1}
We have $\ci\in \NN_{\geq 1}$.

{\it Proof.}
If the claim is wrong, then 
$\ci=\infty$ holds; hence, 
$\alpha:=\lim_n\alpha_n\leq 2\pi$ exists by monotonicity of $\{\alpha_n\}_{n\in \NN_{\geq 1}}$. Then, to each neighbourhood $V\subseteq \UE$ of $s:=\e^{\I \alpha}$, there  exists some $N\in \NN$ with $\homeo^{-1}(\Sigma_{N})=\e^{(\I[\alpha_{N},\alpha_{N+1}])}\subseteq V$. Since $\iota\cp\homeo$ is continuous, to each neighbourhood $U$ of $x:=(\iota\cp\homeo)(s)$, there thus  exist some $N\in \NN$ with $g_N\cdot \iota(\Sigma_0)=\iota(\Sigma_N)\subseteq U$. This, however,   contradicts that $\wm$ is non-contractive. 
\hspace*{\fill}$\ddagger$
\item
\label{wclm2}
We have $\alpha_\ci=2\pi$ (in particular, $\ci \geq 2$ as $\alpha_1=\varphi_1<2\pi$).

{\it Proof.}  Remark \ref{sdoidoidsoidsoidsoioidsoidsoisddsds98s98s98d98ds98} applied to $\Sigma_-\equiv \Sigma_{\ci-1}$ and $z_+\equiv \homeo(\e^{\I \alpha_\ci})$ yields some $\GES\ni [h_{\ci}]\neq [e]$ and some $\Sigma_{\ci}\in \MFree(S)$, such that 
\begin{align}
\label{dskjdskjdsiudszudszudszuds76ds76ds76ds76dsdsdsds}
z_+\in \Sigma_{\ci-1}\cap\Sigma_{\ci}= \partial_{\Sigma_{\ci-1}}\cap \partial_{\Sigma_{\ci}}\qquad\text{as well as}\qquad  h_+\cdot \iota(\Sigma_{\ci-1})=\iota(\Sigma_{\ci})\qquad\text{holds}.
\end{align} 
\begingroup
\setlength{\leftmarginii}{12pt}
\begin{itemize}
\item
For $g_{\ci}:=h_+\cdot g_{\ci-1}$, we have
\vspace{-8pt}
\begin{align}
\label{dsnmnmdsdshcxcxiucxikjwlkew122121wqqqw989sd9898dssdcx}
g_{\ci}\cdot \iota(\Sigma_0)=h_+\cdot (g_{\ci-1} \cdot \iota(\Sigma_0))=h_+\cdot \iota(\Sigma_{\ci-1})\stackrel{\eqref{dskjdskjdsiudszudszudszuds76ds76ds76ds76dsdsdsds}}{=}\iota(\Sigma_{\ci}).
\end{align}
\item
Choose (Remark \ref{dslkdslkdslklkdslkdslkslkslkdsdsds}) $\RR\ni \varphi'<\varphi\in \RR$ with 
\begin{align}
\label{podsoidskjdskjdskjdsds8798ds89ds98dsdsdsdsd}
\Sigma_{\ci}=\homeo(\e^{\I [\varphi',\varphi]})\qquad\wedge\qquad \partial_{\Sigma_{\ci}}= \{\homeo(\e^{\I \varphi'}),\homeo(\e^{\I \varphi})\}.
\end{align} 
Then, the left side of \eqref{dskjdskjdsiudszudszudszuds76ds76ds76ds76dsdsdsds} implies $\alpha_{\ci}=\varphi'+2\pi\cdot \ell'$ for some $\ell'\in \ZZ$, and we set  
$\alpha_{\ci+1}:=\varphi+2\pi\cdot \ell'$. Then, we have
\vspace{-7pt} 
\begin{align}
\label{sddsnmdsnmdskjsdsdiuwiuoiuewewwepowepoewwekjsdkj87sd87s9898s}
\alpha_{\ci}<\alpha_{\ci+1}\qquad\text{with}\qquad \homeo(\e^{\I [\alpha_\ci,\alpha_{\ci+1}]})=\Sigma_{\ci}\in \MFree(S).
\end{align}
{\it Proof of the Implication.} 
By \eqref{podsoidskjdskjdskjdsds8798ds89ds98dsdsdsdsd} and the left side of \eqref{dskjdskjdsiudszudszudszuds76ds76ds76ds76dsdsdsds}, we have\hspace*{\fill}($z_+=\e^{\I \alpha_\ci}$)
\begin{align*}
\text{either}\qquad& 
z_+=\homeo(\e^{\I (\varphi')})\qquad \Longrightarrow\qquad \alpha_\ci=\varphi'+2\pi\cdot \ell'\quad\text{for some}\quad \ell'\in \ZZ\\
\text{or}\qquad& 
z_+=\homeo(\e^{\I (\varphi)})\qquad\hspace{2.8pt}  \Longrightarrow\qquad\alpha_\ci=\varphi\hspace{2.6pt}+2\pi\cdot \ell\hspace{3pt}\quad\text{for some}\quad \ell\hspace{2.7pt}\in \ZZ.
\end{align*} 
In the second case (second line), for $\alpha_{\ci+1}:=\varphi'+2\pi\cdot \ell$, we have 
$$
\alpha_{\ci+1}<\alpha_{\ci}\qquad\text{with}\qquad   \homeo(\e^{\I [\alpha_{\ci+1},\alpha_{\ci}]})=\Sigma_{\ci}.\hspace{36.8pt}
$$
Since $\Sigma_{\ci-1}= \homeo(\e^{\I [\alpha_{\ci-1},\alpha_{\ci}]})$ holds, this contradicts 
the left side of  \eqref{dskjdskjdsiudszudszudszuds76ds76ds76ds76dsdsdsds} (specifically, that $\Sigma_{\ci-1}\cap\Sigma_{\ci}$ contains at most two elements).
\hspace*{\fill}$\ddagger$
\item
If $\alpha_\ci < 2\pi$, then $\alpha_{\ci+1}\leq 2\pi$.
\vspace{3pt}

\noindent
{\it Proof.} If $\alpha_{\ci+1}> 2\pi$ holds, then we have
\begin{align*}
	\iota|_{\Sigma_{\ci}}\cpsim \iota|_{\Sigma_0}\qquad
	&\stackrel{\phantom{\Sigma_0\:=\:\Sigma\:{\rm free}}}{\Longrightarrow}\quad\:\: g_{\ci}\cdot \iota|_{\Sigma_0}\cpsim \iota|_{\Sigma_0}\\
	\quad\:\:&\stackrel{\Sigma_0\:=\:\Sigma\:{\rm free}}{\Longrightarrow}\quad\:\: g_{\ci}\in G_S\\
	\qquad&\stackrel{\phantom{\Sigma_0\:=\:\Sigma\:{\rm free}}}{\Longrightarrow}\quad\:\: \iota(\Sigma_{\ci})\stackrel{\eqref{dsnmnmdsdshcxcxiucxikjwlkew122121wqqqw989sd9898dssdcx}}{=} g_{\ci}\cdot \iota(\Sigma_0)=\iota(\Sigma_0)\\
	\qquad &\stackrel{\phantom{\Sigma_0\:=\:\Sigma\:{\rm free}}}{\Longrightarrow}\quad\:\: 
	\Sigma_{\ci}=\Sigma_0\\
	\qquad &\stackrel{\phantom{\Sigma_0\:=\:\Sigma\:{\rm free}}}{\Longrightarrow}\quad\:\: \e^{\I [\alpha_\ci,\alpha_{\ci+1}]}=\e^{\I [\alpha_0,\alpha_{1}]},
\end{align*}
which implies $\alpha_m=2\pi$ as $0=\alpha_0<\alpha_1<\alpha_\ci\leq 2\pi$ holds.
\hspace*{\fill}$\ddagger$ 
\end{itemize}
\endgroup 
Assume now that $\alpha_\ci<2\pi$ holds (i.e., that the claim is wrong).  
Then, $\alpha_{\ci+1}\leq 2\pi$ holds by the third point, hence  
$\tilde{\mu}:=(\{\alpha_k\}_{0\leq k\leq \ci+1},\{g_k\}_{0\leq k\leq \ci})\in \CI$  holds by \eqref{sddsnmdsnmdskjsdsdiuwiuoiuewewwepowepoewwekjsdkj87sd87s9898s}. Since $\mu <\tilde{\mu}$ holds by construction ($\mu=\tilde{\mu}|_\ci$ with $\len{\mu}=\ci <\ci+1=\len{\tilde{\mu}}$), this contradicts maximality of $\mu$. 
\end{enumerate}}
\endgroup
\noindent
Altogether, we obtain the following statement:
\begin{proposition}
\label{ghdgddgf}
Let $S\cong \UE$ be free with $S\notin \Free(S)$.    
\begingroup
\setlength{\leftmargini}{15pt}
{
\renewcommand{\theenumi}{{\arabic{enumi})}} 
\renewcommand{\labelenumi}{\theenumi}
\begin{enumerate}
\item
\label{ghdgddgf1}
We have $\MFree(S)\neq \emptyset$, and each $\Sigma\in \MFree(S)\neq \emptyset$ is compact with $\Sigma\subset S=\innt[S]$. 
\item
\label{ghdgddgf2}
Each  $\Sigma\in \MFree(S)$ admits a $\Sigma$-decomposition that is unique in the sense of Lemma \ref{asdhlkdsajkd}.
\item
\label{ghdgddgf3}
The length $\lent{S}\in \NN_{\geq 1}$ of $S$ is a well-defined quantity.
\end{enumerate}}
\endgroup 
\end{proposition}
\begin{proof}
\begingroup
\setlength{\leftmargini}{15pt}
{
\renewcommand{\theenumi}{{\arabic{enumi})}} 
\renewcommand{\labelenumi}{\theenumi}
\begin{enumerate}
\item
Clear from Remark \ref{kjfdjlkfdjkfd}. 
\item
Let $\mu\in \CI$ be maximal and $\ci:=\len{\mu}$. Then, the Points \ref{wclm1} and \ref{wclm2} show that the classes $[g_0],\dots,[g_\ci]$ and the (compact) maximal segments $\Sigma_0,\dots,\Sigma_\ci$ form a $\Sigma$-decomposition of $S$. 
\item
Clear from Point \ref{ghdgddgf1} and Lemma \ref{dspopoddspopodspodspodspodspodsopdspodspodsopdspodsds}  (see Definition \ref{dskjdslkjdskjdsnmnmdsdsdsoidsoioidsds98ds9898ds0ds09ds}).\qedhere 
\end{enumerate}}
\endgroup
\end{proof}

\subsubsection{Homeomorphic to an Interval ($S\cong \ID$)}
\label{fghgfasa2}
In this subsection, we assume that $S\cong\ID$ holds ($S$ is   homeomorphic to an interval). 

\noindent
Let $\CN$ denote the set of all subsets of $\ZZ$ that are of the form\footnote{Of course, if $\cn_-=-\infty$ holds, then both $\cn_-\leq n$ and $\cn_-<n$ means $n\in \ZZ$. Analogously, if $\cn_+=\infty$ holds, then both $n\leq \cn_+$ and $n< \cn_+$ means $n\in \ZZ$.} 
\begin{align*}
	\cn=\{n\in \ZZ \: | \: \cn_{-} \leq n \leq \cn_+\}\qquad\text{for}\qquad  \cn_-,\cn_+ \in \ZZ_{\neq 0}\cup \{-\infty,\infty\} \qquad\text{with}\qquad \cn_-< 0< \cn_+.
\end{align*}
For $\cn\in \CN$ fixed, we define
$$
\cnN:=\cn\cap \ZZ_{\neq 0}\qquad\:\:\text{as well as}\qquad\:\: \cnK:= \{n\in \ZZ\:|\: \cn_-<n<\cn_+ \}.\quad
$$
\begin{definition}
\label{deco}
Assume that $S\cong \ID$ holds,      
and let $\MFree(S)\ni\Sigma\subset\innt[S]$ be compact. 
A $\Sigma$-decomposition of $S$ is a pair $(\{\Sigma_n\}_{n\in \cn},\{[g_n]\}_{n\in \cn})$ (with $\cn\in \CN$) 
that consists of classes 
$\{[g_n]\}_{n\in \cn}\subseteq \GES$ as well as segments $\{\Sigma_n\}_{n\in \cn}\subseteq \Seg(S)$ on which $\iota$ is an embedding, such that the following holds:
\begingroup
\setlength{\leftmargini}{15pt}
{
\renewcommand{\theenumi}{{\bf\small\arabic{enumi})}} 
\renewcommand{\labelenumi}{\theenumi}
\begin{enumerate}
\item
\label{ddefo0}
$\Sigma_0=\Sigma$, and $\Sigma_{\cn_\pm}$ is a boundary segment of $S$ if $\cn_{\pm}\neq \pm\infty$ holds.
\item
\label{ddefo1}
$\Sigma_p\cap\Sigma_q=\partial_{\Sigma_p}\cap \partial_{\Sigma_q}$ is singleton for $|p-q|=1$, and empty for $|p-q|\geq 2$.
\item
\label{ddefo2}
$g_n\cdot \iota(\Sigma_0)=\iota(\Sigma_n)$ holds for all $\cn_-<n<\cn_+$, and we have
\begin{align} 
\label{kjdskjdsnmdsnmdsnmdsiudsiuds987dsds98ds98ds98dsdsds}
	g_{\cn_{-}}\cdot \iota(\Sigma_{-})=\iota(\Sigma_{\cn_-})\quad\text{if}\quad \cn_-\neq -\infty\qquad\:\:\text{as well as}\qquad\:\: g_{\cn_{+}}\cdot \iota(\Sigma_{+})=\iota(\Sigma_{\cn_+})\quad\text{if}\quad \cn_+\neq \infty
\end{align}
for certain boundary segments $\Sigma_\pm$ of $\Sigma_0$.
\end{enumerate}}
\endgroup
\end{definition}
\begin{remark}
\label{dfxhbghg}
Assume that we are in the situation of Definition \ref{deco}.
\begingroup
\setlength{\leftmargini}{15pt}
{
\renewcommand{\theenumi}{{\sf\arabic{enumi})}} 
\renewcommand{\labelenumi}{\theenumi}
\begin{enumerate}
\item
\label{sd9898dsoiewioewuizewzuuz34387kjredsmx}
It follows inductively from Point \ref{ddefo1} that there exists $\{z_n\}_{n\in \cnN}\subseteq S$   
with
\begin{align}
\label{iuiuiujkjhggftfdfdfd54545454545454545trrttrtrtr}
\{z_n\}=
\begin{cases} 
	\Sigma_{n+1}\cap \Sigma_n=\partial_{\Sigma_{n+1}}\cap\partial_{\Sigma_n} &\:\:\mbox{for}\quad\:\: \cn\ni  n\leq -1\\ 
	\Sigma_{n-1}\cap \Sigma_n=\partial_{\Sigma_{n-1}}\cap\partial_{\Sigma_n} &\:\:\mbox{for}\quad\:\: 1\leq n\in \cn. 
\end{cases} 
\end{align}
Moreover, let $\homeo\colon \ID\rightarrow S$ be a fixed  homeomorphism with $\homeo^{-1}(z_{-1})<\homeo^{-1}(z_1)$, and define $\{a_n\}_{n\in \cnN}\subseteq \ID$ by $a_n:=\homeo^{-1}(z_n)$ for all $n\in \cnN$. 
Then, the points \ref{ddefo1} and \ref{ddefo2} imply the following:
\begingroup
\setlength{\leftmarginii}{11pt}
\begin{itemize}
\item  
 $a_m<a_n$\qquad\hspace{1.7pt}\:\:\: holds for all\quad\: $\cnN  \ni m<n\in \cnN$,
 \vspace{2pt}
\item 
 $\homeo(A_n)=\Sigma_n$\quad\: holds for all\quad\hspace{2.2pt} $n\in \cn$,\quad\: with\:\footnote{For the second line, observe that $\Sigma_{\cn_\pm}$ are boundary segments of $S$.}
\begin{align*}
A_n:=[a_{n-1},a_{n}]\quad\text{for}\quad \cn_-< n\leq -1, \qquad\quad A_0:=[a_{-1},a_1],\qquad\quad A_n:=[a_n,a_{n+1}]\quad\text{for}\quad 1\leq n<\cn_+,\\[6pt]
A_{\cn_-}:=\ID\cap(-\infty,a_{\cn_-}]\quad\text{if}\quad \cn_-\neq -\infty\qquad\text{and}\qquad A_{\cn_+}:=\ID\cap [a_{\cn_+},\infty)\quad\text{if}\quad \cn_+\neq \infty.\hspace{36pt}
\end{align*}
\end{itemize}
\endgroup   
\item
We have $[g_0]=[e]$:
$$
g_0\cdot \iota(\Sigma_0)\stackrel{\ref{ddefo1}}{=}\iota(\Sigma_0)\qquad\stackrel{\iota|_{\Sigma_0}\hspace{2pt}{\rm embedding}}{\Longrightarrow}\qquad g_0\cdot \iota|_{\Sigma_0}\cpsim\iota|_{\Sigma_0}\qquad\:\:\stackrel{\Sigma\:\in\:\Free(S)}{\Longrightarrow}\qquad\:\: g_0\in G_S.
$$
\item
\label{dfxhbghg9}
We have the implication:\hspace*{\fill}(\he$\iota$ injective\he)
\vspace{-3pt}
\begin{align*}
	[g_m]=[g_n]\quad\text{for}\quad \cn_-\ni m< n\in \cn_+\qquad\:\:\stackrel{\ref{ddefo2}}{\Longrightarrow}\qquad\:\: -\infty<\cn_-=m<n=\cn_+<\infty. 
\end{align*}
\item 
\label{dfxhbghgFree}
By assumption, $\iota|_{\Sigma_n}$ is an embedding for each $n\in \cn$, and we have $\Sigma_0=\Sigma\subset\innt[S]$. Hence, Corollary \ref{fddsfd} and Lemma \ref{ghfhh} show:
\vspace{-1pt}
\begingroup
\setlength{\leftmarginii}{11pt}
\begin{itemize}
\item
We have $\{\Sigma_n\}_{n\in \cn}\subseteq \Free(S)$.
\vspace{1pt}
\item 
We have the implication (the left side in particular holds for all $\cn_-<n<\cn_+$)
\begin{align*}
g_n\cdot \iota(\Sigma_0)=\iota(\Sigma_n)\quad \text{for}\quad n\in \cn\qquad\quad\Longrightarrow\qquad\quad \Sigma_n\:\:\text{compact}\quad\wedge\quad \Sigma_n\in \MFree(S). 
\end{align*}
\vspace{-22pt}
\end{itemize}
\endgroup
\item
\label{dfxhbghg998ds98ds}
We define  $\{h_n\}_{n\in \cnN}\subseteq G$  by
\begin{align}
\label{dsndsnmdsnmdsjdskjdskjdsoidsoidsoisdds98ds9898ds98dsdsdd}
\begin{split}	
 h_n:=g_{n}\cdot g_{n+1}^{-1}\qquad\forall\: \cn\ni n\leq -1 \qquad
\quad&\text{as well as}\qquad\quad	
	h_n:=g_n\cdot g_{n-1}^{-1}  \qquad\hspace{6pt}\forall\: 1\leq n\in \cn\\
	&\hspace{10pt}\text{hence}\\
g_n=h_n\cdot {\dots}\cdot h_{-1} \qquad\forall\: \cn\ni n\leq -1	
	 \qquad
\quad&\text{as well as}\qquad
\quad g_n=h_n\cdot {\dots}\cdot h_1 \qquad\forall\: \cn\ni n\geq 1.\hspace{2.2pt}
\end{split}
\end{align}
Then, we have 
\begin{align}
\label{dasposaposao988mdsjdskjdskjdsoidsoidsoisd98ds98dsdsdd11}
\begin{split}
h_n\cdot \iota(\Sigma_{n+1})=\iota(\Sigma_n)\qquad\forall\: \cn_{-}< n\leq -1\qquad\quad&\text{and}\qquad\quad  
h_n\cdot \iota(\Sigma_{n-1})=\iota(\Sigma_n)\qquad\forall\: 1\leq n<\cn_+\\
\text{as }&\text{well as}\\
\: h_{\cn_{-}}\cdot \iota(\Sigma'_{-})=\iota(\Sigma_{\cn_-})\quad\text{if}\quad \cn_-\neq -\infty\qquad\quad&\text{and}\qquad\quad h_{\cn_{+}}\cdot \iota(\Sigma'_{+})=\iota(\Sigma_{\cn_+})\quad \text{if}\quad \cn_+\neq \infty
\end{split}
\end{align}
for the boundary segment\:  
	$\Sigma'_\pm:=((\iota|_{\Sigma_{\cn_\pm \mp1}})^{-1}\cp (g_{\cn_{\pm}\mp 1}\cdot \iota|_{\Sigma_0}))(\Sigma_\pm)$\: of\: $\Sigma_{\cn_\pm\mp 1}$. 
\item
\label{dfxhbghg3}
Let $\cn_-<p<\cn_+$ be given, and set $\Sigma':=\Sigma_p$ as well as 
$$
\cn'_\pm:=\cn_\pm -p
\qquad\text{and}\qquad
\cn':=\{n\in \ZZ\: | \: \cn'_{-} \leq n \leq \cn'_+\}\in \CN.
$$ 
Then, $(\{\Sigma'_n\}_{n\in \cn'},\{[g'_n]\}_{n\in \cn'})$ defined by
\begin{align*}
\Sigma'_n:=\Sigma_{n+p}\quad\:\:\text{and}\qquad [g'_n]:=[g_{n+p}\cdot g^{-1}_{p}]\qquad\quad\forall\: n\in \cn'
\end{align*}
is a $\Sigma'$-decomposition of $S$.
\item
\label{dfxhbghg2}
Define $\ocn_\pm:=-\cn_\mp$ as well as  
$\ocn:=\{n\in \ZZ\: | \: \ocn_{-} \leq n \leq \ocn_+\}\in \CN.$ 
Then, $(\{\ovl{\Sigma}_n\}_{n\in \ocn},\{[\ovl{g}_n]\}_{n\in \ocn})$ defined by
\begin{align*}
 \ovl{\Sigma}_n:=\Sigma_{-n}\quad\:\:\text{as well as}\qquad [\ovl{g}_n]:=[g_{-n}]\qquad\quad\forall\: n\in \ocn
\end{align*} 
is a $\Sigma$-decomposition of $S$.
\item
\label{dfxhbghg1}
We have $S=\bigcup_{n\in \cn}\Sigma_n$.
\vspace{-5pt} 
\begin{proof}
Let $\homeo\colon \ID\rightarrow S$ and $\{A_n\}_{n\in \cn}$ be as in Point \ref{sd9898dsoiewioewuizewzuuz34387kjredsmx}.  
It suffices to show $\ID=D:=\bigcup_{n\in \cn}A_n$. If $-\infty<\cn_-<\cn_+<\infty$ holds, this is immediate from Point \ref{sd9898dsoiewioewuizewzuuz34387kjredsmx}.  
Assume thus that $\cn_+=\infty$ holds (the case $\cn_-=-\infty$ is treated analogously) and that there exists some $a_1<t\in \ID\setminus D$.   
Then, $\lim_{n\rightarrow \infty} a_n=a\leq t\in \ID$ holds for some $a\in D$, and we conclude as follows:
\vspace{-3pt}
\begingroup
\setlength{\leftmarginii}{11pt}
\begin{itemize}
\item
Given $\varepsilon>0$, there exists some $n_\varepsilon\geq 1$ with $A_{n_\varepsilon}\subseteq [a-\varepsilon,a]$.
\item
Since $\iota$ is continuous, for each neighbourhood $U$ of $(\iota\cp\homeo)(a)$, there exists some $\varepsilon>0$ with $$(\iota\cp\homeo)([a-\varepsilon,a])\subseteq U
\qquad\quad\Longrightarrow\qquad\quad g_{n_U}\cdot \iota(\Sigma_0)=\iota(\Sigma_{n_\varepsilon})=\iota((\iota\cp\homeo)(A_{n_\varepsilon}))\subseteq U. 
$$  
\end{itemize}
\endgroup   
Since $|\Sigma_0|\geq 2$ holds, the second point contradicts that   
$\wm$ is non-contractive.
\end{proof}
\end{enumerate}}
\endgroup	
\end{remark}
\begin{lemma}
\label{dfggfgf}
Assume that $S\cong \ID$ holds,      
and let $\MFree(S)\ni\Sigma\subset\innt[S]$ be compact with $\Sigma$-decomposition as in Definition \ref{deco}. Let  $g\in G$, and let $\Seg(S)\ni\OO,\OO'\subseteq S$  be open segments such that $\iota|_\OO, \iota|_{\OO'}$ are embeddings with $\OO\subseteq \Sigma_0$. Then,  
 the following implication holds:
\begin{align}
\label{dfgfg}
 (g\cdot \iota|_{\Sigma_0})(\OO)=\iota(\OO')\qquad\Longrightarrow\qquad [g]=[g_n]\:\:\text{ and }\:\:\OO'\subseteq \Sigma_n\:\:\text{ holds for }\:\: n\in \cn\:\:\text{ unique}.
\end{align} 
In particular, a $\Sigma$-decomposition is unique up to change of orientation as described in Remark \ref{dfxhbghg}.\ref{dfxhbghg2}.
\end{lemma}
\begin{proof}
First observe that the uniqueness statement in \eqref{dfgfg} is clear from the condition \ref{ddefo1} in Definition \ref{deco} (if $\OO'\subseteq \Sigma_m\cap\Sigma_n$ for $m,n\in \cn$, then $m=n$ as $|\OO'|>1$). 
Assume now that the left side of \eqref{dfgfg} holds: 
\vspace{6pt}

\noindent
According to Remark \ref{dfxhbghg}.\ref{dfxhbghg1}, we have
$$
\textstyle S=\bigcup_{n\in \cn}\Sigma_n\qquad\quad\text{hence}\qquad\quad 
Z:=\{n\in \cn\:|\:\OO'\cap \Sigma_n\neq \emptyset \}\neq \emptyset.
$$ 
Let $n\in Z$ be given. 
According to Remark \ref{cnmnmcviufeiureiure}.\ref{cnmnmcviufeiureiure1}, there  exists an open segment $\UU'\subseteq \Sigma_n\cap \OO'$, so that the assumed embedding properties (first implication) yield: 
\begin{align*}
(g\cdot \iota|_{\Sigma_0})(\OO)=\iota(\OO')\qquad\quad&\Longrightarrow\qquad\quad  g\cdot \iota|_{\Sigma_0}\cpsim \iota|_{\Sigma_n}
\\
\qquad&\stackrel{\ref{ddefo2}}{\Longrightarrow}\qquad\quad  g\cdot \iota|_{\Sigma_0}\cpsim  g_n\cdot \iota|_{\Sigma_0}
\qquad\:\: \stackrel{\Sigma_0=\Sigma\: \text{free}}{\Longrightarrow}\qquad\:\: [g]=[g_n]
\end{align*}
\begingroup
\setlength{\leftmargini}{12pt}
\begin{itemize}
\item
Assume that there exists some $Z\ni n\neq \cn_\pm$. Then, 
Remark \ref{dfxhbghg}.\ref{dfxhbghg9} shows $Z=\{n\}$, so that  $\OO'\subseteq \Sigma_n$ is clear from the definition of $Z$. 
\item
In the other case, we have $\emptyset\neq Z\subseteq \{\cn_-,\cn_+\}\cap \ZZ$, and then necessarily
\begin{align*}
	either\qquad\quad Z=\{\cn_-\}\quad\text{hence}\quad \OO'\subseteq \Sigma_{\cn_-}\qquad\quad\text{or}\qquad\quad 
	Z=\{\cn_+\}\quad\text{hence}\quad \OO'\subseteq \Sigma_{\cn_+}.  
\end{align*}
In fact, if $Z=\{\cn_-,\cn_+\}$ holds, then  connectedness of $\OO'$ together with $\Sigma_{\cn_\pm}\cap \OO'\neq \emptyset$, implies $\Sigma_n\subseteq \OO'$ for all $\cn_-<n<\cn_+$ (recall that $\Sigma_\pm$ are boundary segments of $S\cong \ID$), which contradicts $Z= \{\cn_-,\cn_+\}$.
\end{itemize}
\endgroup
\noindent
This proves the implication \eqref{dfgfg}, from which the last statement follows by the same elementary arguments that we have already used to prove \ref{nmdsnmdnmdssdkjskjkjdskjdsds7ds87ds87ds87dszuiu1} and \ref{nmdsnmdnmdssdkjskjkjdskjdsds7ds87ds87ds87dszuiu2} in  Lemma \ref{asdhlkdsajkd}. 
\end{proof}
\noindent
We now finally have to show the existence of a $\Sigma$-decomposition for some given compact $\MFree(S)\ni\Sigma\subset \innt[S]$. The argumentation is similar to that in Sect.\ \ref{fghgfasa1}:
\vspace{6pt}

\noindent
We write $\partial_{\Sigma}=\{z_{-1},z_1\}$, and fix a homeomorphism $\homeo\colon \ID\rightarrow S$ with 
$$
\homeo([c_{-1},c_1])=\Sigma\quad\wedge\quad
\homeo(c_{-1})=z_{-1}\quad\wedge\quad \homeo(c_1)=z_1\qquad    
\text{for (unique)}\qquad \ID\ni c_{-1}<c_1\in \ID.$$ 
We 
let $\CI$ denote the set of all pairs $\mu\equiv (\{a_n\}_{n\in \cnN},\{g_n\}_{n\in \cnK})$ with $\lenhhh{\mu}:=\cn\in \CN$ and $g_0=e$, such that  
the following conditions are fulfilled:
\begingroup
\setlength{\leftmargini}{17pt}
{
\renewcommand{\theenumi}{\small{\bf\alph{enumi})}} 
\renewcommand{\labelenumi}{\theenumi}
\begin{enumerate}
\item
\label{dsdsdskjsdkjdssdnbsdnbdsjdszudszusddzus8d98sd9898ds98dds1}
$a_{-1}=c_{-1}$\quad$\wedge$\quad $a_1=c_1$\qquad\:\:{}as well as\qquad\:\:$a_{m}<a_{n}$\quad\:{}for\quad $\cn\ni m<n\in \cn$.
\vspace{2pt}
\item
\label{dsdsdskjsdkjdssdnbsdnbdsjdszudszusddzus8d98sd9898ds98dds2}
$g_n\cdot \iota(\Sigma_0)=\iota(\Sigma_n)$\: holds for all\: $n\in \cnK$,\: for\: $\Sigma_0:=\homeo([a_{-1},a_1])$\: as well as 
$$
\Sigma_n:=\homeo([a_{n-1},a_{n}])\quad \text{for}\quad \cn_-<n\leq -1\qquad\:\: \text{and}\qquad\:\: \Sigma_n:=\homeo([a_n,a_{n+1}])\quad \text{for}\quad 1\leq n< \cn_+.
$$
Notably, $\Sigma_n\in \MFree(S)$ holds for all $\cn_-<n<\cn_+$ by Corollary \ref{fddsfd} and Lemma \ref{ghfhh}.   
\end{enumerate}}
\endgroup
\noindent
For 
$\CN\ni \hat{\cn}\subseteq \cn$, 
we set $\mu|_{\hat{\cn}}:=(\{a_n\}_{n\in \hcnN},\{g_n\}_{n\in \hcnK})$. 
\vspace{6pt}

\noindent
We let  
 $\CI$ be partially ordered by
 \vspace{-4pt}
\begin{align*}
	\mu\leq \tilde{\mu}\quad\text{for}\quad \mu,\tilde{\mu}\in \CI \qquad\quad\stackrel{{\rm def.}}{\Longleftrightarrow}\qquad\quad \lenhhh{\mu}\leq \lenhhh{\tilde{\mu}}\quad\wedge\quad \mu=\tilde{\mu}|_{\lenhhh{\mu}}.
\end{align*}
Then, $\CI\neq \emptyset$ holds (consider $\cn=\{-1,1\}$), and each chain in $\CI$ admits the obvious upper bound. Zorn's lemma thus provides a maximal element $\CI\ni \mu \equiv (\{a_n\}_{n\in \cnN},\{g_n\}_{n\in \cnK})$. We observe the following:
\begin{claim}
\label{wclm3}
Assume that $\cn_\pmm\neq \pmm\he \infty$ holds, with $z_\pmm:=\homeo(a_{\cn_\pmm})\in \innt[S]$.  
Then, 
there exists some $g_{\cn_\pmm}\in G$  and a boundary segment $\Sigma_\pmm$  
 of $\Sigma_0$, such that
\begin{align}
\label{nmcnmcxnmcxkjdskjdskjskjskjds989898dssddsdsdsds}
 g_{\cn_\pmm}\cdot  \iota(\Sigma_{\pmm})= \iota(\Sigma_{\cn_\pmm})\qquad\text{holds for}\qquad \Sigma_{\cn_\pmm}:=\homeo(\ID\cap [a_{\cn_\pmm},\pmm\:\infty)) 
\end{align}
and\: $\iota|_{\Sigma_{\cn_\pmm}}$\! is an embedding.
\begin{proof} 
We only discuss the case $\cn_+\neq \infty$, because the case  $\cn_-\neq -\infty$ follows analogously.  
Let $[g]$, $\Sigma_b$, $\Sigma_z$ be as in \ref{starlkdslkdslkdslksdlkdslklkdsdsdsds} in Proposition \ref{prop:shifttrans}, applied to $z:=z_+$ and $\Sigma\equiv \Sigma_{\cn_+ -1}$:
\begingroup
\setlength{\leftmargini}{12pt}
\begin{itemize}
\item
Then, $\Sigma_{\cn_+-1}\cap \Sigma_z=\partial_{\Sigma_{\cn_+-1}}\cap \partial_{\Sigma_z}=\{z\}$ holds; and we have 
$\varrho(\Sigma_b)=\Sigma_z$ for the analytic diffeomorphism\footnote{Observe that $\iota|_{\Sigma_b},\iota|_{\Sigma_z}$ are embeddings by compactness of $\Sigma_b,\Sigma_z$. Hence,   $\varrho$ is an analytic diffeomorphism by Corollary \ref{fddsfd}.} $\varrho:= (\iota|_{\Sigma_z})^{-1}\cp (g\cdot \iota|_{\Sigma_b})\colon \Sigma_b\rightarrow \Sigma_z$. We set  $\LL:= \Sigma_b\subseteq \Sigma_{\cn_+-1}$ and $\LL':=\Sigma_z$. 
\item
Remark   \ref{oidsoidsoidsoidsoidsoidsdsdsds454545remk}.\ref{nmdsnmdsnmdsnmh1}  
yields the following:
\vspace{-2pt}
\begingroup
\setlength{\leftmarginii}{12pt}
\begin{itemize}
\item
$\tilde{\Sigma}_+:=\dom[\ovl{\varrho}]\cap \Sigma_{\cn_+ -1}$ is a boundary segment of $\Sigma_{\cn_+ -1}$, 
and $\iota$ is an embedding on $\Sigma_{\cn_+}:=\ovl{\varrho}(\tilde{\Sigma}_+)$ (observe that $\Sigma_{\cn_+ -1}$ is compact).
\vspace{3pt} 
\item
$g\cdot \iota(\tilde{\Sigma}_+)=\iota(\Sigma_{\cn_+})$ holds; whereby 
$\tilde{\Sigma}_{+} \cap \Sigma_{\cn_+}= \partial_{\tilde{\Sigma}_{+}}\cap \partial_{\Sigma_{\cn_+}}
$ is empty or singleton. 
\vspace{4pt}

Together with the first point, the second statement yields the implication:  
\begin{align}
\label{dslkdslkdsdsooidsoidsdsds98ds98dsdsds}
\tilde{\Sigma}_+=\Sigma_{\cn_+-1}\qquad\:\:\Longrightarrow\qquad\:\:  \tilde{\Sigma}_+ \cap \Sigma_{\cn_+}= \partial_{\tilde{\Sigma}_+}\cap \partial_{\Sigma_{\cn_+}}=\{z\}
\end{align}
\vspace{-15pt} 
\item
If $\tilde{\Sigma}_+\subset\Sigma_{\cn_+-1}$ holds, then $\Sigma_{\cn_+}$ is a boundary segment of $S$. 
\end{itemize}
\endgroup
\item
The left side of \eqref{nmcnmcxnmcxkjdskjdskjskjskjds989898dssddsdsdsds} holds for
\begin{align*}
	g_{\cn_+}:= g\cdot g_{\cn_+-1} \qquad\quad\text{and}\qquad\quad \Sigma_+:= ((\iota|_{\Sigma_0})^{-1}\cp (g^{-1}_{\cn_{+} - 1}\cdot \iota|_{\Sigma_{{\cn_+}- 1}}))(\tilde{\Sigma}_+),
\end{align*}
whereby $\Sigma_+$ is a boundary segment of $\Sigma_0$ by the embedding properties of $\iota|_{\Sigma_0},\iota|_{\Sigma_{{\cn_+}- 1}}$.
\end{itemize}
\endgroup
\noindent
Now, the following two cases can occur:
\begingroup
\setlength{\leftmargini}{12pt}
\begin{itemize}
\item
$\tilde{\Sigma}_+\subset\Sigma_{\cn_+-1}$:\:\: Then, the right side of \eqref{nmcnmcxnmcxkjdskjdskjskjskjds989898dssddsdsdsds}  holds, because $\Sigma_{\cn_+}$ is a boundary segment of $S$.     
\item
$\tilde{\Sigma}_+=\Sigma_{\cn_+-1}$:\:\:  Then,  \eqref{dslkdslkdsdsooidsoidsdsds98ds98dsdsds} shows 
$\homeo^{-1}(\Sigma_{\cn_+})=[a_{\cn_{+}}, \hat{a}]$ for some $a_{\cn_{+}}< \hat{a}\in \ID$. We set 
$$
\hcn_-:=\cn_-,\qquad \hcn_+:=\cn_++1,\qquad \hcn:=\{n\in \ZZ\:|\: \hcn_-\leq n\leq \hcn_+\},
$$
and define $\CI\ni\hat{\mu}\equiv (\{\hat{a}_n\}_{n\in \hcnN},\{\hat{g}_n\}_{n\in \hcnK})$ by
\begingroup
\setlength{\leftmarginii}{12pt}
\begin{itemize}
\item
$\hat{g}_n:= g_n$\:\: for all\:\: $n\in \cnK$\:\: as well as\:\: $g_{\hcn_+}:=h_+\cdot g_{\cn_+}$
\item
$\hat{a}_n:=a_n$\:\: for all\:\: $n\in \cnN$\:\: as well as\:\: $\hat{a}_{\hcn_+}:=\hat{a}$.
\end{itemize}
\endgroup
\noindent
By construction, we have $\mu < \hat{\mu}$, which contradicts that $\mu\in \CI$ is maximal. \qedhere
\end{itemize}
\endgroup
\end{proof}
\end{claim}
\begin{lemma}
\label{edffds}
Assume that $S\cong \ID$ holds.  
Then, each compact maximal segment $\MFree(S)\ni\Sigma\subset \innt[S]$ admits a $\Sigma$-decomposition of $S$ that is unique in the sense of Lemma \ref{dfggfgf}. 
\end{lemma}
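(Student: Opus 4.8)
The plan is to read off existence from the inductive construction carried out immediately above the statement, and to obtain uniqueness for free from Lemma \ref{dfggfgf}. First I would formalize that construction into an honest two-sided induction. Starting from $\Sigma_0=\Sigma$ with boundary points $z_{-1},z_1\in\innt[S]$, I apply Proposition \ref{prop:shifttrans} at the outermost boundary point of the current segment; on the positive side it furnishes a class $[h_{n+1}]\neq[e]$, a compact boundary segment, and a compact free segment meeting the current $\Sigma_n$ in exactly its outer boundary point $z_{n+1}$. Case a) of this paper (whose applicability rests on Lemma \ref{fddsfd}, which forces $\ovl{\rho}$ to be defined on the whole boundary segment) then tells me that either $h_{n+1}$ carries all of $\Sigma_n$ to a fresh compact segment $\Sigma_{n+1}$, in which case I continue, or $h_{n+1}$ maps a boundary segment onto a boundary segment $\Sigma_{n+1}$ of $S$, in which case I stop and set $\cn_+:=n+1$. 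Running the same argument toward $z_{-1}$ produces the negative side. Setting $g_n:=h_n\cdots h_1$ (and symmetrically for negative indices) and using the cocycle relation \eqref{qwepokfdjkhfd}, an easy induction gives $g_n\cdot\iota(\Sigma_0)=\iota(\Sigma_n)$ for every interior index, together with the boundary clause of Definition \ref{deco} in the terminating case.

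Next I would check that this data genuinely satisfies Definition \ref{deco}. Each interior $\Sigma_n$ is compact maximal and free by Lemma \ref{fddsfd} and Lemma \ref{ghfhh}, and $\iota|_{\Sigma_n}$ is an embedding throughout the construction. For the intersection pattern I would fix a homeomorphism $\kappa\colon D\to S$ and set $a_n:=\kappa^{-1}(z_n)$; the fact that each new segment meets its predecessor in a single boundary point, together with the observation that a non-advancing step would force $[h_{n+1}]=[e]$ (exactly as $\alpha_1<\alpha_2$ was argued in the compact case), makes the $a_n$ strictly monotone, so consecutive parameter intervals meet only at shared endpoints and non-adjacent segments are disjoint. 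That the $\Sigma_n$ actually exhaust $S$, so that nothing is left uncovered, is Lemma and Remark \ref{dfxhbghg}.\ref{dfxhbghg1}.

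Finally, uniqueness in the stated sense is not a separate task: it is precisely the concluding assertion of Lemma \ref{dfggfgf}, whose formula \eqref{dfgfg} pins down both the class $[g_n]$ and the segment $\Sigma_n$ attached to each overlap, leaving only the global reindexing $n\mapsto -n$ of Remark \ref{dfxhbghg}.\ref{dfxhbghg2} as freedom. The hard part is therefore entirely in the existence half, and within it the genuinely delicate step is the termination dichotomy: I must rule out the positive chain from converging at an interior parameter $a=\lim_n a_n\in\innt[D]$ without ever reaching $\partial[S]$. This is where non-contractiveness enters, via Property ii) of $\wm$ exactly as in Lemma and Remark \ref{dfxhbghg}.\ref{dfxhbghg1}, since such an accumulation would shrink the translates $g_n\cdot\iota(\Sigma_0)$ into an arbitrarily small neighbourhood of $(\iota\cp\kappa)(a)$, which is forbidden. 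Everything else is bookkeeping built on Proposition \ref{prop:shifttrans}, case a), and \eqref{qwepokfdjkhfd}.
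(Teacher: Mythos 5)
Your proposal follows the paper's own route: iterate Proposition \ref{prop:shifttrans} together with case a) on both sides of $\Sigma_0$, set $g_n:=h_n\cdot{\dots}\cdot h_1$, verify Definition \ref{deco}, and read off exhaustion and uniqueness from Lemma and Remark \ref{dfxhbghg}.\ref{dfxhbghg1} and Lemma \ref{dfggfgf}, with non-contractiveness (Property ii)) ruling out accumulation of the chain at an interior parameter. The only differences are cosmetic --- you spell out the monotonicity bookkeeping for the intersection pattern, which the paper leaves implicit, and you should also stop the induction when a full translate $\Sigma_{n+1}=\ovl{\rho}(\Sigma_n)$ itself happens to be a boundary segment of $S$ --- so the argument is correct and essentially identical to the paper's.
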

\begin{proof}
Let $\mu\in \CI$ be maximal: 
\begingroup
\setlength{\leftmargini}{11pt}
\begin{itemize}
\item
If $\cn_\pmm=\scaleobj{0.8}{\pmm}\: \infty$ holds, then we set $\hcn_\pmm:=\cn_\pmm$.
\item 
If $\cn_\pmm\neq \scaleobj{0.8}{\pmm}\:\infty$ with $\homeo(a_{\cn_\pmm})\in \partial_S$ holds, then we set 
\begin{align*}
	\hcn_\pmm:= \cn_\pmm\: \scaleobj{0.8}{\mmp}\: 1\qquad\quad\text{as well as}\qquad\quad \Sigma_\pmm:= \Sigma_0.
\end{align*}
(\he{}Observe $|\cn_\pmm| \geq 2$, as $\homeo(a_{\pm 1})\in \innt[S]$ and $\homeo(a_{\cn_\pmm})\in \partial_S$ holds.\he)
\item
If $\cn_\pmm\neq \scaleobj{0.8}{\pmm}\:\infty$ with $\homeo(a_{\cn_\pmm})\in \innt[S]$ holds, then we 
set 
$$\hcn_\pmm:=\cn_\pmm\quad\:\: \text{and choose}\quad\:\: \Sigma_{\pmm}\quad\:\:  
\text{and}\quad\:\: g_{\cn_\pmm}\quad\:\:\text{as in}\quad\:\:  \eqref{nmcnmcxnmcxkjdskjdskjskjskjds989898dssddsdsdsds}.$$ 
\end{itemize}
\endgroup
\vspace{-6pt}

\noindent
Then,  
$(\{\Sigma_n\}_{n\in \hcn},\{[g_n]\}_{n\in \hcn})$ is a $\Sigma$-decomposition of $S$. 
\end{proof}

\subsection{$z$-Decompositions}
\label{lkjkjlfdjfdkjfkjfdlkfdj}
So far, we have discussed the situation where $S$ admits a compact maximal segment that is contained in the interior of $S$. According to Remark \ref{kjfdjlkfdjkfd}, this is always the case if  $S\cong \UE$ is free with $S\notin \Free(S)$.  
In this subsection, we show that  exactly one further situation can occur if  $S\cong \ID$ is free with $S\notin \Free(S)$: 
\begin{proposition}
\label{dhjfdhjfssasa}
Let $S\cong \ID$ be free with $S\notin \Free(S)$.  
Then, $S$ either admits a unique $z$-decomposition or a compact maximal  segment $\MFree(S)\ni \Sigma\subset \innt[S]$. 
\end{proposition}
\noindent
The notion of  a $z$-decomposition is defined as follows:
\begin{definition}
\label{fdgffgdaaa}
Assume that $S\cong \ID$ holds, and let $z\in \innt[S]$ be given. 
 A $z$-decomposition of $S$ is a class $[e]\neq  [g]\in \GES$ with $g\in G_{z}$, such that the following two conditions are fulfilled:
\begingroup
\setlength{\leftmargini}{17pt}
{
\renewcommand{\theenumi}{\small{\bf\alph{enumi})}} 
\renewcommand{\labelenumi}{\theenumi}
\begin{enumerate}
\item
\label{fdgffgdaaa1}
There exist compact segments $\KK_\pm\subseteq S$ with $g\cdot \iota(\KK_-)=\iota(\KK_+)$ and $\KK_-\cap\KK_+=\{z\}$. 
\item
\label{fdgffgdaaa2}
$\Sigma_\pm\in \Free(S)$ holds for the unique boundary segments $\Sigma_\pm\in \Seg(S)$  of $S$, with
\begin{align}
\label{sdllkdskdsllksdlkdssd9sdds9898dsdsdsdsds}
 S=\Sigma_-\cup \Sigma_+,\qquad \Sigma_-\cap\Sigma_+=\partial_{\Sigma_-}\cap\partial_{\Sigma_+}=\{z\},\qquad
  \KK_\pm\subseteq \Sigma_\pm.
\end{align}
\vspace{-20pt}
\end{enumerate}}
\endgroup
\end{definition}
\begin{convention}
\label{kjfdkjfdj}
Let $(S,\iota)$ be free with $S\notin \Free(S)$.
\vspace{-4pt} 
\begingroup
\setlength{\leftmargini}{12pt}
\begin{itemize}
\item
If we say that $S$ admits a $z$-decomposition, then we implicitly mean that $S\cong \ID$ holds. 
\item
If we say that $S$ admits no $z$-decomposition, then we explicitly include the case $S\cong \UE$. 

Note that Remark \ref{kjfdjlkfdjkfd} and Proposition \ref{dhjfdhjfssasa}  show that $S$ admits some compact maximal  $\MFree(S)\ni \Sigma\subset \innt[S]$ \defff it admits no $z$-decomposition (which is always the case if $S\cong\UE$ holds).
\item
By a decomposition of $S$, we understand 
\vspace{-4pt}
\begingroup
\setlength{\leftmarginii}{11pt}
\begin{itemize}
\item[$\cp$]
a $\Sigma$-decomposition if $S\cong\UE$ holds.
\item[$\cp$]
a $\Sigma$-decomposition or a $z$-decomposition if $S\cong\ID$ holds.
\end{itemize}
\endgroup
\end{itemize}
\endgroup
\end{convention}
\begin{lemma}
\label{dfdffdfddfdf} 
In the situation of Definition \ref{fdgffgdaaa},     
exactly one of the following two cases holds:
\begin{align}
\label{sit2}
(\iota|_{C_+})^{-1}\cp(g\cdot \iota|_{\Sigma_-})\colon\hspace{37.3pt} \Sigma_-&\rightarrow C_+\subseteq \Sigma_+\:\quad\text{is an analytic diffeomorphism}\\
\label{sit1}
(\iota|_{\Sigma_+})^{-1}\cp(g\cdot \iota|_{C_-})\colon \quad\Sigma_-\supset C_-&\rightarrow \Sigma_+\:\quad\hspace{27.2pt}\text{is an analytic diffeomorphism}
\end{align}
for boundary segments $C_\pmm\in \Free(S)$ of $\Sigma_\pmm$ with $z\in \partial_{C_\pmm}$ and $\KK_\pmm\subseteq C_\pmm$.
\end{lemma}
\begin{proof}
We set $\LL:=\KK_-\subseteq \Sigma_-$ and $\LL':=\KK_+\subseteq \Sigma_+$.  
Then, 
$\varrho:=\iota^{-1}\cp (g\cdot \iota|_{\KK_-})\colon \LL\rightarrow \LL'$
 is an analytic diffeomorphism by Corollary \ref{fddsfd}  (since $\KK_\pm$ is compact). We define
\begin{align*}
 C_-:=\dom[\ovl{\varrho}]\cap \Sigma_-\supseteq \KK_-\qquad\text{as well as}\qquad C_+:=\ovl{\varrho}(C_-)\supseteq \ovl{\varrho}(\KK_-)=\KK_+.
\end{align*}
\vspace{-20pt}
\begingroup
\setlength{\leftmargini}{14pt}
{
\renewcommand{\theenumi}{\small{\sf\roman{enumi})}} 
\renewcommand{\labelenumi}{\theenumi}
\begin{enumerate}
\item
\label{dsdsoidsoidsilkdslkdslkdslkdslk1}
Remark \ref{oidsoidsoidsoidsoidsoidsdsdsds454545remk}.\ref{nmdsnmdsnmdsnmh1} shows that $C_-:=\dom[\ovl{\varrho}]\cap \Sigma_-$ is a boundary segment of $\Sigma_-$. Moreover,  $z\in \partial_{C_-}$ is implied by $\partial_{\Sigma_-}\ni z\in \KK_-\subseteq C_-\subseteq \Sigma_-$:
\vspace{2pt}

{\it Proof.} If $z\in \innt[C_-]$ holds, then $C_-\subseteq \Sigma_-$ implies $z\in \innt[\Sigma_-]$, which contradicts $z\in \partial_{\Sigma_-}$.\hspace*{\fill}$\ddagger$
\item
\label{dsdsoidsoidsilkdslkdslkdslkdslk2}
$C_+$ is a boundary segment of $\Sigma_+$; specifically, 
$C_+\subseteq \Sigma_+$ holds with $z\in \partial_{C_+}$:
\vspace{2pt}

{\it Proof.} 
Remark \ref{oidsoidsoidsoidsoidsoidsdsdsds454545remk}.\ref{nmdsnmdsnmdsnmh1c} shows that $\Sigma_-\cap C_+=\partial_{\Sigma_-}\cap \partial_{C_+}$ is empty or singleton;  so that
$$
z\ni \Sigma_- \quad\wedge\quad z\in \KK_+=\ovl{\varrho}(\KK_-)\subseteq\ovl{\varrho}(C_-)=C_+
\qquad\quad\Longrightarrow\qquad\quad \Sigma_-\cap C_+=\partial_{\Sigma_-}\cap \partial_{C_+}=\{z\}.
$$
The claim is now immediate from the right side and the first two points in \eqref{sdllkdskdsllksdlkdssd9sdds9898dsdsdsdsds}.\hspace*{\fill}$\ddagger$
\end{enumerate}}
\endgroup
\noindent
Now, the following two cases can occur:
\begingroup
\setlength{\leftmargini}{12pt}
\begin{itemize}
\item
$C_-=\Sigma_-$:\:\:\quad   
$C_+$ is a boundary segment of $\Sigma_+$ by Point \ref{dsdsoidsoidsilkdslkdslkdslkdslk2}.
\item
$C_-\subset \Sigma_-$:
\vspace{-20pt}
\begingroup
\setlength{\leftmarginii}{71pt}
\begin{itemize}
\item
 $C_-$ is a boundary segment of $\Sigma_-$ by Point \ref{dsdsoidsoidsilkdslkdslkdslkdslk1}.
\item 
$C_+=\Sigma_+$ holds, because:
\vspace{-1pt}
\begingroup
\setlength{\leftmarginiii}{11.5pt}
\begin{itemize}
\item
$\Sigma_+$ is a boundary segment of $S\cong\ID$.
\item
$C_+$ is a boundary segment of $\Sigma_+$ with $z\in \partial_{C_+}\cap \partial_{\Sigma_+}$ by Point \ref{dsdsoidsoidsilkdslkdslkdslkdslk2}. 
\item 
$C_+$ is a boundary segment of $S$ by Remark \ref{oidsoidsoidsoidsoidsoidsdsdsds454545remk}.\ref{nmdsnmdsnmdsnmh1b}.
\qedhere
\end{itemize}
\endgroup
\end{itemize}
\endgroup
\end{itemize}
\endgroup
\end{proof}

\begin{lemma}
\label{gfhhgh}
Assume that $S\cong \ID$ holds,      
and let $z\in \innt[S]$ be given with $z$-decomposition as in Definition \ref{fdgffgdaaa}.  
Then, the following assertions hold:
\begingroup
\setlength{\leftmargini}{15pt}
{
\renewcommand{\theenumi}{{\arabic{enumi})}} 
\renewcommand{\labelenumi}{\theenumi}
\begin{enumerate}
\item
\label{gfhhgh1}
We have $\MFree(S)=\{\Sigma_-,\Sigma_+\}$.
\item
\label{gfhhgh2}
We have the implication
\vspace{-2pt}
\begin{align}
\label{ddhjdkjd}
g'\cdot \iota|_{\Sigma_-}\cpsim\iota\quad\text{for}\quad g'\in G\qquad\quad\Longrightarrow\qquad\quad [g']\in \{[e],[g]\}.\hspace{50pt}
\end{align}
\vspace{-17pt}
\item
\label{fddffddf1}
We have $[g^{-1}]=[g]$. 
\item
\label{fddffddf}
There exists no further $z$-decomposition of $S$: 

If $[g']$ is a $z'$-decomposition of $S$ (\he$[e]\neq [g']\in \GES$ and $z'\in \innt[S]$\he), then we have $z'=z$ and $[g']=[g]$. 
\end{enumerate}}
\endgroup
\end{lemma}
\begin{proof}
\begingroup
\setlength{\leftmargini}{15pt}
{
\renewcommand{\theenumi}{{\arabic{enumi})}} 
\renewcommand{\labelenumi}{\theenumi}
\begin{enumerate}
\item
Since $\Sigma_\pm\in \Free(S)$ holds by assumption, 
it suffices to show that for each $C\in \Free(S)$, we either have $C\subseteq \Sigma_-$ or $C\subseteq \Sigma_+$. 
Let thus $C\in \Free(S)$ be given: 
\vspace{-2pt}
\begingroup
\setlength{\leftmarginii}{11pt}
\begin{itemize}
\item
We have $z\notin \innt[C]$, because   
Point \ref{fdgffgdaaa1} in Definition \ref{fdgffgdaaa} provides the implication
\begin{align}
\label{kjfdkjfdkjf}
	z\in \innt[C]\qquad\quad\stackrel{\ref{fdgffgdaaa1}}{\Longrightarrow}\qquad\quad g\cdot \iota|_C\cpsim \iota|_C \qquad\:\:\stackrel{C\:\in\: \Free(S)}{\Longrightarrow}\qquad\:\:  [g]=[e]
\end{align}
whereby the right contradicts $[g]\neq e$. 
\item
Since $C$ is connected, the previous point together with the first two points in \eqref{sdllkdskdsllksdlkdssd9sdds9898dsdsdsdsds} implies  
that either $C\subseteq \Sigma_-$ or $C\subseteq \Sigma_+$ holds. 
\end{itemize}
\endgroup
\item
The left side of \eqref{ddhjdkjd} together with \eqref{sdllkdskdsllksdlkdssd9sdds9898dsdsdsdsds} implies 
\begin{align}
\label{lsdldlksklkldskldssd98sd9s8d989ds89sdsdssxccxcxcxd}
g'\cdot \iota|_{\Sigma_-}\cpsim\iota|_{\Sigma_-}\qquad \vee\qquad\:\: g'\cdot \iota|_{\Sigma_-}\cpsim\iota|_{\Sigma_+}.
\end{align}
\begingroup
\setlength{\leftmarginii}{12pt}
\begin{itemize}
\item
The left side of \eqref{lsdldlksklkldskldssd98sd9s8d989ds89sdsdssxccxcxcxd} together with $\Sigma_-\in \Free(S)$ implies $[g']=[e]$.
\item
Assume that the right side of \eqref{lsdldlksklkldskldssd98sd9s8d989ds89sdsdssxccxcxcxd}  holds. By Lemma \ref{dfdffdfddfdf}, either \eqref{sit2} or \eqref{sit1} holds, whereby 
\begin{align*}
\eqref{sit2}\qquad\Longrightarrow\quad\quad\: g'^{-1}\cdot \iota|_{\Sigma_+}\cpsim g^{-1}\cdot \iota|_{\Sigma_+}	   \qquad\: &\stackrel{\eqref{qwepokfdjkhfd}}{\Longrightarrow}\qquad\:\: [g']=[g]\\[-3pt]
\eqref{sit1}\qquad\Longrightarrow\quad\qquad\: g'\cdot \iota|_{\Sigma_-}\cpsim \phantom{^{-1}}g\cdot \iota|_{\Sigma_-}\qquad\hspace{1.6pt} &\Longrightarrow\qquad\:\: [g']=[g] 
\end{align*}
\end{itemize}
\endgroup 
as $\Sigma_\pm\in \Free(S)$ holds. 
\item 
It suffices to show that 
$g^{-1}\cdot \iota|_{\Sigma_-}\cpsim \iota$ holds, because  then \eqref{ddhjdkjd} yields 
$$
[g^{-1}]\in\{[e],[g]\}\qquad\quad\stackrel{\eqref{qwepokfdjkhfd}}{\Longrightarrow}\qquad\quad [g^{-1}]=[g].
$$ 
The argumentation is analogous to that in Lemma \ref{dspodsopodssdklkdss98ds98ds98ds}:

We fix a chart $(U,\psi)$ around $z\in \innt[S]$ with $U\subseteq \KK_-\cup \KK_+$. Lemma \ref{dfdffdfddfdf} and Lemma \ref{lemma:BasicAnalytt1}.\ref{lemma:BasicAnalytt123} provide  compact connected neighbourhoods $\KK,\KK'\subseteq U\subseteq \KK_-\cup \KK_+$ of $z$ with $g\cdot \iota(\KK)=\iota(\KK')$. Then, we have
\begin{align*}
g\in G_z,\qquad g\cdot \iota(\KK_-)=\iota(\KK_+),\qquad
	\KK\cap \KK_+=\{z\}\:\dot\cup\: (\KK\setm \KK_-),\qquad\KK'\cap \KK_-=\{z\}\:\dot\cup\: (\KK'\setm \KK_+).
\end{align*}
Together with injectivity of $\iota$, $g\cdot \iota$, 
this yields
\begin{align*}
	\hspace{30pt} g\cdot \iota(\underbrace{\KK\cap \KK_+}_{\displaystyle=:\tilde{\KK}_+})&= \{g\cdot \iota(z)\}\:\dot\cup \:(g\cdot \iota(\KK\setm \KK_-))\\[-19pt]
	&=\{\iota(z)\}\:\dot\cup \: ((g\cdot \iota(\KK))\setm (g\cdot \iota(\KK_-)))\\
	&= \{\iota(z)\}\:\dot\cup \: (\iota(\KK')\setm \iota(\KK_+))\\
	&=\iota(\underbrace{\KK'\cap\KK_-}_{\displaystyle =: \tilde{\KK}_-})
\end{align*}
\vspace{-13pt}

\noindent
with $\Sigma_\pm\supseteq \tilde{\KK}_\pm \in \Seg(S)$ compact, hence $g^{-1}\cdot \iota|_{\Sigma_-}\cpsim \iota$.
\item
Let $\Sigma'_\pm\in \MFree(S)$ denote the boundary segments that correspond to the $z'$-decomposition $[g']$. Then, Part \ref{gfhhgh1} shows that either $\Sigma'_\pm=\Sigma_\pm$ or $\Sigma'_\pm=\Sigma_\mp$ holds, hence (in both cases)
\vspace{-4pt}
$$
\{z\}\stackrel{\eqref{sdllkdskdsllksdlkdssd9sdds9898dsdsdsdsds}}{=}\Sigma_+\cap\Sigma_-=\Sigma'_+\cap\Sigma'_-\stackrel{\eqref{sdllkdskdsllksdlkdssd9sdds9898dsdsdsdsds}}{=}\{z'\}
\qquad\quad\Longrightarrow\qquad\quad z=z'.
$$
\vspace{-22pt}
\begingroup
\setlength{\leftmarginii}{11pt}
\begin{itemize}
\item
Assume $\Sigma'_\pm=\Sigma_\pm$:\: 
The conditions  \ref{fdgffgdaaa1} and \ref{fdgffgdaaa2} (for the $z'$-decomposition $[g']$) yield
\hspace*{\fill}($[g']\neq [e]$)
\vspace{-4pt}
	 \begin{align*}
g'\cdot \iota|_{\Sigma_-'}\cpsim\iota|_{\Sigma_+}
	\quad\:\:\Longrightarrow\quad\:\:
g'\cdot \iota|_{\Sigma_-}\cpsim\iota
	\quad\:\:\stackrel{\eqref{ddhjdkjd}\text{ for }[g]}{\Longrightarrow}\quad\:\: [g']\in \{[e],[g]\}
	\quad\:\:\Longrightarrow\qquad [g']=[g].
\end{align*}  
\item
Assume $\Sigma'_\pm=\Sigma_\mp$:\: The conditions  \ref{fdgffgdaaa1} and \ref{fdgffgdaaa2} (for the $z$-decomposition $[g]$) yield\hspace*{\fill}($[g]\neq [e]$)
\vspace{-4pt}
	 \begin{align*}
g\cdot \iota|_{\Sigma_-}\cpsim\iota|_{\Sigma_+}
	\quad\:\Longrightarrow\quad\:
g^{-1}\cdot \iota|_{\Sigma_-'}\cpsim\iota
	\quad\stackrel{\eqref{ddhjdkjd}\text{ for }[g']}{\Longrightarrow}\quad [g]\stackrel{\text{Part }\ref{fddffddf1}}{=}[g^{-1}]\in \{[e],[g']\}
	\quad\:\Longrightarrow\quad\: [g]=[g'].
\end{align*}  
\end{itemize}
\endgroup
\noindent
This proves the claim.\qedhere
\end{enumerate}}
\endgroup
\end{proof}
\begin{corollary}
\label{fggfd}
Assume that $S\cong \ID$ holds.
\begingroup
\setlength{\leftmargini}{15pt}
\begin{enumerate}
\item
\label{fggfd1}
If $S$ admits a $z$-decomposition for some $z\in \innt[S]$, then $S$ admits no further decomposition (\he in particular, there exists no compact $\MFree(S)\ni \Sigma\subset \innt[S]$\he). 
\item
\label{fggfd2}
If $S$ admits some compact $\MFree(S)\ni \Sigma\subset \innt[S]$ (\he hence, a $\Sigma$-decomposition by Lemma \ref{edffds}\he), then $S$ cannot admit a $z$-decomposition for some $z\in \innt[S]$.
\end{enumerate}
\endgroup
\end{corollary}
\begin{proof}
\begingroup
\setlength{\leftmargini}{15pt}
\begin{enumerate}
\item
Assume that $S$ admits a $z$-decomposition with $z\in \innt[S]$. By Lemma \ref{gfhhgh}.\ref{gfhhgh1}, $S$ cannot admit some  compact $\MFree(S)\ni \Sigma\subset \innt[S]$ (hence, no $\Sigma$-decomposition). The rest is clear from Lemma \ref{gfhhgh}.\ref{fddffddf}. 
\item
Clear from Part \ref{fggfd1}.\qedhere
\end{enumerate}
\endgroup
\end{proof}
\noindent
We are ready for the proof of Proposition \ref{dhjfdhjfssasa}.
\begin{proof}[Proof of Proposition \ref{dhjfdhjfssasa}] 
By Corollary \ref{fggfd}, it remains to show that $S$ admits a $z$-decomposition for some $z\in \innt[S]$ if $S$ admits no  compact maximal segment properly contained in $\innt[S]$. 
Assume thus that $S$ admits no compact $\MFree(S)\ni \Sigma\subset \innt[S]$, and let $\homeo\colon \ID\rightarrow S$ be a fixed homeomorphism: 
\begingroup
\setlength{\leftmargini}{17pt}
{
\renewcommand{\theenumi}{{\Alph{enumi})}} 
\renewcommand{\labelenumi}{\theenumi}
\begin{enumerate}
\item
\label{step0}
Since $S\notin \Free(S)$ is free, Lemma \ref{fhdhh} provides some $\MFree(S)\ni \Sigma_-\subset S$.
\item 
\label{step1} 
$\Sigma_-$ is a (proper) boundary segment of $S$.
\vspace{1pt}

{\it Proof.} 
 If $\Sigma_-$ is not a boundary segment of $S$, then there exist $\ID\ni a<b\in \ID$ with $D_-:=\homeo^{-1}(\Sigma_-)\subseteq [a,b]$. Lemma \ref{kjkjdskjkjdsaassaa} shows that $\Sigma_-$ is closed in $S$; hence, $D_-$ is closed in $[a,b]$, thus compact. This shows that $\Sigma_-\in \MFree(S)$ is compact, which contradicts the assumptions. 
\hspace*{\fill}$\ddagger$
\item
\label{step2}
Since $D_-:=\homeo^{-1}(\Sigma_-)$ is closed in $\ID$ ($\Sigma_-$ is closed in $S$  by  
Lemma \ref{kjkjdskjkjdsaassaa}), Point \ref{step1} shows that
$$
\text{either}\qquad D_- = \ID\cap (-\infty,\tau]\qquad\text{or}\qquad D_- = \ID\cap [\tau,\infty)\qquad\text{holds  for some}\qquad \tau\in \innnt[\ID]. 
$$
Hence, modifying $\homeo$ if necessary, we can assume that $D_-=\ID\cap (-\infty,0]$ holds, and set 
$$
z:= \homeo(0)\in\Sigma_- \cap\innt[S]\qquad\text{as well as}\qquad \Seg(S)\ni \Sigma_+:=\homeo(D_+)\qquad\text{for}\qquad D_+:=\ID\cap [0,\infty).
$$
Then, $\Sigma_+$ is a proper boundary segment of $S$, such that the first two identities in  \eqref{sdllkdskdsllksdlkdssd9sdds9898dsdsdsdsds} hold.
\item 
\label{step3}
We have $\Sigma_+\in \Free(S)$.
\vspace{1pt}

{\it Proof.} 
Lemma \ref{dsdssddsds} applied to $\Sigma_-$ and $z$, yields $0<\tau\in \ID$ with 
$$
[0,\tau]\subseteq D_+\qquad\quad\text{and}\qquad\quad\Free(S)\ni\Sigma_z:= \homeo([0,\tau])\subseteq \Sigma_+.
$$ 
Then, Lemma \ref{fhdhh} provides some $\Sigma'\in \MFree(S)$ with $\Sigma_z\subseteq \Sigma'$. The same arguments as in \ref{step1} show that $\Sigma'$ is 
a boundary segment of $S$, so that 
\begin{align}
\label{sdoiidsoidskldskldslksdlksdlkdslksdiudsiuds8798ds89dssd}
[0,\tau]\subseteq D':=\homeo^{-1}(\Sigma')\qquad\quad\Longrightarrow\qquad 
\quad D_+\subseteq D'\qquad\vee\qquad D_-\subseteq D'.
\end{align} 
\begingroup
\setlength{\leftmarginii}{11pt}
\begin{itemize}
\item
If $D_+\subseteq D'$ holds, then we have $\Sigma_+\subseteq \Sigma'\in \Free(S)$, hence $\Sigma_+\in \Free(S)$ by Lemma \ref{jdkjfdkjfd}.
\item
If $D_-\subseteq D'$ holds, then the left side of \eqref{sdoiidsoidskldskldslksdlksdlkdslksdiudsiuds8798ds89dssd}  yields
\begin{align*}
D_-\subset D_-\cup [0,\tau]\subseteq D'\qquad\quad&\Longrightarrow\qquad\quad \Sigma_-\subset \Sigma'_-:=\Sigma_-\cup \Sigma_z\subseteq \Sigma'\in \Free(S)\\
&\hspace{-9.5pt}\stackrel{{\rm Lemma}\: \ref{jdkjfdkjfd}}{\Longrightarrow}\qquad\hspace{0.3pt} \Sigma_-\subset \Sigma_-'\in \Free(S),
\end{align*} 
\vspace{-23pt}

\noindent
which contradicts maximality of $\Sigma_-$.
\hspace*{\fill}$\ddagger$
\end{itemize}
\endgroup
\item
\label{step4}
To prove the claim, it suffices to show that there exists some compact $\Seg(S)\ni S'\subset\innt[S]$ with $z\in \innt[S']$ such that $\Sigma:=  S'\cap \Sigma_-\in \MFree(S')$ holds.\footnote{Hence, $\Sigma$ is maximal w.r.t.\  $(S',\iota|_{S'})$, thus compact as closed in $S'$ by Lemma \ref{kjkjdskjkjdsaassaa}.}
\vspace{1pt}

{\it Proof.}
We have 
 $S'=\homeo([a,b])$ for certain $\inf(D)<a<0<b<\sup(D)$,   because $S'$ is not a boundary segment of $S$, hence 
 $$
 	\Sigma=\homeo([a,0]) \qquad\quad\text{and}\qquad\quad \partial_{\Sigma}\cap \innt[S']=\{z\}.
 $$
Now, $\Sigma\subset S'$ is either positive or negative w.r.t.\ $(S',\iota|_{S'})$ (see Definition \ref{dslkdslkdskldefkjdsodsoidsiidsodsdsdssd} and Remark \ref{dsdsdsdskjdsjkdskjdskjdsds09w0909ew09we09ewewewewewewewcx}):
\begingroup
\setlength{\leftmarginii}{11pt}
\begin{itemize}
\item
If $\Sigma$ is positive w.r.t.\ $(S',\iota_{S'})$, then Lemma \ref{qasggpapd} shows $\Sigma\in \MFree(S)$. This, however,  contradicts $\Sigma\subset \Sigma_-$, as $\Sigma_-\in \MFree(S)$ holds by Lemma \ref{gfhhgh}.\ref{gfhhgh1}.
\item
Hence, $\Sigma$ is negative w.r.t.\ $(S',\iota_{S'})$. According to Definition \ref{dslkdslkdskldefkjdsodsoidsiidsodsdsdssd}, there exists 
$$
\GES\stackrel{\eqref{jdkjfdkjfddf}}{=}\GESS\ni[g]\neq [e]\qquad \text{with} \qquad g\in G_z
$$ 
as well as compact segments $\KK_\pm\subseteq \Sigma_\pm\subseteq S$ with $\KK_-\cap\KK_+=\{z\}$ and $g\cdot \iota(\KK_-)=\iota(\KK_+)$.\footnote{In the context of Definition \ref{dslkdslkdskldefkjdsodsoidsiidsodsdsdssd}, set $\KK_-:=\Sigma_b$ and $\KK_+:=\Sigma_z$.}  
\hspace*{\fill}$\ddagger$ 
\end{itemize}
\endgroup
\end{enumerate}}
\endgroup
\noindent
Assume now that a compact segment $S'$ as described in  \ref{step4} does not exist: 
\begingroup
\setlength{\leftmargini}{11pt}
\begin{itemize}
\item
We fix  $t\in \ID$ and a sequence $\{a_n\}_{n\in \NN}\subseteq \ID$, with
\begin{align*}
	\textstyle\lim_n a_n=\inf(\ID) \qquad\quad\text{as well as}\qquad\quad  \inf(\ID)<a_{n+1}<a_n<0<t<\sup(\ID)\qquad\forall\: n\in \NN.
\end{align*} 
\item
For each $n\in \NN$, we define 
$$\Seg(S)\ni S'_n:=\homeo([a_n,t])\subset \innt[S]\qquad\quad\text{as well as}\qquad\quad \Sigma_n:=S_n'\cap \Sigma_-=\homeo([a_n,0]).
$$
Then, $\Sigma_n\notin \MFree(S')$ holds by assumption (as $z\in \innt[S_n']$), and $\Sigma_n\in \Free(S')$ holds by Lemma \ref{jdkjfdkjfd} and \eqref{jdkjfdkjfddf}. 
\vspace{1pt}

\noindent
{\it Proof of the second Claim.}
Since $\Sigma_n\subseteq \Sigma_-\in \Free(S)$ holds, Lemma \ref{jdkjfdkjfd} yields $\Sigma_n\in \Free(S)$. Then, $\Sigma_n\in \Free(S')$ holds by \eqref{jdkjfdkjfddf} and $\Sigma_n\subseteq S_n'$. 
\hspace*{\fill}$\ddagger$

By Lemma \ref{fhdhh} and Lemma \ref{kjkjdskjkjdsaassaa}, there thus exists some $0< t_n\leq t$ such that $\homeo([a_n,t_n])\in \MFree(S'_n)$ holds.
\end{itemize}
\endgroup
\noindent
The following two situations can occur:
\begingroup
\setlength{\leftmargini}{11pt}
\begin{itemize}
\item
Assume that $t_n=t$ holds for all $n\in \NN$. Then, the same arguments that we have used in the  proof of Lemma \ref{fhdhh} show that \hspace*{\fill}($S_n'\in \Free(S)$ for all $n\in \NN$)
\vspace{-1pt}
\begin{align*}
	\textstyle\Free(S)\stackrel{\eqref{jdkjfdkjfddf}}{\supseteq}\Free(S')\ni \bigcup_{n\in \NN} S_n'=\bigcup_{n\in \NN} \homeo([a_n,t])=\homeo(\ID\cap (-\infty,t])\supset \homeo(\ID\cap (-\infty,0]) =\Sigma_-
\end{align*}
holds; which contradicts  
$\Sigma_-\in \MFree(S)$. 
\item
Assume that $t_n<t$ holds for some $n\in \NN$, hence $S_n'\supset \Sigma:=\homeo([a_n,t_n])\in \MFree(S_n')$.
\vspace{-2pt}
\begingroup
\setlength{\leftmarginii}{11pt}
\begin{itemize} 
\item
$\Sigma$ is negative w.r.t.\ $(S_n',\iota|_{S_n'})$.
\vspace{1pt}

{\it Proof.}
If $\Sigma$ is not negative, then $\Sigma$ is positive (Remark \ref{dsdsdsdskjdsjkdskjdskjdsds09w0909ew09we09ewewewewewewewcx}). 
Hence,  
Lemma \ref{qasggpapd} yields $\Sigma\in \MFree(S)$, which contradicts that $S$ admits no compact maximal segment that is properly contained in $\innt[S]$. 
\hspace*{\fill}$\ddagger$
\item
Since $\Sigma$ is negative with boundary point $z:=\homeo(t_n)\in \innt[S']$ (observe $t_n<t$ by assumption), there exist  
 $\GES=\GESS\ni [g]\neq [e]$ and $\Sigma_b,\Sigma_z\in \Seg(S')$ compact as in  \ref{starlkdslkdslkdslksdlkdslklkdsdsdsds} (in Proposition \ref{prop:shifttrans} applied to $(S',\iota|_{S'}),\Sigma,z$) with $b=z$ and $g\in G_z$. Then, $\Sigma_b=\homeo([x,t_n])$ and $\Sigma_z=\homeo([t_n,y])$ holds for certain $a_n\leq x<t_n<y\leq t$, so that
\begin{align*}
	g\cdot\iota(\Sigma_b)=\iota(\Sigma_z)\qquad\Longrightarrow\qquad g\cdot \iota|_{\homeo([0,t_n])}\cpsim \iota|_{\homeo([t_n,t])}\qquad\Longrightarrow\qquad 
	g\cdot \iota|_{\Sigma_+}\cpsim \iota|_{\Sigma_+}.
\end{align*}
Since $\Sigma_+\in \Free(S)$ holds, the right side implies $[g]=[e]$;     
which contradicts the choices.
\qedhere
\end{itemize}
\endgroup
\end{itemize}
\endgroup
\end{proof}

\subsection{Decomposition Types -- The Classification}
\label{jhdfdfdfkfjdjkdf}
\begin{definition}
\label{dpodspopodslkewlkewewnmnmewnmewwe98ew98ew98ew}
Let $(S,\iota)$ be free with $S\notin \Free(S)$, and assume that $S$ admits no $z$-decomposition. In this case, we say that $S$ is  positive\slash negative \defff each compact $\Sigma\in \MFree(S)$ is 
positive\slash negative.
\end{definition}
\noindent
In this section, we prove the following theorem:\hspace*{\fill}(recall Convention \ref{kjfdkjfdj})
\begin{theorem}
\label{dfsofgofg}
	Assume that $\wm$ is non-contractive; and let $(S,\iota)$ be free with $S\notin\Free(S)$. Then, $S$ either admits a unique $z$-decomposition or some compact $\MFree(S)\ni \Sigma\subset \innt[S]$. Moreover, the following assertions hold:
\begingroup
\setlength{\leftmargini}{15pt}
{
\renewcommand{\theenumi}{{\sf\arabic{enumi})}} 
\renewcommand{\labelenumi}{\theenumi}
\begin{enumerate}
\item
\label{dfsofgofg1}
	If $S\cong \UE$ holds, then 
	each $\Sigma\in \MFree(S)\neq \emptyset$ is compact with $\Sigma\subset \innt[S]$ (no $z$-decomposition possible). 
\item
\label{dfsofgofg2}
If $S$ admits some compact $\MFree(S)\ni \Sigma\subset \innt[S]$,  
then  $S$ is either positive or negative. Moreover, $S$  admits a $\Sigma'$-decomposition for each such compact $\MFree(S)\ni\Sigma'\subset \innt[S]$ that is unique up to change of orientation as described in Lemma \ref{asdhlkdsajkd} ($S\cong\UE$) and Lemma \ref{dfggfgf} ($S\cong\ID$).
\item
\label{dfsofgofg3}   
If $S$ is negative, then $\MFree(S)$ consist of the segments that occur in a fixed $\Sigma$-decomposition of $S$ (for $\MFree(S)\ni \Sigma \subset \innt[S]$ compact). In particular,   up to change of orientation,  the possible decompositions of $S$ are characterized by Remark \ref{sdffdsfsd}.\ref{sdffdsfsd2} ($S\cong\UE$) and Remark \ref{dfxhbghg}.\ref{dfxhbghg3} ($S\cong \ID$).  
\end{enumerate}}
\endgroup
\end{theorem} 
\noindent
In Sect.\ \ref{asasaghhgfg}, we will investigate the positive and the negative case in more detail. In particular, we will   provide explicit formulas for the classes $[g_n]$ that occur in a  given $\Sigma$-decomposition of $S$. Moreover, we will show that in the positive case, the elements in $\MFree(S)$ are continuously distributed in $S$.
\vspace{6pt}

\noindent
Note that the first statement in Theorem \ref{dfsofgofg} as well as Part \ref{dfsofgofg1} are covered by Proposition \ref{dhjfdhjfssasa} and Remark \ref{kjfdjlkfdjkfd}. The parts  \ref{dfsofgofg2} and \ref{dfsofgofg3} are now established step by step:
\begin{lemma}
\label{dsoioidslkdskldskldskldsds98ds09ds9ds9898ds}
Let $\MFree(S)\ni \Sigma,\Sigma'\subset S$ be compact with $z\in \partial_{\Sigma}\cap \partial_{\Sigma'}=\Sigma\cap\Sigma'$. Then, 
$$
\Sigma\quad\text{positive/negative}\qquad\Longleftrightarrow\qquad \Sigma'\quad\text{positive/negative}.
$$ 
\end{lemma}
\begin{proof} 
Since a compact maximal segment is either positive or negative (Remark \ref{dsdsdsdskjdsjkdskjdskjdsds09w0909ew09we09ewewewewewewewcx}), it suffices to show  
that $\Sigma$ is negative\:\:\defff\:$\Sigma'$ is negative. We only prove that $\Sigma'$ is negative if $\Sigma$ is negative, because the converse implication follows analogously. Assume thus that $\Sigma$ is negative: 
\begingroup
\setlength{\leftmargini}{10pt}
\begin{itemize}
\item
Combining Remark \ref{cnmnmcviufeiureiure}.\ref{cnmnmcviufeiureiure6cxcxcxcx} (for $x\equiv z$) with the assumption $z\in \partial_{\Sigma}\cap \partial_{\Sigma'}=\Sigma\cap\Sigma'$, we see that there exists a chart $(U,\psi)$ around $z$ with $\psi(z)=0$ and $\psi(U)=(i',i)$ such that
\begin{align*}
\begin{split}
&\psi(\Sigma\cap U)=(i',0]\qquad\quad\text{and}\qquad\quad \psi(\Sigma'\cap U)=[0,i)\\
&\hspace{103pt}\text{hence}\\
&\hspace{54pt}\partial_{\Sigma}\cap\innt[S]\ni z\in \partial_{\Sigma'}\cap\innt[S].
\end{split}
\end{align*}
\item
We choose   
$[g], \Sigma_b, \Sigma_z$ as in  \ref{starlkdslkdslkdslksdlkdslklkdsdsdsds} in Proposition \ref{prop:shifttrans}. 
Since $\Sigma$ is negative, we have $b=z$ and $g,g^{-1}\in G_z$.  
Shrinking $\Sigma_b,\Sigma_z$ around $b=z$ if necessary, we can achieve that $\Sigma_b\subseteq \Sigma\cap U$ and $\Sigma_z\subseteq \Sigma'\cap U$ holds, hence
\begin{align*}
\psi(\Sigma_b)=[j',0]\qquad\text{and}\qquad \psi(\Sigma_z)=[0,j] \qquad\text{with}\qquad i'<j'<0<j<i.  
\end{align*} 
\item
We set $g':=g^{-1}$ as well as $\Sigma'_{z}:=\Sigma_b$ and $\Sigma'_b:=\Sigma_z$. Then,  \ref{starlkdslkdslkdslksdlkdslklkdsdsdsds} together with the previous two points   
yields
\begin{align*}
g'\cdot \iota(\Sigma'_b)=\iota(\Sigma'_z),\quad\:\:\: g'\cdot \iota(b)=\iota(z),\quad\:\:\: \Sigma'_b\subseteq \Sigma'\quad\text{with}\quad  b\in \partial_{\Sigma'}\cap \partial_{\Sigma'_b},\quad\:\:\: \{z\}=\partial_{\Sigma'}\cap \partial_{\Sigma'_z}=\Sigma'\cap \Sigma'_z.  
\end{align*}
\end{itemize}
\endgroup
\noindent
Consequently,  \ref{starlkdslkdslkdslksdlkdslklkdsdsdsds} holds for $\Sigma', [g'],\Sigma'_b,\Sigma'_z$ with $g'\in G_z$, so that $\Sigma'$ is negative. \qedhere
\end{proof}

\begin{corollary}
\label{kjdsjkdsjkds}
Let $\MFree(S)\ni \Sigma\subset \innt[S]$ be compact. If $\Sigma$ is positive\slash negative, then each compact maximal segment that occurs in a $\Sigma$-decomposition of $S$ is positive\slash negative. 
\end{corollary}
\begin{proof}
The claim just follows inductively from Lemma \ref{dsoioidslkdskldskldskldsds98ds09ds9ds9898ds} as well as 
\begingroup
\setlength{\leftmargini}{10pt}
\begin{itemize}
\item
Remark\hspace{5pt} \ref{sdffdsfsd}.\ref{sdffdsfsd199} if $S\cong\UE$ holds.
\item
Remark \ref{dfxhbghg}.\ref{dfxhbghgFree} and Remark \ref{dfxhbghg}.\ref{dfxhbghg998ds98ds} if $S\cong \ID$ holds.\qedhere
\end{itemize}
\endgroup
\end{proof}
\begin{lemma}
\label{jkdsklsdlkdsjkl}
Let $(S,\iota)$ be free with $S\notin\Free(S)$, and   
assume that $S$ admits no $z$-decomposition for $z\in \innt[S]$. If $\MFree(S)\ni \Sigma\subset S$ is negative, then there exists some negative $\MFree(S)\ni\Sigma'\subset\innt[S]$. 
\end{lemma}
\begin{proof}
The claim is clear if $\Sigma\subset \innt[S]$ holds, e.g., if $S\cong \UE$ (by Remark \ref{kjfdjlkfdjkfd}). Assume thus that $S\cong\ID$ holds, and that $\Sigma$ is a boundary segment of $S$: 
\begingroup
\setlength{\leftmargini}{11pt}
\begin{itemize}
\item
We write $\partial_\Sigma=\{w,z\}$ with $w\in \partial_S$ and  $z\in \innt[S]$, and fix a homeomorphism $\homeo\colon \ID\rightarrow S$ with 
$$
\homeo^{-1}(w)=:i'=\inf(\ID)\in \ID,\qquad \homeo^{-1}(z)=0,\qquad \homeo^{-1}(\Sigma)=[i',0].
$$
\vspace{-17pt}
\item
Let $[g],\Sigma_b,\Sigma_z$ be as in \ref{starlkdslkdslkdslksdlkdslklkdsdsdsds} in Proposition \ref{prop:shifttrans}; in particular,
\begin{align*}
 \homeo^{-1}(\Sigma_z)=[0,i]\quad\text{for some}\quad 0<i\in \ID,\quad\text{with}\quad b=z\quad\text{and}\quad g\in G_z\quad\text{as}\quad\Sigma\quad\text{is negative.}
\end{align*}
\vspace{-17pt}
\item 
Lemma \ref{fhdhh} provides some $\Sigma_z\subseteq \Sigma'\in \MFree(S)$, and we set $D:=\homeo^{-1}(\Sigma')$: 
\begingroup
\setlength{\leftmarginii}{11pt}
\begin{itemize}
\item
We have $0=\inf(D)\in D$:
\vspace{3pt}

{\it Proof.} We have $\inf(D)\leq 0$ as $z\in \Sigma_z\subseteq \Sigma'$ holds (hence $0\in D$). Then, $\inf(D)<0$ together with  
$g\in G_z$ implies $g\cdot \iota|_{\Sigma'}\cpsim \iota|_{\Sigma'}$,  which contradicts $[g]\neq [e]$ and $\Sigma'\in \Free(S)$.   
\hspace*{\fill}$\ddagger$
\vspace{2pt}
\item
We have $D=[0,a]$, for $a:=\sup(D)\in \innnt[\ID]$. 
\vspace{3pt}

{\it Proof.} $\Sigma'$ is closed in $S$ by Lemma \ref{kjkjdskjkjdsaassaa}; hence, $D$ is closed in $\ID$.  
It thus suffices to show $\sup(D)<\sup(\ID)$.  
Assume thus that $\sup(D)\geq \sup(\ID)$ holds; hence, $\sup(D)=\sup(\ID)$ by $D\subseteq \ID$. Then, 
since $D$ is closed in $\ID$,  we have $D=\ID\cap [0,\infty)$; hence, $[g]$ is a $z$-decomposition of $S$ (in Definition  \ref{fdgffgdaaa}, choose $\Sigma_-\equiv \Sigma$, $\Sigma_+:=\Sigma'$, $\KK_-:=\Sigma_b$, $\KK_+:=\Sigma_z$), which contradicts the assumptions.
\hspace*{\fill}$\ddagger$
\end{itemize}
\endgroup
\end{itemize}
\endgroup
\noindent
The first point together with the third point shows 
$$
\{z\}= \partial_{\Sigma}\cap \partial_{\Sigma'}=\Sigma\cap\Sigma'\qquad\text{with}\qquad \MFree(S)\ni \Sigma'\subset \innt[S]\qquad\text{hence}\qquad \Sigma'\subset S,
$$ 
so that Lemma \ref{dsoioidslkdskldskldskldsds98ds09ds9ds9898ds} shows that $\Sigma'$ is negative.  
\qedhere 
\end{proof}

\begin{lemma}
\label{fhjfdsfdd}
Let $\MFree(S)\ni\Sigma\subset \innt[S]$ be negative.  Then, the following assertions hold:
\begingroup
\setlength{\leftmargini}{15pt}
\begin{enumerate}
\item
\label{fhjfdsfdd1}
 $\MFree(S)$  consists of the segments that occur in a fixed $\Sigma$-decomposition of $S$.
\item
\label{fhjfdsfdd2}
Each compact $\Sigma'\in \MFree(S)$ is negative.
\item
\label{fhjfdsfdd3}
Up to change of orientation as described in 
\begingroup
\setlength{\leftmarginii}{12pt}
\begin{itemize}
\item[$\bullet$]
Lemma \ref{asdhlkdsajkd}\hspace{4pt} for\: $S\cong \UE$,
\item[$\bullet$]
Lemma \ref{dfggfgf}\hspace{4pt} for\: $S\cong \ID$,
\end{itemize}
\endgroup
\noindent
the only further decompositions of $S$ are provided by
\begingroup
\setlength{\leftmarginii}{12pt}
\begin{itemize}
\item[$\bullet$]
Remark \hspace{5.3pt}\ref{sdffdsfsd}.\ref{sdffdsfsd2}\hspace{3.8pt} for\: $S\cong \UE$
\item[$\bullet$]
Remark \ref{dfxhbghg}.\ref{dfxhbghg3}\hspace{4pt} for\: $S\cong \ID$.
\end{itemize}
\endgroup
\end{enumerate}
\endgroup 
\end{lemma}
\begin{proof}
Proposition \ref{ghdgddgf}.\ref{ghdgddgf2} and Lemma \ref{edffds} show that $S$ admits a $\Sigma$-decomposition that we denote by $\SD$.   
\begingroup
\setlength{\leftmargini}{11pt}
\begin{itemize}
\item
By Corollary \ref{kjdsjkdsjkds}, each compact maximal segment that occurs in $\SD$ is negative; hence,  
\vspace{-3pt}
\begingroup
\setlength{\leftmarginii}{11pt}
\begin{itemize}
\item
Part \ref{fhjfdsfdd2} is clear from Part \ref{fhjfdsfdd1}.
\item
If $C\in \Free(S)$ is given, then the same arguments that we have used in the beginning of the proof of Lemma \ref{gfhhgh}.\ref{gfhhgh1} (see \eqref{kjfdkjfdkjf}) show that $\innt[C]$ cannot contain any boundary point of any segment occurring in $\SD$; which implies that $C$ is completely contained in one of these segments. From this, the claim is clear, because each segment that occurs in $\SD$ is free (see Definition \ref{gfgfgf} for $S\cong \UE$ as well as Remark \ref{dfxhbghg}.\ref{dfxhbghgFree} for $S\cong \ID$). 
\end{itemize}
\endgroup
\item
By Proposition \ref{dhjfdhjfssasa}, $S$ admits no $z$-decomposition. Hence, Part \ref{fhjfdsfdd3} is clear from  Part  \ref{fhjfdsfdd1}.\qedhere
\end{itemize}
\endgroup
\end{proof}

\begin{lemma}
\label{kjdfdfjkdfjkldfjkldfjkldfjkldfjklfk}
If $S$ admits some compact $\MFree(S)\ni \Sigma\subset\innt[S]$, then   
$S$ is either positive or negative.
\end{lemma}
\begin{proof}
Lemma \ref{asklddd} shows that $\Sigma$ is either positive or negative:
\begingroup
\setlength{\leftmargini}{12pt}
\begin{itemize}
\item
	Assume that $\Sigma$ is negative. Then, Lemma \ref{fhjfdsfdd}.\ref{fhjfdsfdd2} shows that $S$ is negative.
\item
	Assume that $\Sigma$ is positive. We show that $S$ admits no negative segment. Then, 
	$S$ is positive  
	by Remark   \ref{dsdsdsdskjdsjkdskjdskjdsds09w0909ew09we09ewewewewewewewcx} (Lemma \ref{asklddd}). 
	Now, Proposition  \ref{dhjfdhjfssasa} shows that $S$ admits no $z$-decomposition. Hence, if $S$ admits a negative segment, then Lemma \ref{jkdsklsdlkdsjkl} shows that $S$ admits a negative segment $\MFree(S)\ni \Sigma'\subset \innt[S]$. Then, Lemma \ref{fhjfdsfdd}.\ref{fhjfdsfdd2} shows  $\Sigma$ is negative, which contradicts the assumption that $\Sigma$ is positive. \qedhere
\end{itemize}
\endgroup
\end{proof}
\noindent
We are ready for the proof of Theorem \ref{dfsofgofg}.
\begin{proof}[Proof of Theorem \ref{dfsofgofg}]
The first statement and Part \ref{dfsofgofg1} are clear from Proposition \ref{dhjfdhjfssasa} and Remark \ref{kjfdjlkfdjkfd}. Moreover, Part \ref{dfsofgofg2} is clear from Lemma  \ref{kjdfdfjkdfjkldfjkldfjkldfjkldfjklfk} as well as Proposition \ref{ghdgddgf}.\ref{ghdgddgf2} ($S\cong \UE$) and Lemma \ref{edffds} ($S\cong \ID$). Finally, Part \ref{dfsofgofg3} is clear from Lemma \ref{fhjfdsfdd}.\ref{fhjfdsfdd1} and  Lemma \ref{fhjfdsfdd}.\ref{fhjfdsfdd3}.
\qedhere
\end{proof}
\begin{corollary}
\label{odsosdopds}
Assume that $S\cong\UE$ is negative. Then, the length $\lent{S}$ of $S$ is odd.
\end{corollary}
\begin{proof}
Let $\MFree(S)\ni\Sigma\subset \innt[S]$ be negative, with $\Sigma$-decomposition $\Sigma_0,\dots,\Sigma_n$ and $[g_0],\dots,[g_n]$ for $n\geq 1$. 
\begingroup
\setlength{\leftmargini}{11pt}
\begin{itemize}
\item 
We write\:  
$\Sigma_0\cap\Sigma_n=\partial_{\Sigma_0}\cap\partial_{\Sigma_n}=\{z_0\}$\: as well as\:   
$\Sigma_{p-1}\cap\Sigma_p=\partial_{\Sigma_{p-1}}\cap\partial_{\Sigma_p}=\{z_p\}$\: for\: $p=1,\dots,n$.
\item
We define\:      
$h_p=g_{p}\cdot g_{p-1}^{-1}$\: for\: $p=1,\dots,n$,\: and recall  (see \eqref{dsnmdsdszucxzuhjcxkjdsiuwiueew433443433443})
\begin{align}
\label{nmsdnmdsnmsnmsdnmsdkjsdsdiusdiuds98ds98ds98ds98dsew}
g_n\cdot \iota(\Sigma_0)=\iota(\Sigma_n)\qquad\quad\text{as well as}\qquad\quad h_p\cdot \iota(\Sigma_{p-1})=\iota(\Sigma_p)\quad\:\: \text{for}\quad\:\: p=1,\dots,n.\hspace{7.4pt}
\end{align}
\vspace{-22pt}
\end{itemize}
\endgroup
\noindent
Since $\Sigma_p$ is negative for $p=1,\dots,n$, we have
\begin{align*}
	g_{n}\in G_{z_0}\qquad\quad \text{as well as}\qquad\quad h_p\in G_{z_p}\quad\: \text{for}\quad\:\: p=1,\dots,n.\hspace{5.9pt}
\end{align*}   
Together with the right side of \eqref{nmsdnmdsnmsnmsdnmsdkjsdsdiusdiuds98ds98ds98ds98dsew}, this implies
\begin{align*}
h_{p+2}\cdot h_{p+1} \cdot \iota(z_p) = h_{p+2}\cdot \iota(z_{p+2})=\iota(z_{p+2})\qquad\quad\forall\: p=0,\dots,n-2.
\end{align*} 
Then, if $n\geq 2$ is even (i.e., if the claim is wrong), we obtain
\begin{align*}
	\iota(z_0)=g_n\cdot \iota(z_0)=(h_n\cdot g_{n-1})\cdot \iota(z_0)={\dots}=(h_{n}\cdot h_{n-1})\cdot (h_{n-2}\cdot h_{n-3})\cdot  {\dots}\cdot (h_2\cdot h_1)\cdot \iota(z_0)=\iota(z_n),
\end{align*}
which contradicts Point \ref{defo1} in Definition \ref{gfgfgf}.
\end{proof}

\subsection{Positive and Negative Decompositions}
\label{asasaghhgfg}
In this final section, 
we investigate the positive and the negative case in more detail. In particular, we provide explicit formulas for the classes  that occur in a $\Sigma$-decomposition of $S$, and show  that in the positive case, the maximal free segments are  continuously distributed in $S$.
\subsubsection{Positive Decompositions}
\label{osdfuisfdauoifufsda}
In this subsection, we investigate  the case where $S$ is positive in more detail:
\begin{proposition}
\label{sadfpoifdsjk}
Assume that $S$ is positive; and let $\MFree(S)\ni \Sigma\subset\innt[S]$ be compact. 
\begingroup
\setlength{\leftmargini}{15pt}
\begin{enumerate}
\item
\label{sadfpoifdsjk2}
Let $S\cong\UE$ with  
$\Sigma$-decomposition $\Sigma_0,\dots,\Sigma_n$ and $[g_0],\dots,[g_n]$. Then, 
\begin{align}
\label{sdaodfsiofdsj1}
\:[g_k]=[g^k]\qquad\hspace{2pt}\forall\: k=0,\dots,n
\qquad\quad\text{holds for each}\qquad\quad g\in [g_1],
\end{align}
hence $[g^n]=[g^{-1}]$ by Lemma \ref{asklddd}.
\item
\label{sadfpoifdsjk1}
Let $S\cong \ID$ 
with $\Sigma$-decomposition $(\{\Sigma_n\}_{n\in \cn},\{[g_n]\}_{n\in \cn})$. Then, 
\begin{align}
\label{sdaodfsiofdsj}
\:[g_n]=[g^n]\qquad \forall\: n\in \cn
\qquad\quad\text{holds for each}\qquad\quad g\in [g_1].
\end{align}
\end{enumerate}
\endgroup 
\end{proposition}
\begin{proof}
We only prove Part \ref{sadfpoifdsjk1},  
because Part \ref{sadfpoifdsjk2} follows 
analogously. According to Lemma \ref{fdfdfdfd}, it suffices to show that \eqref{sdaodfsiofdsj} holds for $g=g_1$. For this, let  $\{z_n\}_{n\in \cnN}\subseteq S$  be as in  Remark \ref{dfxhbghg}.\ref{sd9898dsoiewioewuizewzuuz34387kjredsmx}, and let $\{h_n\}_{n\in \cnN}\subseteq G$ be as in Remark \ref{dfxhbghg}.\ref{dfxhbghg998ds98ds}. Together with  
\eqref{iuiuiujkjhggftfdfdfd54545454545454545trrttrtrtr} and  \eqref{dasposaposao988mdsjdskjdskjdsoidsoidsoisd98ds98dsdsdd11},    Lemma \ref{dfggfgf} applied to the $\Sigma$-decompositions (from Remark \ref{dfxhbghg}.\ref{dfxhbghg3}) that correspond to the compact maximal segments (see Remark \ref{dfxhbghg}.\ref{dfxhbghgFree}) $\{\Sigma_n\}_{\cn_-<n<\cn_+}$,   
shows that
\begingroup
\setlength{\leftmargini}{11pt}
\begin{itemize}
\item
	$[h_n]$\hspace{28.7pt} is the unique class from Proposition \ref{prop:shifttrans} applied to $\Sigma_{n-1}$ and $z_n$,\hspace{10.2pt} for $1\leq n\in \cn$.
\item
	$[(h_{n-1})^{-1}]$ is the unique class from Proposition \ref{prop:shifttrans} applied to $\Sigma_{n-1}$ and $z_{n-1}$,  for $2\leq n\in \cn$.
\end{itemize}
\endgroup
\noindent
Since $\Sigma_{n-1}$ is positive for $1\leq n\in \cn$, Lemma \ref{asklddd} (with  $z_-\equiv z_{n-1}$ and $z_+\equiv z_{n}$ there) yields 
\vspace{-3pt}
\begin{align*}
[h_{n}]\stackrel{\eqref{dskjdskjnmdsnmdsiucsiuszudsd76ds87d87ds98798dsdsdsdsds}}{=}[((h_{n-1})^{-1})^{-1}]=[h_{n-1}]\qquad\quad \forall\:2\leq n\in \cn.
\end{align*}
Since $h_1=g_1=g$ holds, we inductively obtain $[h_n]=[g]$  for all $1\leq n \in \cn$. Hence, for $1\leq n\in \cn$, \eqref{sdaodfsiofdsj} is clear from the second line in  \eqref{dsndsnmdsnmdsjdskjdskjdsoidsoidsoisdds98ds9898ds98dsdsdd}  as well as Lemma \ref{fdfdfdfd}. Since $[g_{-1}]=[g_1^{-1}]=[g^{-1}]$ holds by Lemma \ref{asklddd} and Lemma \ref{fdfdfdfd}, we can argue analogously to show that \eqref{sdaodfsiofdsj} also holds for all $\cn\ni n\leq -1$.
\end{proof}
\begin{convention}
For the rest of this subsection, we consider the situation in Proposition \ref{sadfpoifdsjk}, i.e., 
$S$ is positive with fixed compact $\MFree(S)\ni \Sigma\subset\innt[S]$, and fixed  $\Sigma$-decomposition $\SD$ of $S$:
\begingroup
\setlength{\leftmargini}{11pt}
\begin{itemize}
\item 
$\Sigma_0,\dots,\Sigma_n$\:\: and\:\: $[g_0],\dots,[g_n]$\:\: if\:\: $S\cong \UE$\:\: holds. 
\item
\hspace{21pt}$(\{\Sigma_n\}_{n\in \cn},\{[g_n]\}_{n\in \cn})$\:\hspace{21.5pt}
 if\:\: $S\cong \ID$\hspace{16.8pt} holds.
\end{itemize}
\endgroup
\noindent
In the first case, we set $\Sigma_{n+1}:=\Sigma_0$.
\end{convention}
\begin{remark}
\label{fdkjlfdjlkfdlksdsdsdsjfdjkfd}
\noindent

\vspace{-5pt}
\begingroup
\setlength{\leftmargini}{15pt}
{
\renewcommand{\theenumi}{{\sf\arabic{enumi})}} 
\renewcommand{\labelenumi}{\theenumi}
\begin{enumerate}
\item
\label{fdkjlfdjlkfdlksdsdsdsjfdjkfd2}
Assume that $S\cong\UE$ holds; and let $\ovl{\Sigma}_0,\dots,\ovl{\Sigma}_n$, $[\ovl{g}_0],\dots,[\ovl{g}_n]$  with $n\geq 2$, be the (``inversely orientated'') decomposition from Lemma \ref{asdhlkdsajkd}. Then, it is immediate from Proposition  \ref{sadfpoifdsjk}.\ref{sadfpoifdsjk2} that
\begin{align*}
\:[\ovl{g}_k]=[g^{-k}]\qquad\hspace{2pt}\forall\: k=0,\dots,n
\qquad\quad\text{holds for each}\qquad\quad g\in [g_1].
\end{align*}
\vspace{-15pt}
\item
\label{fdkjlfdjlkfdlksdsdsdsjfdjkfd1}
Assume that $S\cong\ID$ holds; and let $(\{\ovl{\Sigma}_n\}_{n\in \ocn},\{[\ovl{g}_n]\}_{n\in \ocn})$ be the (``inversely orientated'') decomposition from Remark \ref{dfxhbghg}.\ref{dfxhbghg2}.  Then, it is immediate from Proposition  \ref{sadfpoifdsjk}.\ref{sadfpoifdsjk1} that 
\begin{align*}
\:[\ovl{g}_n]=[g^{-n}]\qquad \forall\: n\in \ocn\hspace{29.6pt}\qquad\quad\text{holds for each}\qquad\quad g\in [g_1].
\end{align*}
\vspace{-15pt} 
\end{enumerate}}
\endgroup 
\end{remark} 
\noindent
We next want to figure out, which classes can occur on the right side of \eqref{sdaodfsiofdsj1} and \eqref{sdaodfsiofdsj} for any further positive segment $\MFree(S)\ni \Sigma'\subset \innt[S]$. For this, we observe the following:
\begingroup
\setlength{\leftmargini}{17pt}
{
\renewcommand{\theenumi}{{\sf\Alph{enumi})}} 
\renewcommand{\labelenumi}{\theenumi}
\begin{enumerate}
\item
Assume that $\Sigma'$ equals some $\Sigma_p$ that occurs in $\SD$. The  
$\Sigma'$-decomposition
\begingroup
\setlength{\leftmarginii}{11pt}
\begin{itemize}
\item
from Remark\hspace{5pt} \ref{sdffdsfsd}.\ref{sdffdsfsd2}\: if\: $S\cong \UE$\: holds
\item
from Remark \ref{dfxhbghg}.\ref{dfxhbghg3}\: if\: $S\cong \ID$\hspace{14.1pt} holds
\end{itemize}
\endgroup
\noindent
is called {\bf $\boldsymbol{\SD}$-oriented} in the following.  Proposition \ref{sadfpoifdsjk} and Lemma \ref{fdfdfdfd} imply $[g_1']=[g]$; hence, we have 
\begingroup
\setlength{\leftmarginii}{11pt}
\begin{itemize}
\item
$[g'_k]\hspace{0.8pt}=[g^k]$\hspace{0.3pt}\: for $k=0,\dots,n$\: if\: $S\cong \UE$\: holds,\: by Proposition \ref{sadfpoifdsjk}.\ref{sadfpoifdsjk2}.
\item
$[g_n']=[g^n]$\: for all $n\in \cn'$\hspace{12pt}\: if\: $S\cong \ID$\hspace{12.0pt}\: holds,\: by Proposition  \ref{sadfpoifdsjk}.\ref{sadfpoifdsjk1}. 
\end{itemize}
\endgroup
\item
Assume that $\Sigma'\neq \Sigma_n$ holds for each $\Sigma_n$ that occurs in $\SD$. Then, there exists 
\begingroup
\setlength{\leftmarginii}{11pt}
\begin{itemize}
\item
\hspace{7pt}$0\leq p\leq n$\:\hspace{5.8pt} if\: $S\cong \UE$
\item
$\cn_-\leq p< \cn_+$\: if\: $S\cong \ID$
\end{itemize}
\endgroup
\noindent
unique 
as well as 
$z\in \partial_{\Sigma_{p}}\cap \partial_{\Sigma_{p+1}}$ unique,    
such that \hspace*{\fill}
(recall $\Sigma_{n+1}=\Sigma_0$ if $S\cong \UE$)
\begin{align}
\label{sdsdsds}
	\big|\partial_{\Sigma'}\cap \innt[\Sigma_p]\big|=1=\big|\partial_{\Sigma'}\cap \innt[\Sigma_{p+1}]\big|,\qquad\Sigma'\subset \innt[\Sigma_{p}\cup \Sigma_{p+1}],\qquad  z\in\innt[\Sigma'].
\end{align} 
{\it Proof of the Claim.}
By maximality (see Remark \ref{dfxhbghg}.\ref{dfxhbghgFree} for $S\cong \ID$), we can neither have $\Sigma_\ell\subset \Sigma'$ nor $\Sigma'\subset \Sigma_\ell$ 
\begingroup
\setlength{\leftmarginii}{11pt}
\begin{itemize}
\item
for\: $\ell=0,\dots,n$\:\hspace{4pt} if\: $S\cong\UE$\: holds,
\item
for\: $\cn_-<\ell<\cn_+$\: if\: $S\cong\ID$\hspace{14pt} holds.
\end{itemize}
\endgroup
\noindent
Moreover, since $\MFree(S)\ni\Sigma'\subset \innt[S]$ is compact, in the case $S\cong \ID$, we cannot have
\begin{align*}
	\Sigma'\subseteq \Sigma_{\cn_-}\quad \text{if}\quad \cn_-\neq -\infty\qquad\quad \text{or}\qquad\quad \Sigma'\subseteq \Sigma_{\cn_+}\quad \text{if}\quad \cn_+\neq\infty.
\end{align*}
 From this, the claim is clear.\hspace*{\fill}$\ddagger$
\vspace{2pt}
 
\noindent 
Now, let us write $\partial_{\Sigma'}=\{z_-,z_+\}$, whereby   
$z_-$ and $z_+$ denote the boundary points of $\Sigma'$ that are contained in $\innt[\Sigma_{p}]$ and $\innt[\Sigma_{p+1}]$, respectively. The unique $\Sigma'$-decomposition $\SD'$ of $S$ with $z_+\in\partial_{\Sigma'_{1}}$ is called {\bf $\boldsymbol{\SD}$-oriented} in the following.\footnote{Recall  that if $S\cong\UE$ and $\lent{S}=n=1$ holds (see Proposition \ref{ghdgddgf}.\ref{ghdgddgf3}), then Lemma \ref{asdhlkdsajkd} shows that there exists  
only one $\Sigma'$-decomposition of $S$. In the other cases, the condition $z_+\in\Sigma'_{1}$ obviously fixes a unique $\Sigma'$-decomposition of $S$.} 
Let $\SD'$ be given by   
\begingroup
\setlength{\leftmarginii}{11pt}
\begin{itemize}
\item 
$\Sigma'_0,\dots,\Sigma'_n$\:\: and\:\: $[g'_0],\dots,[g'_n]$\:\: if\:\: $S\cong \UE$\:\: holds, \hspace*{\fill}(recall  Proposition \ref{ghdgddgf}.\ref{ghdgddgf3})
\item
\hspace{18.3pt}$(\{\Sigma'_n\}_{n\in \cn'},\{[g'_n]\}_{n\in \cn'})$\:\hspace{18.5pt}
 if\:\: $S\cong \ID$\hspace{16.3pt} holds,
\end{itemize}
\endgroup
\noindent 
and observe the following:  
\begingroup
\setlength{\leftmarginii}{14pt}
{
\renewcommand{\theenumii}{.{\small\sf\arabic{enumii})}} 
\renewcommand{\labelenumii}{{\small\sf\arabic{enumii})}}
\begin{enumerate}
\item
\label{poepopwepowepwoepowpwoeweopop1}
	Since $\Sigma'$ is positive, $g_1'\cdot \iota(\Sigma'_{z_-})=\iota(\Sigma'_{z_+})$ holds for\hspace*{\fill}(\he$g_1'\cdot \iota(z_-)=\iota(z_+)$\he) 
	\begingroup
\setlength{\leftmarginiii}{11pt}
\begin{itemize}
\item
	 $\Sigma'_{z_-}$ a compact boundary segment of $\Sigma'_0$ with $\partial_{\Sigma'_{z_-}}\ni z_-$,
\item
	$\Sigma'_{z_+}$ a compact boundary segment  of $\Sigma'_1$ with $\partial_{\Sigma'_{z_+}}\ni z_+$.
\end{itemize}
\endgroup
\item
\label{poepopwepowepwoepowpwoeweopop2}
Since $z_-\in \innt[\Sigma_{p}]$ and $z_+\in \innt[\Sigma_{p+1}]$ holds, Point {\small\sf 1)} implies $g_1'\cdot \iota|_{\Sigma_{p}}\cpsim \iota|_{\Sigma_{p+1}}$:
	\begingroup
\setlength{\leftmarginiii}{11pt}
\begin{itemize}
\item
	Assume  $S\cong\ID$, or $S\cong\UE$ with $0\leq p< n$. Then,  
	Proposition \ref{sadfpoifdsjk} together with Lemma \ref{fdfdfdfd} (first step) yields:
\begin{align*}
	g_1'\cdot \iota|_{\Sigma_{p}}\cpsim \iota|_{\Sigma_{p+1}} \quad\:\:\:\Longrightarrow\quad\:\:\: g_1'\cdot \iota|_{\Sigma_{p}}\cpsim g\cdot \iota|_{\Sigma_{p}}\quad\:\:\:\Longrightarrow\quad\:\:\: [g_1']=[g]\hspace{101.8pt}
\end{align*}
\item
	Assume $S\cong\UE$ with $p=n$ ($\Sigma_{n+1}=\Sigma_0$). Then,  
	Proposition \ref{sadfpoifdsjk}.\ref{sadfpoifdsjk2} (second and  third step) yields:
\begin{align*}
	g_1'\cdot \iota|_{\Sigma_{p}}\cpsim \iota|_{\Sigma_{p+1}} \quad\:\:\:&\Longrightarrow\quad\:\:\: (g'_1)^{-1}\cdot \iota|_{\Sigma_{0}}\cpsim \iota|_{\Sigma_{n}} \qquad\quad\:\:\:\Longrightarrow\qquad\:\:\: (g'_1)^{-1}\cdot \iota|_{\Sigma_{0}}\cpsim g^n\cdot \iota|_{\Sigma_{0}}  \\
&\Longrightarrow\quad\:\:\: [(g'_1)^{-1}]=[g^n]=[g^{-1}]\quad\:\:\: \stackrel{\text{Lemma }\ref{fdfdfdfd}}{\Longrightarrow}\quad\:\:\: [g'_1]=[g]
\end{align*}	
\end{itemize}
\endgroup
\end{enumerate}}
\endgroup
\end{enumerate}}
\endgroup
\noindent
We have shown the following statement: 
\begin{lemma}
\label{fgfghghg}
Assume that $S$ is positive; and let $\MFree(S)\ni \Sigma\subset\innt[S]$ be compact.  
Then, the class that occurs on the right sides of \eqref{sdaodfsiofdsj1} and \eqref{sdaodfsiofdsj} is the same for each $\SD$-oriented $\Sigma'$-decomposition of $S$ ($\MFree(S)\ni \Sigma'\subset \innt[S]$).
\end{lemma}
\noindent
We finally want to show that in the positive case, $S$ indeed admits much more positive segments than only those  
that 
belong to $\SD$:
\begin{proposition}
\label{lkfdlkfdlkfdlkfdlk}
Let $S$ be positive; and let $\MFree(S)\ni \Sigma\subset\innt[S]$ be compact with $\Sigma$-decomposition $\SD$. We write $\partial_{\Sigma_0}=\{z_0,z_1\}$ with $z_1\in \partial_{\Sigma_1}$ (\he hence\hspace{1pt} $g_1\cdot \iota(z_0)=\iota(z_1)$), and let $\Seg(S)\ni \KK\subseteq \Sigma_1$ be compact with $\partial_\KK=\{z_1,z_1'\}$. Then, there exists $\MFree(S)\ni\Sigma'\subseteq \Sigma_0\cup\Sigma_1$ compact (necessarily positive) with 
$$
\KK\subseteq \Sigma'\qquad \text{and}\qquad  \partial_{\Sigma'}=\{z_0',z_1'\}\qquad\text{for some}\qquad z_0'\in \Sigma_0.
$$ 
Additionally, the following assertions hold:
\begingroup
\setlength{\leftmargini}{15pt}{
\renewcommand{\theenumi}{{\alph{enumi}})} 
\renewcommand{\labelenumi}{\theenumi}
\begin{enumerate}  
\item
\label{lkfdlkfdlkfdlkfdlks1}
We have the implication:
\begin{align*}
 \hspace{70.5pt}
  z_1'\in \innt[\Sigma_1]
\qquad\:\:\Longrightarrow\qquad\:\: z_0'\in \innt[\Sigma_0]\quad\wedge\quad z_1\in \innt[\Sigma']\quad\wedge\quad \Sigma'\subset \innt[\Sigma_0\cup\Sigma_1]
\end{align*}
\vspace{-17pt}    
\item
\label{lkfdlkfdlkfdlkfdlks2}
If $S\cong \ID$ holds with $\cn_+=1$,  then we have the  implication:
\begin{align*}
\KK=\Sigma_1 
\quad\wedge\quad \Sigma_+\subset \Sigma_0\qquad\:\:\Longrightarrow\qquad\:\: 
z_0'\in \innt[\Sigma_0]\quad\wedge\quad z_1\in \innt[\Sigma']\quad\wedge\quad \Sigma'\subseteq \Sigma_0\cup\Sigma_1
\end{align*}
\end{enumerate}}
\endgroup 
\end{proposition}
\begin{proof} 
We first observe that \hspace*{\fill}(use that $S\cong \UE$ or $S\cong \ID$ holds)
\vspace{-16pt}
$$
\Seg(S)\ni S':= \overbrace{(\Sigma_0 \setminus \{z_0\})}^{\displaystyle =: \tilde{\Sigma}_0\in \Seg(S)}\hspace{-1.5pt}\cup\:\hspace{1.5pt} \KK\qquad \text{holds, with}\qquad S'\subset S.
$$ 
Then, Remark \ref{cnmnmcviufeiureiure}.\ref{cnmnmcviufeiureiure2}   
provides a homeomorphism 
$\homeo\colon S'\rightarrow \ID'\equiv (x_0,x_1']$ with
\begin{align*}
	\homeo^{-1}(\tilde{\Sigma}_0)=(x_0,x_1]\quad\text{and}\quad  \homeo^{-1}(\KK)=[x_1,x_1']\qquad\:\: \text{for}\qquad\:\:
	x_1:=\homeo^{-1}(z_1)\quad\text{and}\quad  x_1':=\homeo^{-1}(z_1').
\end{align*}
\begingroup
\setlength{\leftmargini}{11pt}
\begin{itemize}
\item
We have $\KK\in \Free(S)$ by Lemma \ref{jdkjfdkjfd}, as $\KK\subseteq \Sigma_1\in \Free(S)$ holds. Hence, $\KK\in \Free(S')$ holds by \eqref{jdkjfdkjfddf} as $\KK\subseteq S'$. Lemma \ref{fhdhh} thus provides some $\Sigma'\in \MFree(S')$ with $\KK\subseteq \Sigma'$. 
\item
We set $D:=\homeo^{-1}(\Sigma')$. Then,    
$\ID'\neq D$  holds, because
\vspace{-15pt}
$$
\ID'=D\qquad\Longrightarrow\qquad S'=\Sigma'\in \Free(S')\qquad\stackrel{\eqref{jdkjfdkjfddf}}{\Longrightarrow}\qquad S'\in \Free(S)
\quad\:\:\stackrel{\text{Lemma }\ref{kjkjdskjkjdsaassaa}}{\Longrightarrow}\quad\:\:  \Sigma_0\subset \overbrace{\Sigma_0\cup \KK}^{ =\:\clos[S']}\in \Free(S),
$$
which contradicts $\Sigma_0\in \MFree(S)$. 
 Since 
 $\Sigma'$ is closed in $S'$ by Lemma \ref{kjkjdskjkjdsaassaa}, we thus have
\begin{align*}
 D=[x_0',x_1']\qquad\text{for some}\qquad x_0<x_0' \leq  x_1<x_1'.
\end{align*} 
\vspace{-20pt}

 In particular, $\Sigma'$ is compact with $\Sigma'\subset S'$.
\item
$\Sigma'$ is positive w.r.t.\ $(S',\iota|_{S'})$; hence, $\Sigma'\in \MFree(S)$ holds by Lemma \ref{qasggpapd}. 
\vspace{1pt}

{\it Proof.} Assume that $\Sigma'$ is negative, and set $z_0':=\homeo^{-1}(x_0')$. Let $[g'],\Sigma_{z_0'},\Sigma_{b'}$ be  as in \ref{starlkdslkdslkdslksdlkdslklkdsdsdsds} in Proposition \ref{prop:shifttrans} applied to $\Sigma'$ and $z_0'$, with $g'\in G_{z'_0}$ and $b'=z_0'$. The following two cases can occur:
\vspace{-3pt} 
\begingroup
\setlength{\leftmarginii}{12pt}
\begin{itemize}
\item
$x_0'=x_1$ holds, hence $z_0'=z_1$:
\vspace{2pt}

Then, the same elementary arguments as used in Lemma \ref{dsoioidslkdskldskldskldsds98ds09ds9ds9898ds} show that $\Sigma_0$ is negative; specifically, that $[(g')^{-1}]$ is the unique class from Proposition \ref{prop:shifttrans} applied to $\Sigma_0$ and $z_1$, so that $[g_1]=[(g')^{-1}]$ holds. This, however, implies $g_1\in G_{z_1}$, which contradicts that $\Sigma_0$ is positive.
\vspace{2pt}
\item
$x_0'<x_1$ holds, hence $z_0'\in \innt[\Sigma_0]$:
\vspace{2pt}

Then, $g'\cdot \iota(\Sigma_{b'})=\iota(\Sigma_{z_0'})$ implies $g'\cdot \iota|_{\Sigma_0}\cpsim \iota|_{\Sigma_0}$, hence $g'\in G_S$ as $\Sigma_0$ is free. This, however, contradicts that $[g']\neq [e]$ holds. 
\hspace*{\fill}$\ddagger$
\end{itemize}
\endgroup
\end{itemize}
\endgroup
\noindent
It remains to prove the implications in \ref{lkfdlkfdlkfdlkfdlks1} and \ref{lkfdlkfdlkfdlkfdlks2}. In both cases, it suffices to show that the  condition on the corresponding left side implies  $\KK\subset \Sigma'$, hence $x_0'<x_1$:
\vspace{-2pt}
\begingroup
\setlength{\leftmargini}{15pt}{
\renewcommand{\theenumi}{{\alph{enumi}})} 
\renewcommand{\labelenumi}{\theenumi}
\begin{enumerate}
\item
Assume $z_1'\in \innt[\Sigma_1]$, hence $\KK\subset \Sigma_1$. 
Then,  
$\KK=\Sigma'$ yields the contradiction
$ 
\MFree(S)\ni \Sigma'=\KK\subset \Sigma_1\in \Free(S)$.  
\item
Assume $S\cong \ID$ with $\cn_+=1$, as well as $\KK=\Sigma_1$ and $\Sigma_+\subset \Sigma_0$: 
\begingroup
\setlength{\leftmarginii}{11pt}
\begin{itemize}
\item
There exists a chart $(U,\psi)$ around $z_1$ with $U\cap \Sigma_+=\emptyset$. 
\vspace{2pt}

(\he Since $\Sigma_0$ is positive, we have $z_0\in \Sigma_+$, hence $z_1\notin \Sigma_+$ as $\Sigma_+$ is a proper boundary segment of $\Sigma_0$.\he)
\end{itemize}
\endgroup
Assume now that $\KK=\Sigma'$ holds:
\begingroup
\setlength{\leftmarginii}{11pt}
\begin{itemize}
\item
We have $\Sigma_1=\KK=\Sigma'\in \MFree(S)$, and $\Sigma_1$ is positive (as $S$ is positive).  
\item
Let $[g'],\Sigma_{z_1},\Sigma_{b'}$ be  as in \ref{starlkdslkdslkdslksdlkdslklkdsdsdsds} in Proposition \ref{prop:shifttrans} applied to $\Sigma_1,z_1$, hence $g'\cdot\iota(\Sigma_{b'})=\iota(\Sigma_{z_1})$.  
\item
Shrinking $\Sigma_{z_1}$ around $z_1$ and $\Sigma_{b'}$ around $b'$, we can assume by the first point that $\Sigma_{z_1}\subseteq \Sigma_0\setminus \Sigma_+$ holds. 
\item
Then, we have $g'^{-1}\cdot \iota|_{\Sigma_0}\cpsim \iota|_{\Sigma_1}$, hence $[g'^{-1}]=[g_1]$ by Lemma \ref{dfggfgf}. \item
Lemma \ref{fdfdfdfd} shows $[g']=[g_1^{-1}]$, so that the third point yields 
$$
\iota(\Sigma_{z_1})=g_1^{-1}\cdot \iota(\Sigma_{b'})\subseteq g_1^{-1}\cdot \iota(\Sigma_1)=\iota(\Sigma_+). 
$$
\end{itemize}
\endgroup
We obtain $\iota(\Sigma_0\setminus \Sigma_+)\supseteq \iota(\Sigma_{z_1})\subseteq \iota(\Sigma_+)$, which contradicts that $\iota$ is injective.  \qedhere
\end{enumerate}}
\endgroup
\end{proof}

\begin{corollary}
\label{fdshjfdf}
Assume that $S$ is positive. Then, to each $z\in \innt[S]$,  there exists some (necessarily positive) $\tilde{\Sigma}\in \MFree(S)$  with $z\in \innt[\tilde{\Sigma}]$. 
\end{corollary}
\begin{proof}
Let $\MFree(S)\ni \Sigma\subset \innt[S]$ be compact (positive), with $\Sigma$-decomposition $\SD$:
\begingroup
\setlength{\leftmargini}{11pt}
\begin{itemize}
\item
If $S\cong \UE$ holds, then we denote $\SD$ by $\Sigma_0,\dots,\Sigma_n$, $[g_0],\dots,[g_n]$ for $n\geq 1$, with boundary points $z_0,\dots,z_{n-1}\in S$ as in Remark \ref{sdffdsfsd}.\ref{sdffdsfsddsiuiudsiusd}:\hspace*{\fill}($z_{n+1}:=z_0$\: and\: $\Sigma_{n+1}:=\Sigma_0$)
\vspace{-3pt}
\begingroup
\setlength{\leftmarginii}{12pt}
\begin{itemize}
\item
The claim is clear if $z\in \innt[\Sigma_p]$ holds for some $0\leq p\leq n$.
\item
In the other case, $z=z_{p+1}$ holds for some $0\leq p\leq n$, and we set $\Sigma':=\Sigma_{p}$. The claim now follows from Proposition \ref{lkfdlkfdlkfdlkfdlk}.\ref{lkfdlkfdlkfdlkfdlks1}, when applied to the $\Sigma'$-decomposition $\SD'$ from  Remark \ref{sdffdsfsd}.\ref{sdffdsfsd2}. 
\end{itemize}
\endgroup
\item
If $S\cong \ID$ holds, then we denote $\SD$ by $(\{\Sigma_n\}_{n\in \cn},\{[g_n]\}_{n\in \cn})$ (with $\cn\in \CN$), with boundary points $\{z_n\}_{n\in \cnN}\subseteq S$ as in Remark \ref{dfxhbghg}.\ref{sd9898dsoiewioewuizewzuuz34387kjredsmx}:
\vspace{-3pt}
\begingroup
\setlength{\leftmarginii}{12pt}
\begin{itemize}
\item
Assume that $z\in \innt[\Sigma_n]$ holds for some $\cn_-<n<\cn_+$. Then, the claim is clear from Remark \ref{dfxhbghg}.\ref{dfxhbghgFree}. 
\item
Assume that $z\in \innt[\Sigma_n]$ holds for $-\infty<\cn_-=n$,    or for $n=\cn_+<\infty$: 
\begingroup
\setlength{\leftmarginiii}{11pt}
\begin{itemize}
\item
We can assume that  $n=\cn_+<\infty$ holds, just by 
replacing $\SD$ by the  (``inversely orientated'') $\Sigma$-decomposition from Remark \ref{dfxhbghg}.\ref{dfxhbghg2} in the case $-\infty<\cn_-=n$. 
\item
We set $p:=\cn_+-1$ and $\Sigma':=\Sigma_{p}$. Then,  
we can assume that  $n=\cn_+=1$ holds, just by 
replacing $\SD$ by the $\Sigma'$-decomposition  from  Remark \ref{dfxhbghg}.\ref{dfxhbghg3}.
\item
The claim now follows from Proposition \ref{lkfdlkfdlkfdlkfdlk}.\ref{lkfdlkfdlkfdlkfdlks1}, just by fixing a compact segment $\KK\subseteq \Sigma_{\cn_+}$ with $z_{\cn_+}\in \partial_\KK$ and $z\in \innt[\KK]$.
\end{itemize}
\endgroup
\item 
Assume that $z=z_{n}$ holds for some $n\in \cnN$.  
\begingroup
\setlength{\leftmarginiii}{11pt}
\begin{itemize}
\item
We can assume that $n\geq 1$ holds, just by replacing $\SD$ by the (``inversely orientated'') $\Sigma$-decomposition from Remark \ref{dfxhbghg}.\ref{dfxhbghg2} in the case $n\leq -1$. 
\item
We set $p:=n-1$ and $\Sigma':=\Sigma_{p}$. Then,  
we can assume that  $n=1$ holds, just by 
replacing $\SD$ by the $\Sigma'$-decomposition  from  Remark \ref{dfxhbghg}.\ref{dfxhbghg3}. 
\item 
The claim now follows from Proposition \ref{lkfdlkfdlkfdlkfdlk}.\ref{lkfdlkfdlkfdlkfdlks1}.\qedhere
\end{itemize}
\endgroup
\end{itemize}
\endgroup
\end{itemize}
\endgroup
\end{proof}

\subsubsection{Negative Decompositions}
In this final subsection, we investigate  the case where $S$ is negative in more detail. Specifically, we provide explicit formulas for the classes that occur in a fixed $\Sigma$-decomposition of $S$ with $\MFree(S)\ni\Sigma\subset\innt[S]$.   
For this, we define the map  
  $\sigma\colon \ZZ_{\neq 0}\rightarrow \{-1,1\}$  by 
\begin{align*}
\sigma(n):=
\begin{cases} 
	(-1)^{n-1} &\mbox{if}\quad n > 0 \\ 
	(-1)^n & \mbox{if}\quad n < 0.
\end{cases} 
\end{align*}
We have the following analogue to Proposition  \ref{sadfpoifdsjk}:
\begin{proposition}
\label{sdsdsdsdffghhh}
Assume that $S$ is negative; and let $\MFree(S)\ni \Sigma\subset\innt[S]$ be compact. 
\begingroup
\setlength{\leftmargini}{15pt}
\begin{enumerate}
\item
\label{sdsdsdsdffghhh2}
Let $S\cong\UE$ with  
$\Sigma$-decomposition $\Sigma_0,\dots,\Sigma_n$ and $[g_0],\dots,[g_n]$, and set $g_{-1}:=g_n$. Then,   
\begin{align}
\label{sdsdffghhh2}
 \:[g_k]=[g_{\sigma(1)}\cdot {\dots}\cdot g_{\sigma(k)}]
 \hspace{21.5pt}\qquad\quad\forall\: 1\leq k\leq n.
\end{align}
\item
\label{sdsdsdsdffghhh1}
Let $S\cong \ID$ 
with $\Sigma$-decomposition $(\{\Sigma_n\}_{n\in \cn},\{[g_n]\}_{n\in \cn})$. 
Then,
\begin{align}
\label{sdsdffghhh1}
 \:[g_n]=[g_{\sigma(\sign(n))}\cdot {\dots}\cdot g_{\sigma(n)}]\qquad\quad\forall\: n\in \cnN.\footnotemark \hspace{6.5pt}
\end{align}
\footnotetext{Recall $\sign\colon \ZZ\setm\{0\}\ni \pm\he |n|\mapsto \pm 1\in \{-1,1\}$.}
\end{enumerate}
\endgroup
\end{proposition}
\noindent
The proof of Proposition \ref{sdsdsdsdffghhh} will now be established step by step: 
\begin{claim}
\label{sdoidoidsoisoidoidsoidsiodsdsds}
 Lemma \ref{sdsdsdsdffghhh}.\ref{sdsdsdsdffghhh2} follows from Lemma \ref{sdsdsdsdffghhh}.\ref{sdsdsdsdffghhh1}.
\begin{proof}
Assume $S\cong\UE$, and let $\SD$ denote the corresponding $\Sigma$-decomposition in Lemma \ref{sdsdsdsdffghhh}.\ref{sdsdsdsdffghhh2}.  
Obviously, Lemma \ref{sdsdsdsdffghhh}.\ref{sdsdsdsdffghhh2} holds for $n=1$. Let thus $n\geq 2$: 
\begingroup
\setlength{\leftmargini}{11pt}
\begin{itemize}
\item
We fix $\Seg(S)\ni S'\subset S$ open with $\Sigma_0,\dots,\Sigma_{n-1}\subseteq S'$.
\item
We let $\Sigma'_{-1},\Sigma'_n\in \Seg(S)$ denote the components of $S'\cap \Sigma_n$ (boundary segments of $\Sigma_n$) that share a boundary point with $\Sigma_0,\Sigma_{n-1}$, respectively. 
\item
We define a $\Sigma$-decomposition $\SD'\equiv (\{\Sigma'_n\}_{n\in \cn},\{[g'_n]\}_{n\in \cn})$ of $S'$ by
\vspace{-2pt} 
\begingroup
\setlength{\leftmarginii}{13pt}
\begin{itemize}
\item
\hspace{6.5pt}$\cn:=\{-1,0,1,\dots,n\}$,
\vspace{3pt}
\item
$\Sigma'_p:=\Sigma_p$\: for\: $p=0,\dots,n-1$\: as well as\: $\Sigma'_{-1},\Sigma'_n$\: as above,
\vspace{2pt}
\item
\hspace{2.4pt}$g'_p:=g_p$\:\hspace{2.4pt} for\: $p=0,\dots,n$\:\qquad\hspace{0.5pt}as well as\: $g_{-1}:=g_n$.
\end{itemize}
\endgroup
\end{itemize}
\endgroup
\noindent
Then, \eqref{sdsdffghhh2} holds for $\SD$ if \eqref{sdsdffghhh1} holds for $\SD'$.\qedhere
\end{proof}
\end{claim}
\noindent
Now, to investigate the situation in Lemma \ref{sdsdsdsdffghhh}.\ref{sdsdsdsdffghhh1}, we first observe the following:
\begin{remark}
\label{dspodslkdewkjewiuewiukjdskjds9898ds98ds98dsds}
Assume that $S\cong \ID$ holds; and let furthermore
\begingroup
\setlength{\leftmargini}{11pt}
\begin{itemize}
\item[$*$]
$\MFree(S)\ni\Sigma_\pm\subset \innt[S]$ be negative, with 
\begin{align*}
\partial_{\Sigma_\pm}=\{z_\pm,z\}\qquad\quad&\text{and}\qquad\quad\Sigma_-\cap\Sigma_+=\partial_{\Sigma_-}\cap\partial_{\Sigma_+}=\{z\}\\[2pt]
\text{as }&\text{well as}\\[-4pt]
	h\cdot \iota(\Sigma_\pm)=\iota(\Sigma_\mp)\qquad\text{for}\qquad h\in G_z\setm G_S\qquad\text{h}\!&\:\text{ence}\qquad h\cdot \iota(z_\pm)=\iota(z_\mp)\qquad\text{and}\qquad [h]\stackrel{\eqref{ajaahhdsadfdgdgfs}}{=}[h^{-1}].\hspace{15pt} 
\end{align*} 
\vspace{-17pt}
\item[$*$]
$\Seg(S)\ni \Sigma_{\pm\pm}\subset S$ be closed in $S$, with $\Sigma_{\pm\pm}\cap \Sigma_\pm=\partial_{\Sigma_{\pm\pm}}\cap \partial_{\Sigma_{\pm}}=\{z_\pm\}$ such that the following holds:
\vspace{-2pt}
\begingroup
\setlength{\leftmarginii}{11pt}
\begin{itemize}
\item
$\iota|_{\Sigma_{\pm\pm}}$ is an embedding.
\vspace{3pt}
\item
There exist $h_\pm\in G_{\pm z}\setm G_S$, as well as  boundary segments $\Sigma_{z_\pm}\subseteq \Sigma_\pm$ of $\Sigma_\pm$   with 
\begin{align}
\label{fdfdsoelkfdsoelfds}
\hspace{22.5pt}z_{\pm}\in \partial_{\Sigma_{z_\pm}}\qquad\quad
\text{and}\qquad\quad 
h_\pm\cdot \iota(\Sigma_{z_\pm})= \iota(\Sigma_{\pm\pm}).\:\:\: 
\end{align}
\end{itemize}
\endgroup 
\end{itemize}
\endgroup
\noindent
Then, the following two identities hold:
\begin{align}
\label{ldslkdlkdsoidsoidsoids09ds09ds09ds09sd09dsdsds}
	[h_-]=[h\cdot h_+\cdot h]\qquad\quad \text{and}\qquad\quad [h_+]=[h\cdot h_-\cdot h]\:\:\:
\end{align}
\begin{proof}
We only prove the left side of \eqref{ldslkdlkdsoidsoidsoids09ds09ds09ds09sd09dsdsds}, because   the right side follows analogously:
\begingroup
\setlength{\leftmargini}{11pt}
\begin{itemize}
\item  
Since $h\cdot \iota(\Sigma_-) =\iota(\Sigma_+)$ holds, with $h\cdot \iota(z_-)=\iota(z_+)$ and $z_\pm\in \innt[S]$, Lemma \ref{lemma:BasicAnalytt1} implies that 
\begin{align}
\label{dshjdsaudsaidsa}
h\cdot \iota(\Sigma'_{--}) =\iota(\Sigma'_{++})
\end{align}
holds for certain compact boundary segments $\Sigma'_{\pm\pm}\subseteq \Sigma_{\pm\pm}$ of $\Sigma_{\pm\pm}$ with $z_\pm\in \partial_{\Sigma'_{\pm\pm}}$. 
\item 
We obtain from \eqref{dshjdsaudsaidsa} that
\begin{align*}
h \cdot \iota|_{\Sigma_{--}}\cpsim \iota|_{\Sigma_{++}}\qquad\quad&\stackrel{\eqref{fdfdsoelkfdsoelfds}}{\Longrightarrow}\qquad\quad\hspace{34.7pt}
	h \cdot \iota|_{\Sigma_{--}}\cpsim h_+\cdot \iota|_{\Sigma_{+}}\\
	\qquad\quad&\stackrel{\eqref{fdfdsoelkfdsoelfds}}{\Longrightarrow}\qquad\quad \hspace{12.6pt}(h \cdot h_-)\cdot\iota|_{\Sigma_{-}}\cpsim h_+\cdot \iota|_{\Sigma_{+}}\\
	\qquad\quad&\stackrel{\phantom{\eqref{fdfdsoelkfdsoelfds}}}{\Longrightarrow}\qquad\quad (h \cdot h_-\cdot h)\cdot\iota|_{\Sigma_{+}}\cpsim h_+\cdot \iota|_{\Sigma_{+}}\\
&\hspace{-10.2pt}\stackrel{\Sigma_+\he\in\:  \Free(S)}{\Longrightarrow}\qquad\hspace{55pt} [h_+]=[h\cdot h_-\cdot h]	
\end{align*}
\vspace{-22pt}

holds, whereby we have applied $h\cdot \iota(\Sigma_+)=\iota(\Sigma_-)$ in the third step. 
\qedhere
\end{itemize}
\endgroup
\end{proof}
\end{remark}
\noindent
Assume now that we are in the situation of Lemma \ref{sdsdsdsdffghhh}.\ref{sdsdsdsdffghhh1}: 
\begingroup
\setlength{\leftmargini}{11pt}
\begin{itemize}
\item
We choose   
$\{h_n\}_{n\in \cnN}\subseteq G$ as in Remark \ref{dfxhbghg}.\ref{dfxhbghg998ds98ds}. Then, we have
\begin{align*}
h_n=g_n\cdot g_{n+1}^{-1}\quad\text{for}\quad \cn_-\leq n\leq -1\qquad\quad\text{as well as}\qquad\quad h_n=g_n\cdot g_{n-1}^{-1}\quad\text{for}\quad 1\leq n\leq \cn_+,\hspace{25pt}
\end{align*} 
whereby Lemma  \ref{dspodsopodssdklkdss98ds98ds98ds} yields
\begin{align}
\label{safgrfgtr}
[h_n]\stackrel{\eqref{ajaahhdsadfdgdgfs}}{=}[h_n^{-1}]\qquad\forall\: n\in \cn\qquad\quad \text{hence}\qquad\quad [g_{\pm1}]=[h_{\pm1}]=[h^{-1}_{\pm1}]=[g^{-1}_{\pm1}].
\end{align}
\item  
Remark \ref{dspodslkdewkjewiuewiukjdskjds9898ds98ds98dsds} provides the following identities:
\begin{align}
\begin{split}
\label{twerfdff}
	&\hspace{-30pt}[h_{n-1}]=[h_{n}\cdot h_{n+1}\cdot h_{n}]\qquad\forall\: \cn_-< n\leq -2\qquad\quad\text{and}\qquad\quad [h_{-2}]=[g_{-1}\cdot g_{1}\cdot g_{-1}]\\
	&\hspace{-30pt}[h_{n+1}]=[h_{n}\cdot h_{n-1}\cdot h_{n}]\qquad\forall\: \hspace{7.3pt}2\leq n< \cn_+\qquad\quad\hspace{0.6pt}\text{and}\qquad\quad\hspace{6.3pt} [h_2]=[g_1\cdot g_{-1}\cdot g_1]
\end{split}
\end{align}
\item
We have the following identities:
\begin{align}
\label{awee}
\begin{split}
	[h_n]&=[g_{-1}\cdot (g_{1}\cdot g_{-1})^{|n|-1}]\qquad\quad \forall\: \cn_-\leq n\leq -1\\
	[h_n]&=[g_1\cdot (g_{-1}\cdot g_1)^{n-1}]\qquad\quad\hspace{11pt} \forall\:\hspace{7pt} 1\leq n\leq \cn_+
\end{split}
\end{align}
\begin{proof}
We first observe the following:
\begingroup
\setlength{\leftmarginii}{12pt}
\begin{itemize}
\item[$*$]   
Since $g_{\pm 1}\in \overlap(S)$ holds, Lemma \ref{fdfdfdfd} yields
\begin{align}
\label{qforrme2}
	\hspace{2pt}\:[g_{\pm 1}\cdot q\cdot g_{\pm 1}]\stackrel{\eqref{safgrfgtr}}{=}[g_{\pm1}\cdot q\cdot  g_{\pm 1}^{-1}]\stackrel{\eqref{stabiconji}}{=}[e]
	\qquad\quad\forall\: q\in G_S\subseteq \overlap(S).
\end{align}
It follows inductively from Lemma \ref{fdfdfdfd} and \eqref{qforrme2} that
\begin{align}
\label{qforrmel}
	q^\pm_{n}:=(g_{\mp 1}\cdot g_{\pm 1})^{n}\cdot (g_{\pm 1}\cdot g_{\mp1})^{n}\in G_S\qquad\quad\forall\: n\in \NN.
\end{align}
\item[$*$]
We have the implications:
\vspace{-21pt}
\begin{align}
\label{kcfdjksdfkjds}
\begin{split}
\qquad\qquad\quad \cn_-\leq -2\qquad\quad&\Longrightarrow\qquad\quad g_{1}\cdot g_{-1} \in \overlap(S)\\
\cn_+\geq \phantom{-}2\qquad\quad&\Longrightarrow\qquad\quad g_{-1}\cdot g_{1}\in \overlap(S).
\end{split}
\end{align}
{\it Proof of \eqref{kcfdjksdfkjds}.}
We only prove the second implication, because the first one follows analogously: 
\begingroup
\setlength{\leftmarginiii}{14pt}
\begin{itemize}
\item[$-$] 
According to the definition of a $\Sigma$-decomposition, 
there exists a boundary segment $\Sigma'$ of $\Sigma_0$ with   
\begin{align}
\label{fdfdfdnmnmcnmvc}
g_{-1}\cdot \iota(\Sigma')=\iota(\Sigma_{-1})\qquad\quad\text{hence}\qquad\quad	\iota(\Sigma')\stackrel{\eqref{safgrfgtr}}{=}g_{-1}\cdot\iota(\Sigma_{-1}).
\end{align}
\item[$-$] 
Since $\cn_+\geq 2$ holds, we have $g_1\cdot \iota(\Sigma_0)=\iota(\Sigma_1)$ with $g_1\in G_{z_1}$, hence $g_1\cdot \iota(z_{-1})=\iota(z_2)$. Thus,  
\vspace{-3pt}
\begin{align*}
	g_1\cdot \iota(\Sigma_0)= \iota(\Sigma_1)\quad&\stackrel{\text{Lemma } \ref{lemma:BasicAnalytt1}}{\Longrightarrow}\quad \hspace{8pt} g_1\cdot \iota|_{\Sigma_{-1}}\cpsim \iota|_{\Sigma_2} \hspace{45.5pt}\qquad\stackrel{\eqref{safgrfgtr}}{\Longrightarrow}\qquad  \iota|_{\Sigma_{-1}}\cpsim g_1\cdot\iota|_{\Sigma_2}\\
	&\hspace{8pt}\Longrightarrow\qquad  g_{-1}\cdot \iota|_{\Sigma_{-1}} \cpsim (g_{-1}\cdot g_1)\cdot \iota|_{\Sigma_2}\qquad\hspace{-1.2pt}\stackrel{\eqref{fdfdfdnmnmcnmvc}}{\Longrightarrow}\qquad\hspace{18pt} \iota\cpsim (g_{-1}\cdot g_1)\cdot \iota,
\end{align*}
which proves the claim.
\hspace*{\fill}$\ddagger$
\end{itemize}
\endgroup
\end{itemize}
\endgroup
Now, \eqref{awee} is clear for 
\begingroup
\setlength{\leftmarginii}{11pt}
\begin{itemize}
\item 
$n=\pm 1$; by $h_{\pm 1}=g_{\pm 1}$.  
\vspace{3pt}
\item   
$n=2$\: if\: $\cn_-\leq -2$\: holds, as well as for\: $n=2$\: if\: $2\leq \cn_+$ holds; by the right  side of \eqref{twerfdff}. 
\end{itemize}
\endgroup
In particular, if $\cn_+\geq 3$ holds (the case $\cn_-\leq -3$ is treated analogously), there exists some $2\leq m< n_+$ such that \eqref{awee} holds for all $1\leq n\leq m$; and we can argue by induction: 

\vspace{6pt}

\noindent
Let  $\alpha:= g_1\cdot (g_{-1}\cdot g_1)^{m-2}$  and $\beta:= g_1\cdot (g_{-1}\cdot g_1)^{m-1}$.  
By induction hypothesis, we have $h_{m-1}=\alpha\cdot q$ and  $h_m=\beta\cdot q'$ for certain $q,q'\in G_S$.  We obtain
\begin{align}
\label{dzuzufdssdufds}
\begin{split}
\:[h_{m-1}\cdot h_{m}]&\stackrel{\eqref{qwepokfdjkhfd}}{=}
[\alpha\cdot \beta]
=[(g_1\cdot q_{m-2}^+)\cdot (g_1\cdot g_{-1}\cdot g_1)]
\stackrel{\eqref{twerfdff}}{=} [(g_1\cdot q_{m-2}^+)\cdot h_2]\\
&\stackrel{\eqref{qwepokfdjkhfd}}{=}[g_1\cdot h_2]
\stackrel{\eqref{twerfdff}}{=}[g_1^2\cdot (g_{-1}\cdot g_1)]
\stackrel{\eqref{qwepokfdjkhfd},\he \eqref{qforrme2},\he \eqref{kcfdjksdfkjds}}{=}[g_{-1}\cdot g_1].
\end{split}
\end{align}
Specifically, in the last step, we have used that $g_1^2\in G_S$ holds by \eqref{qforrme2}, that $e\in \overlap(S)$ holds, and that $(g_{-1}\cdot g_1)\in \overlap(S)$ holds by \eqref{kcfdjksdfkjds}.  
We obtain
\vspace{-3pt}
\[
	[h_{m+1}]\stackrel{\eqref{twerfdff}}{=}[h_m\cdot (h_{m-1}\cdot h_m)]\stackrel{\eqref{dzuzufdssdufds}}{=}[h_m\cdot (g_{-1}\cdot g_1)]\stackrel{\eqref{qwepokfdjkhfd},\he\eqref{awee},\he \eqref{kcfdjksdfkjds}}{=}[g_1\cdot (g_{-1}\cdot g_1)^m].\qedhere
\]
\end{proof}
\end{itemize}
\endgroup
\noindent
We are ready for the proof of Lemma \ref{sdsdsdsdffghhh}.\ref{sdsdsdsdffghhh1}:
\begin{proof}[Proof of Lemma \ref{sdsdsdsdffghhh}.\ref{sdsdsdsdffghhh1}]
Clearly, \eqref{sdsdffghhh1} holds for $n=\pm 1$; so that we can argue by induction: 
\vspace{4pt}

\noindent
Assume  that \eqref{sdsdffghhh1} holds  
for $n=1,\dots,m$, for some $1\leq m < \cn_+$. We set   $\alpha:= g_1\cdot (g_{-1}\cdot g_1)^{m}$, and observe that 
$h_{m+1}= \alpha\cdot q$ holds for some $q\in G_S$  by \eqref{awee}, hence
\begin{align}
\label{lkdskldslkdslklkdslkdslkdslkdsids98d09ds09ds0909ds09dsds}
[g_{m+1}]\stackrel{\eqref{dsndsnmdsnmdsjdskjdskjdsoidsoidsoisdds98ds9898ds98dsdsdd}}{=}[h_{m+1}\cdot g_m]\stackrel{\eqref{qwepokfdjkhfd}}{=}
[\alpha\cdot g_m].
\end{align} 
\begingroup
\setlength{\leftmargini}{11pt}
\begin{itemize}
\item
If $m=2\cdot k$ is even, then $g_m\stackrel{\eqref{sdsdffghhh1}}{=}(g_1\cdot g_{-1})^k$ holds by the induction hypothesis; hence, 
\begin{align*}
	[g_{m+1}]&\stackrel{\eqref{lkdskldslkdslklkdslkdslkdslkdsids98d09ds09ds0909ds09dsds}}{=}[\alpha\cdot g_m]=
	[g_1\cdot (g_{-1}\cdot g_1)^{2k}\cdot (g_1\cdot g_{-1})^k]
	=[g_1\cdot (g_{-1}\cdot g_1)^{k}\cdot q_k^+]\stackrel{\eqref{qforrmel}}{=}[g_{\sigma(1)}\cdot {\dots}\cdot g_{\sigma(m+1)}].
\end{align*} 
\vspace{-13pt}
\item
If $m=2k+1$ is odd, then $g_m\stackrel{\eqref{sdsdffghhh1}}{=}(g_1\cdot g_{-1})^k\cdot g_1$ holds by the induction hypothesis; hence,
\begin{align*}
[g_{m+1}]&	\stackrel{\eqref{lkdskldslkdslklkdslkdslkdslkdsids98d09ds09ds0909ds09dsds}}{=}[\alpha\cdot g_m]
=
[g_1\cdot(g_{-1}\cdot g_{1})^{2k+1}\cdot (g_{1}\cdot g_{-1})^k \cdot g_1]\\
&	\stackrel{\phantom{\eqref{lkdskldslkdslklkdslkdslkdslkdsids98d09ds09ds0909ds09dsds}}}{=}[(g_1\cdot g_{-1})^{k+1}\cdot g_1\cdot q_k^+\cdot g_1]
	\stackrel{\eqref{qforrmel},\he \eqref{qforrme2}}{=}[(g_{1}\cdot g_{-1})^{k+1}]=[g_{\sigma(1)}\cdot {\dots}\cdot g_{\sigma(m+1)}].
\end{align*}
\end{itemize}
\endgroup
\noindent
It thus follows by induction that \eqref{sdsdffghhh1} holds for all $1\leq n\leq \cn_+$; and, an analogous argumentation shows that \eqref{sdsdffghhh1} also holds for all   
$\cn_-\leq n\leq -1$.\qedhere 
\end{proof}

\section*{Acknowledgements}
This work was supported in part by the Alexander von Humboldt foundation of Germany, and NSF Grants PHY-1205968 and PHY-1505490.


\begin{thebibliography}{10}

\bibitem{BackLA}
  {A.~Ashtekar, J.~Lewandowski: {\it Background Independent Quantum Gravity: A Status Report}, Class.\ Quant.\ Grav.\ {\bf 21} (2004), R53-R152.
    {\sf e-print:\ arxiv0404018v2 (gr-qc)}.} 
    
\bibitem{Gale}
  {D.~Gale: {\it The Classification of 1-Manifolds: A Take-Home Exam}, The American Mathematical Monthly {\bf Vol.\ 94, No.\ 2} (Feb., 1987), 170-175.
    } 
    
\bibitem{Deco}
  {M.~Hanusch: {\it Symmetries of Analytic Curves}, Differ.\ Geom.\ Appl.\, {\bf 74} (2021), 101687.\\  
  {\sf e-print:\ arXiv:1601.06760 (math.DG)}.}

\bibitem{InvConLQG}
{M.~Hanusch: Invariant Connections in Loop Quantum Gravity. {\it Commun. Math. Phys.} {\bf 343(1)} (2016) 1--38. 
{\sf e-print:\ 1307.5303v1 (math-ph)}.}

\bibitem{MAX}
  {M. Hanusch: Invariant Connections and Symmetry Reduction in Loop Quantum Gravity (Dissertation). University of Paderborn, December 2014. \url{http://nbn-resolving.de/urn:nbn:de:hbz:466:2-15277}\\
  {\sf e-print:\ arxiv:1601.05531 (math-ph)}.} 

\bibitem{PRIM} 
  {S.-G. Krantz, H.-R. Parks: {\it A Primer of Real Analytic Functions.} Birkh\"auser, 2002.
  }    
 
\bibitem{Polch} 
  {J. Polchinski: {\it String Theory Vol. 1,2.} Cambridge University Press, 2001.
  }  

\bibitem{Thiemann} 
  {Th. Thiemann: {\it Introduction to Modern Canonical Quantum General Relativity.} Cambridge University Press, 2008.
  }



\end{thebibliography}
\end{document}